\tikzset{anchorbase/.style={baseline={([yshift=-0.5ex]current bounding box.center)}}}
\tikzset{wipe/.style={white,line width=4pt}}
\theoremstyle{plain}% default
\newtheorem*{theorem*}{Theorem}
\newtheorem*{remark*}{Remark}
\newtheorem*{example*}{Example}
\newtheorem{lemma}{Lemma}[section]
\newtheorem{proposition}[lemma]{Proposition}
\newtheorem{corollary}[lemma]{Corollary}
\newtheorem{theorem}[lemma]{Theorem}
\newtheorem*{conjecture*}{Conjecture}
\theoremstyle{definition}
\newtheorem{definition}[lemma]{Definition}
\theoremstyle{remark}
\newtheorem{remark}[lemma]{Remark}
\def\theequation{\arabic{section}.\arabic{equation}}
\newcommand{\Tr}{\operatorname{Tr}}
\newcommand{\Dim}{\operatorname{Dim}}
\newcommand{\Hom}{\operatorname{Hom}}
\newcommand{\Z}{\mathbb{Z}}
\newcommand{\C}{\mathbb{C}}
\newcommand{\N}{\mathbb{N}}
\newcommand{\Ext}{\operatorname{Ext}}
\newcommand{\End}{\operatorname{End}}
\newcommand{\cA}{\mathcal{A}}
\newcommand{\cD}{\mathcal{D}}
\newcommand{\Vect}{\mathcal{V}ec}
\renewcommand{\k}{\Bbbk}
\def\quotient#1#2{%
    \raise1ex\hbox{$#1$}\Big/\lower1ex\hbox{$#2$}%
}
\def\T{\pi}
\def\Vdash{\vDash_s}
\def\nset#1{\{1,\dots,#1\}}
\def\Std{\operatorname{Std}}
\def\Nat{\operatorname{Nat}}
\def\Stab{\operatorname{Stab}}
\def\Theta{\mathrm{Mat}}
\def\Web{\mathcal{W}eb}
\def\Schur{\mathcal{S}chur}
\def\diag{\operatorname{diag}}
\def\Kar{\operatorname{Kar}}
\def\Tilt#1{\mathcal{T}\!ilt(#1)}
\def\TILT#1{\overline{\mathcal{T}\!ilt(#1)}}
\def\Rep#1{\mathcal{R}ep(#1)}
\def\eps{\varepsilon}
\newcommand{\arxiv}[1]{{\tt arXiv:#1}}
\def\OB{\mathcal{OB}}
\def\up{\uparrow}
\def\down{\downarrow}
\newcommand{\REP}{{\underline{\mathcal{R}e}p\,}}
\def\bi{\text{\boldmath$i$}}
\def\bj{\text{\boldmath$j$}}
\def\bk{\text{\boldmath$k$}}
\def\I{\mathrm I}
\begin{document}
 
 \title[Semisimplification of $\Tilt{GL_n}$]{\boldmath Semisimplification of the category of tilting modules for $GL_n$}
\author[J. Brundan]{Jonathan Brundan}
\address{Department of Mathematics,
University of Oregon, USA}
\email{brundan@uoregon.edu}

\author[I. Entova]{Inna Entova-Aizenbud}
\address{Department of Mathematics, Ben Gurion University, Israel}
\email{entova@bgu.ac.il}

\author[P. Etingof]{Pavel Etingof}
\address{Department of Mathematics,
Massachusetts Institute of Technology, USA}
\email{etingof@math.mit.edu}

\author[V. Ostrik]{Victor Ostrik}
\address{Department of Mathematics,
University of Oregon, USA \& Laboratory of Algebraic Geometry,
National Research University Higher School of Economics, Moscow, Russia}
\email{vostrik@uoregon.edu}

%\subjclass[2010]{17B10, 18D10}
%\thanks{2010 {\it Mathematics Subject Classification}: 17B10, 18D10.}
\thanks{
This material is based on work supported by
the National Science Foundation under Grant No.~DMS-1440140 while two of the authors (P.E. and V.O.) were in residence at the Mathematical Sciences Research Institute in Berkeley, California in Spring 2020. 
The work of J.B. was supported by NSF grant DMS-1700905.
The work of I.E. was supported by the ISF grant 711/18.
The work of P. E. was also partially supported by the NSF grant
DMS-1502244. The work of V. O. was also partially supported by the NSF grant DMS-1702251 and the Russian Academic Excellence Project 5-100.
}

\begin{abstract}
We describe the semisimplification of the monoidal category of tilting modules
for the algebraic group $GL_n$ in characteristic $p > 0$.
In particular, we compute the dimensions of
the indecomposable tilting modules modulo $p$.
\end{abstract}

 \maketitle

\section{Introduction}

Let $\k$ be an algebraically closed field of characteristic $p \geq
0$ and $G_n$ denote the algebraic group $GL_n(\k)$ for $n\geq 0$.
The symmetric tensor category
$\Rep{G_n}$ of finite-dimensional rational representations of
$G_n$ 
is a lower finite highest weight category
with 
irreducible, standard, costandard and indecomposable tilting modules
$L_n(\lambda)$, $\Delta_n(\lambda)$, $\nabla_n(\lambda)$ and $T_n(\lambda)$ 
parametrized by their highest weight $\lambda$.
In the usual coordinates, the dominant weight $\lambda$ appearing here
may be identified with an element of the poset
\begin{equation}
X_n^+ = \left\{\lambda = (\lambda_1,\dots,\lambda_n) \in \Z^n\:\big|\:\lambda_1
\geq \cdots \geq \lambda_n\right\}
\end{equation}
ordered by the usual dominance ordering $\unlhd$.
Let $\Tilt{G_n}$ be the full subcategory of $\Rep{G_n}$ consisting of
all tilting modules, which is a Karoubian
rigid symmetric monoidal category.
The defining $n$-dimensional representation $V_n$ of
$G_n$
is an indecomposable tilting module,
as are all of its (irreducible) exterior powers and their duals.
These modules generate $\Tilt{G_n}$ 
as a Karoubian monoidal category (i.e., taking tensor products,
direct sums and direct summands).

The
{\em
  semisimplification}
\begin{equation}
\TILT{G_n}
:=
\Tilt{G_n} / \mathcal N
\end{equation}
 of the category $\Tilt{G_n}$ is its quotient by the tensor 
ideal $\mathcal N$ consisting of all negligible morphisms.
This is a semisimple symmetric tensor category with irreducible objects 
arising from the indecomposable
tilting modules whose dimension is non-zero modulo $p$; see
\cite{EO} for further discussion and historical remarks.
Of course, if $p=0$ the category $\Rep{G_n}$ is already semisimple so
coincides with the semisimplification
$\TILT{G_n}$, and the irreducible objects in $\TILT{G_n}$
are labeled by the set
$X_{n,0}^+ := X_n^+$ of all dominant weights.
The case $p \geq n$ may also be regarded as classical: in this case, the
category
$\TILT{G_n}$ is the so-called {\em Verlinde category}, with
irreducible objects 
arising from the indecomposable tilting
modules of highest weight belonging to the set
\begin{equation}\label{lowest}
X_{n,p}^+ := \{\lambda =
(\lambda_1,\dots,\lambda_n) \in X_n^+\:|\:\lambda_1-\lambda_n <
p-n+1\},
\end{equation}
interpreting $X_{0,p}^+$ as $\{\varnothing\}$.
The classical proof of this from \cite{GK, GM} goes as follows.
As $X_{n,p}^+$ is the fundamental alcove, the linkage
principle implies that
$T_n(\lambda)=\Delta_n(\lambda)$ for $\lambda$ in
the upper closure $\overline{X}_{n,p}^+$ (defined by
replacing $<$ in (\ref{lowest}) by $\leq$).
By the Weyl dimension formula, it follows that $T_n(\lambda)$ is of
non-zero dimension modulo $p$ for $\lambda \in X_{n,p}^+$, and
its identity morphism is negligible for $\lambda \in \overline{X}_{n,p}^+ \setminus
X_{n,p}^+$.
Then an argument with translation functors gives that the identity
morphism of $T_n(\lambda)$
is negligible for any $\lambda \in X_n^+ \setminus X_{n,p}^+$, hence,
these modules are all of dimension zero modulo $p$.

In this article, we treat the remaining situations when $0 < p < n$.
Note that the case $p=2$ was worked out already in \cite[$\S$8]{EO}.
To formulate the main result in general, assume that $n, p > 0$ and let
\begin{equation}\label{padic}
n = n_0 + n_1 p + \cdots + n_r p^r
\end{equation}
be the $p$-adic decomposition of $n$, so $0 \leq
n_0,\dots,n_{r-1} < p$ and $0 < n_r < p$.
We define an embedding
\begin{equation}\label{interests}
\imath: X^+_{n_0}\times X^+_{n_1}\times\cdots\times X^+_{n_r} \hookrightarrow 
X^+_n
\end{equation}
sending $\underline{\lambda} =
\left(\lambda^{(0)},\dots,\lambda^{(r)}\right)$ to the dominant conjugate of
the $n$-tuple that is the concatenation
$\lambda^{(0)} \sqcup \underbrace{\lambda^{(1)}\sqcup\cdots\sqcup\lambda^{(1)}}_{\text{$p$
    copies}}\sqcup
\underbrace{\lambda^{(2)}\sqcup\cdots\sqcup\lambda^{(2)}}_{\text{$p^2$
    copies}}\sqcup
\cdots\sqcup
\underbrace{\lambda^{(r)}\sqcup\cdots\sqcup\lambda^{(r)}}_{\text{$p^r$
    copies}}.
$
Let
\begin{equation}\label{off}
X_{n,p}^+ := \imath\left(X_{n_0,p}^+\times\cdots\times X_{n_r,p}^+\right) \subset
X_n^+.
\end{equation}
See (\ref{shower})--(\ref{soap}) below
for a more conceptual description of this set.
Also let $\boxtimes$ be the Deligne tensor product of tensor categories 
(e.g., see \cite[$\S$4.6]{EGNO}). The Deligne tensor product of
semisimple symmetric tensor categories is
again a semisimple symmetric tensor category.

\vspace{2mm}
\noindent
{\bf Main Theorem.} {\em
For $p > 0$ as above, there is a symmetric monoidal equivalence
$$
\Xi_n: \TILT{G_{n_0}} \boxtimes\cdots\boxtimes
\TILT{G_{n_r}}
\rightarrow \TILT{G_n}
$$
sending 
$T_{n_0}(\lambda^{(0)}) \boxtimes\cdots \boxtimes
T_{n_r}(\lambda^{(r)})$ for
$\underline{\lambda} = (\lambda^{(0)},\dots,\lambda^{(r)})
 \in X_{n_0,p}^+\times\cdots \times X_{n_r,p}^+$
to $T_n(\imath(\underline{\lambda}))$.
In particular, the irreducible objects of $\TILT{G_n}$
are the indecomposable tilting modules with highest weight 
in $X_{n,p}^+$.
}
\vspace{2mm}

\noindent
{\bf Example.}
If $p=5$ and $n=13 = 3 + 2\cdot 5$,
this implies that $\TILT{G_{13}}$ is equivalent to
$\TILT{G_3} \boxtimes \TILT{G_2}$. 
The bijection 
$\imath:X_{3,5}^+\times X_{2,5}^+\rightarrow X_{13,5}^+$
 between the labeling sets
takes $\underline{\lambda}=(\lambda^{(0)}, \lambda^{(1)}) \in
X_3^+\times X_2^+$
with
$\lambda^{(0)}_1 - \lambda^{(0)}_3 < 3$
and $\lambda^{(1)}_1-\lambda^{(1)}_2 < 4$
to 
$$
\imath(\underline{\lambda}) =
\left(\lambda^{(0)}_1,\lambda^{(0)}_2,\lambda^{(0)}_3,
\lambda^{(1)}_1, 
\lambda^{(1)}_2, 
\lambda^{(1)}_1, 
\lambda^{(1)}_2, 
\lambda^{(1)}_1, 
\lambda^{(1)}_2, 
\lambda^{(1)}_1, 
\lambda^{(1)}_2, 
\lambda^{(1)}_1, 
\lambda^{(1)}_2
\right)^+ \in X_{13}^+
$$
where $+$ denotes dominant conjugate.
So
$\Xi_{13}(V_3 \boxtimes \k) \cong V_{13}$,
$\Xi_{13}(\k \boxtimes V_2) \cong \bigwedge^5 V_{13}$
and
$\Xi_{13}(V_3 \boxtimes V_2) \cong \bigwedge^6 V_{13}
\cong V_{13} \otimes \bigwedge^5 V_{13}$ (isomorphisms in $\TILT{G_{13}}$).

\vspace{2mm}
\noindent
{\bf Corollary.} {\em
If $\lambda \in X_n^+\setminus X_{n,p}^+$
then $\dim T_n(\lambda) \equiv 0 \pmod{p}$.
 If $\lambda \in X_{n,p}^+$, so that
$\lambda = \imath(\underline{\lambda})$
for 
$\underline{\lambda} = (\lambda^{(0)},\dots,\lambda^{(r)})
 \in X_{n_0,p}^+\times\cdots \times X_{n_r,p}^+$,
then
we have that
$$
\dim T_n(\lambda) \equiv \prod_{i=0}^r \dim \Delta_{n_i}(\lambda^{(i)})\pmod{p}.
$$
The right hand side here may be computed explicitly using the Weyl dimension formula.
}

\begin{proof}
For each $i=0,\dots,r$, we have that $p > n_i$, so by the
classical description of Verlinde categories we have that
$\dim T_{n_i}(\lambda^{(i)}) \equiv \dim \Delta_{n_i}(\lambda^{(i)})\pmod{p}$
for $\lambda^{(i)} \in X_{n_i}^+$.
Now the corollary
follows from the theorem
since symmetric monoidal functors are trace-preserving, hence, 
they also
respect categorical dimensions.
\end{proof}

The Main Theorem gives rise to a categorification of Lucas' theorem in the
following sense.
If $k = k_0+k_1p+\cdots+k_r p^{r}$ for $0 \leq k_0,\dots,k_r <
p$, then
$\bigwedge^k V_n \in \TILT{G_n}$ is the image of
the irreducible object $\bigwedge^{k_0} V_{n_0}\boxtimes\cdots\boxtimes\bigwedge^{k_r}
V_{n_r}\in\TILT{G_{n_0}} \boxtimes\cdots\boxtimes
\TILT{G_{n_r}}$
under the equivalence $\Xi_n$ from the theorem.
We deduce on taking categorical dimensions that
\begin{equation}\label{classicallucas}
\binom{n}{k}
\equiv \prod_{i=0}^r \binom{n_i}{k_i}\pmod{p},
\end{equation}
which is exactly the {\em classical Lucas theorem}.

An essential step in the proof is provided by a theorem of Donkin from
\cite{D1},
which gives a version of skew Howe duality for the general linear group.
In fact, we rephrase Donkin's result in terms of 
what we call the {\em Schur category}; see Theorem~\ref{dthm} for the
statement.
The Schur category is a strict monoidal category
closely related to the classical Schur algebra; see Definition~\ref{scat}.
It also has an explicit diagrammatic realization 
in terms of webs, which is due to Cautis, Kamnitzer and Morrison
\cite{CKM}. 
Since we are working in positive characteristic, we
have included a self-contained treatment establishing the connection
between the Schur category and webs via an approach which
is independent of \cite{CKM}; see Theorem~\ref{ckmrevised}.

The Main Theorem reduces the study of $\TILT{G_n}$ for all $p \geq 0$
to the classical cases in which $p=0$ or $p > n$.
In these classical cases, it can be helpful to think about the
combinatorial 
structure of $\TILT{G_n}$ from the perspective of categorification.
Let $\mathfrak{s}$ be the affine Kac-Moody algebra
$\mathfrak{sl}_\infty$ if $p=0$ or
$\widehat{\mathfrak{sl}}_p$ if $p > n$, with fundamental weights
$\Lambda_i$ and simple coroots $h_i$ for $i \in \Z / p \Z$.
There is a well-known categorical action making
$\Rep{G_n}$
into a 2-representation
of the Kac-Moody 2-category 
$\mathfrak{U}(\mathfrak{s})$.
(The quickest way to construct this is to apply \cite[Theorem 4.11]{BSW},
starting from the action of the degenerate 
Heisenberg category of central charge zero
under which $\uparrow$ acts by tensoring with $V_n$ and $\downarrow$
acts by tensoring with $V_n^*$, as is discussed in the introduction of \cite{BSW}.)
This categorical action restricts to give an action of
$\mathfrak{U}(\mathfrak{s})$ on $\Tilt{G_n}$
such that
\begin{equation}\label{iso1}
\C \otimes_{\Z} K_0(\Tilt{G_n})\cong {\textstyle\bigwedge^n \Nat_p}
\end{equation}
as an $\mathfrak{s}$-module,
where $\Nat_p$ is a natural level zero representation of
$\mathfrak{s}$ with basis $(m_i)_{i \in \Z}$
such that $m_i$ is of weight $\Lambda_{i-1}-\Lambda_i$; 
see the discussion in the introduction of \cite{HOMFLY}, or \cite[Proposition 6.5]{RW}.
In particular, $\C \otimes_{\Z} K_0(\Tilt{G_n})$ is generated as an
$\mathfrak{s}$-module by the class $[\k]$ of the trivial module, which
corresponds under (\ref{iso1}) to the vector $m_0 \wedge m_{-1} \wedge\cdots\wedge
m_{1-n}\in  {\textstyle\bigwedge^n \Nat_p}$ of
weight $\Lambda_{-n} - \Lambda_0$.
The ideal $\mathcal{N}$ of negligible morphisms defines
a sub-2-representation, hence, the quotient $\TILT{G_n}$ is a 2-representation as
well. Its complexified Grothendieck ring satisfies
\begin{equation}\label{iso2}
\C \otimes_{\Z} K_0(\TILT{G_n})\cong V(\Lambda_{-n}-\Lambda_0),
\end{equation}
i.e., it is the level zero extremal weight module parametrized by the
minuscule weight $\Lambda_{-n}-\Lambda_0$ in the sense of \cite{Kas}.
This follows because, as an $\mathfrak{s}$-module,
$\C \otimes_{\Z} K_0(\TILT{G_n})$
is generated by a vector of weight $\Lambda_{-n}-\Lambda_0$, and 
it is minuscule as all of its weights $\lambda$ satisfy $\langle
h_i, \lambda \rangle \in \{0,1,-1\}$ for all $i \in \Z / p\Z$. The latter assertion follows 
from the semisimplicity of
the category $\TILT{G_n}$ by invoking some of the general structure
theory of Kac-Moody 2-representations. In more detail, semisimplicity
implies that the representation-theoretic Kashiwara operators $\eps_i,
\phi_i$ as defined e.g. in 
\cite[$\S$5.1]{BSW} satisfy $\eps_i(L), \phi_i(L) \leq 1$ for all
irreducible objects $L \in \TILT{G_n}$ and all $i \in \Z / p\Z$.
Since the weight $\lambda$ of the class of $L$ in $\C \otimes_{\Z}
K_0(\TILT{G_n})$ satisfies $\langle h_i, \lambda \rangle = \phi_i(L) - \eps_i(L)$ by
\cite[Lemma 5.2]{BSW}, this 
implies that $\langle
h_i, \lambda \rangle \in \{0,1,-1\}$ for all $i$.

We remark finally that there is also a generalization of our Main
Theorem to the quantum general linear group $G_{n,q}$ for any
$q \in \k^\times$ such that $q^2$ is a primitive $\ell$th root of
unity. It is related to the {\em quantum Lucas theorem}.
The proof in the quantum case is quite similar,
using Donkin's skew Howe duality established in \cite{D2} formulated
in terms of the {\em $q$-Schur category}, 
which again can be viewed
diagrammatically in terms of the webs of \cite{CKM}.
This will be developed in a subsequent paper.

\vspace{2mm}
\noindent
{\em Conventions.}
%We denote the set $\{1,2,\dots,n\}$ by $\nset{n}$.
 All categories will be $\k$-linear with finite-dimensional
$\Hom$-spaces, and all functors will be $\k$-linear.
A category is {\em Karoubian} if it is additive and idempotent
complete. 
Functors
between Karoubian categories are automatically additive due to the
assumption that they are $\k$-linear, 

\vspace{2mm}
\noindent
{\em Acknowledgements.} 
The first author would like to thank Travis Scrimshaw for suggesting
the connection to extremal weight crystals, and Ben Elias
for many helpful discussions about web categories.

\section{Background about semisimplification}\label{gensec}

In this section, we give a self-contained treatment of some basic facts
about semisimplification which will be needed later. 
The results here are all well known and first appeared in \cite{BW}
(see also \cite[$\S$6]{Del} and \cite{AK02}).
We work in the setting of symmetric monoidal categories for
simplicity, but the arguments are quite general. For further
discussion of the extension to
pivotal categories, see \cite[$\S$2.3]{EO}.

Following our general conventions, all monoidal categories
will be $\k$-linear, meaning in particular that the
tensor product functor 
$-\otimes-$ is bilinear, with finite-dimensional $\Hom$-spaces. 
%A {\em symmetric monoidal functor} between symmetric monoidal
%categories is a monoidal functor ($\k$-linear, of course) which respects the symmetric monoidal structure in the sense of \cite[2.7]{D1tan} 
%(there it is called "foncteur ACU").
A {\em tensor category} means a 
monoidal
category which is rigid and Abelian, with all objects having
finite length, and satisfying $\End(\mathbbm{1}) = \k$.
Note that in such a category the functor $-\otimes-$ is biexact.
See \cite[Ch. 4]{EGNO} for a detailed treatment. 

Let $\cD$ be a rigid symmetric monoidal category with
$\End_{\cD}(\mathbbm{1}) = \k$.
By the {\em trace} $\Tr(f)$ of a morphism $f:X \rightarrow X$, we mean
the scalar in $\k$ 
defined by the composition 
$$
\mathbbm{1} \stackrel{\operatorname{coev}_X}{\longrightarrow} X \otimes X^*
\stackrel{f \otimes \operatorname{id}_{X^*}}{\longrightarrow} X \otimes
X^* 
\stackrel{s_{X,X^*}}{\longrightarrow} X^* \otimes X \stackrel{\operatorname{ev}_X}{\longrightarrow} \mathbbm{1},
$$
where $\operatorname{coev}_X$ and $\operatorname{ev}_X$ are the
evaluation
and coevaluation morphisms for the dual $X^*$ of $X$, and $s_{X,Y}:X \otimes Y
\stackrel{\sim}{\rightarrow} Y \otimes X$ is the symmetric braiding.
Then the {\em categorical dimension} $\Dim X$ means
$\Tr(\operatorname{id}_X)$.
Note that symmetric monoidal functors between categories of this sort
preserve trace, hence also
categorical dimensions.
The category $\cD = \Tilt{G_n}$ considered later in the paper
admits a symmetric monoidal 
functor to vector spaces (``fiber functor''),
so for $V \in \Tilt{G_n}$ the 
categorical dimension $\Dim V$ coincides with the image in $\k$ of the usual
dimension $\dim V$ of the underlying vector space.

A category $\cA$ is {\it semisimple} if it is Abelian and
every object is isomorphic to a finite direct sum of irreducible objects.
In a semisimple category, every short exact sequence splits.
The following lemma is taken from \cite[Section 2.1]{Mueger}.

\begin{lemma}\label{Mue}
Let $\cA$ be a $\k$-linear category with finite-dimensional
$\Hom$-spaces.
Then $\cA$ is semisimple if and only if it is Karoubian,
there exists a family $(L_i)_{i \in I}$ of objects such that
$\dim \Hom_{\cA}(L_i, L_j) = \delta_{i,j}$ for all $i,j \in I$, and moreover
any object of $\cA$ is isomorphic to a finite direct sum of objects $L_i\:(i \in I)$.
\end{lemma}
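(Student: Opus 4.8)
The plan is to prove the two implications separately; the substance is in the ``if'' direction, the ``only if'' direction being essentially Schur's lemma.

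\emph{``Only if''.} Suppose $\cA$ is semisimple. Every Abelian category is idempotent complete, since an idempotent $e\colon X\to X$ splits via $X\cong\ker(e)\oplus\ker(\id_X-e)$; so $\cA$ is Karoubian. Take $(L_i)_{i\in I}$ to be a set of representatives for the isomorphism classes of irreducible objects. If $i\ne j$, a nonzero morphism $L_i\to L_j$ would be a monomorphism (its kernel is a proper subobject of the simple object $L_i$, hence $0$) and an epimorphism (its image is a nonzero subobject of $L_j$, hence $L_j$), hence an isomorphism, contradicting $L_i\not\cong L_j$; so $\Hom_\cA(L_i,L_j)=0$. The same argument makes every nonzero endomorphism of $L_i$ invertible, so $\End_\cA(L_i)$ is a finite-dimensional division $\k$-algebra; as $\k$ is algebraically closed, any $f$ in it has an eigenvalue $\lambda\in\k$, and then $f-\lambda\,\id_{L_i}$ is noninvertible, hence $0$, so $\End_\cA(L_i)=\k$. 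Thus $\dim\Hom_\cA(L_i,L_j)=\delta_{i,j}$, and the third condition is just the definition of semisimplicity.

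\emph{``If''.} Assume the three conditions. Since $\dim\End_\cA(L_i)=1$, the unital algebra $\End_\cA(L_i)$ equals $\k\,\id_{L_i}$; in particular $\id_{L_i}\ne 0$, so $L_i\not\cong 0$, and (as $\Hom_\cA(L_i,L_j)=0$ for $i\ne j$) the $L_i$ are pairwise non-isomorphic. Let $\Vect$ be the category of finite-dimensional $\k$-vector spaces, and let $\Vect^{(I)}$ be the category whose objects are families $(V_i)_{i\in I}$ in $\Vect$ with $V_i=0$ for almost all $i$, with componentwise morphisms; this is a semisimple Abelian category whose irreducible objects are the $S_j$ having $\k$ in slot $j$ and $0$ elsewhere. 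I would then define
$$
F\colon\Vect^{(I)}\longrightarrow\cA,\qquad (V_i)_{i\in I}\longmapsto\bigoplus_{i\in I}L_i\otimes_\k V_i,
$$
using that $\cA$ is Karoubian, hence additive, so that the finite direct sums exist, and writing $L_i\otimes_\k V_i$ for a direct sum of $\dim V_i$ copies of $L_i$. The hom-set identity $\dim\Hom_\cA(L_i,L_j)=\delta_{i,j}$ (that is, vanishing of cross terms and $\End_\cA(L_i)=\k$) identifies $\Hom_\cA(F(V_\bullet),F(W_\bullet))$ with $\bigoplus_i\Hom_\k(V_i,W_i)$ compatibly with composition and units, so $F$ is fully faithful; and the third hypothesis says precisely that $F$ is essentially surjective. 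Hence $F$ is an equivalence, and $\cA$, being equivalent to the semisimple Abelian category $\Vect^{(I)}$, is itself semisimple.

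I do not expect a genuine obstacle here; the one thing to watch is the bookkeeping of which hypothesis enters where. ``Karoubian'' (really only a zero object together with finite biproducts) is what makes $F$ well defined and makes ``essential image'' the right notion; the orthonormality of the $L_i$ is exactly full faithfulness of $F$; and algebraic closedness of $\k$ is used only in the ``only if'' direction, to sharpen Schur's lemma to $\End_\cA(L_i)=\k$. Nothing beyond linear algebra over $\k$ is involved.
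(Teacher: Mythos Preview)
Your proof is correct. The paper does not actually prove this lemma; it simply cites \cite[Section 2.1]{Mueger}, so there is no in-paper argument to compare against. Your approach---reducing the ``if'' direction to exhibiting an equivalence with the semisimple category $\Vect^{(I)}$, with full faithfulness coming from the orthonormality condition and essential surjectivity from the decomposition hypothesis---is the standard one and is in the same spirit as what appears in M\"uger's paper.
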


\begin{remark}
 The last condition in Lemma~\ref{Mue} may be replaced by the following: for all $U, V
 \in \cA$ the map 
 $$\bigoplus_{i\in I} \Hom_{\cA}(U, L_i) \otimes_{\k} \Hom_{\cA}( L_i, V) \longrightarrow \Hom_{\cA}(U, V)$$ given by composition is an isomorphism.
\end{remark}

\begin{definition}\label{maindef}
Let $\cD$ be a Karoubian rigid symmetric monoidal category satisfying $\End_{\cD}(\mathbbm{1})=\k$.
For any $X, Y \in \cD$, we let $$\mathcal{N}(X,Y) := \left\{ f: X \to
Y \:\big|\: 
\Tr(g\circ f) = 0\text{ for all }g: Y\to X 
\right\}$$
and denote by $\mathcal{N}$ the corresponding collection of
$\mathcal{N}(X,Y)$ over all $X, Y \in \cD$. Then $\mathcal{N}$ is a
tensor ideal (see e.g. \cite[Lemma 2.3]{EO}), called the {\it tensor ideal of negligible morphisms} in
$\cD$.
We define the {\em semisimplification} of $\cD$ to be the quotient category
\begin{equation*}
\overline{\cD} = {\cD}/{\mathcal{N}},
\end{equation*}
letting $Q:\cD \to \overline{\cD}$ be the canonical quotient functor. 
In particular, this means that the object set of $\overline{\cD}$ is the same as for $\cD$, i.e., $QX
= X$ for all $X \in \cD$, although of course non-isomorphic objects of $\cD$
may be isomorphic in $\overline{\cD}$.
\end{definition}

The category $\overline{\cD}$ in Definition~\ref{maindef}
is again a Karoubian rigid symmetric
monoidal category with $\End_{\overline{\cD}}(\mathbbm{1})=\k$
(see e.g. \cite[$\S$6]{Del}).
Also the quotient functor $Q$ is a full symmetric monoidal functor.

\begin{lemma}\label{didnt}
Let $\cD$ be as in Definition~\ref{maindef}, and assume moreover that
all nilpotent endomorphisms in $\cD$ have trace zero.
Let $X \in \cD$ be an indecomposable object
with endomorphism algebra $E := \End_{\cD}(X)$, and $J := J(E)$ be the Jacobson radical.
\begin{enumerate}
\item If $\Dim X \neq 0$ then 
$\mathcal{N}(X,X) = J$, hence, $\dim \End_{\overline{\cD}}(X) = 1$.\label{i1}
\item If $\Dim X = 0$ then 
$\mathcal{N}(X,X) = E$, hence, $\dim \End_{\overline{\cD}}(X) = 0$.\label{i2}
\item Given another indecomposable object $Y \not\cong X$,
all morphisms $X\rightarrow Y$ are negligible,
hence,
$\dim \Hom_{\overline{\cD}}(X,Y) = 0$.\label{i3}
\end{enumerate}
\end{lemma}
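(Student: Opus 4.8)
The plan is to exploit the structure theory of the finite-dimensional algebra $E = \End_\cD(X)$ together with the trace form $\tau \colon E \to \k$, $\tau(f) := \Tr(f)$. Since $X$ is indecomposable, $E$ is a local algebra, so $E/J$ is a division algebra; because $\Hom$-spaces are finite-dimensional over the algebraically closed field $\k$ (by our standing conventions) we in fact have $E/J = \k$, and every element of $J$ is nilpotent. The key observation is that $\mathcal{N}(X,X)$ is precisely the radical of the bilinear form $(f,g) \mapsto \tau(g \circ f)$ on $E$, so the whole lemma reduces to understanding this form. First I would record that $J \subseteq \mathcal{N}(X,X)$: for $f \in J$ and any $g \in E$ we have $g \circ f \in J$, hence $g \circ f$ is nilpotent, hence $\tau(g\circ f) = 0$ by the standing hypothesis that nilpotent endomorphisms have trace zero. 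Since $J$ has codimension $1$ in $E$, it follows that $\mathcal{N}(X,X)$ is either $J$ or all of $E$, and the two cases of parts \eqref{i1} and \eqref{i2} are distinguished by whether $\tau$ vanishes on a lift of $1 \in E/J$, i.e.\ on $\id_X$ itself.

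For \eqref{i1}, suppose $\Dim X = \Tr(\id_X) \neq 0$. Then $\id_X \notin \mathcal{N}(X,X)$ (take $g = \id_X$ in the defining condition), so $\mathcal{N}(X,X) \neq E$, forcing $\mathcal{N}(X,X) = J$ by the codimension-$1$ remark above. Consequently $\End_{\overline\cD}(X) = E/\mathcal{N}(X,X) = E/J = \k$ has dimension $1$. For \eqref{i2}, suppose $\Dim X = 0$. I would write an arbitrary $f \in E$ as $f = c\cdot\id_X + j$ with $c \in \k$ and $j \in J$, using $E = \k\cdot\id_X \oplus J$. Then for any $g \in E$, also $g = c'\cdot\id_X + j'$, and $g \circ f = cc'\cdot \id_X + (\text{element of }J)$; applying $\tau$ and using $\Tr(\id_X) = 0$ together with vanishing of trace on $J$ gives $\tau(g \circ f) = 0$. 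Hence $\mathcal{N}(X,X) = E$ and $\End_{\overline\cD}(X) = 0$.

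For \eqref{i3}, let $Y \not\cong X$ be indecomposable and take any $f \colon X \to Y$ and any $g \colon Y \to X$. Then $g \circ f \in \End_\cD(X)$ and $f \circ g \in \End_\cD(Y)$ are not isomorphisms: if $g \circ f$ were invertible then $f$ would be a split monomorphism, making $X$ a direct summand of the indecomposable $Y$, hence $X \cong Y$, a contradiction; similarly for $f \circ g$. Since $\End_\cD(X)$ is local, a non-invertible endomorphism lies in its Jacobson radical $J(\End_\cD(X))$, so $g \circ f \in J(\End_\cD(X))$ is nilpotent, whence $\Tr(g \circ f) = 0$ by hypothesis. As $g$ was arbitrary, $f \in \mathcal{N}(X,Y)$, and therefore $\Hom_{\overline\cD}(X,Y) = \Hom_\cD(X,Y)/\mathcal{N}(X,Y) = 0$.

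The only mildly delicate point is the identification $E/J = \k$ (as opposed to a larger division algebra), which is where algebraic closedness of $\k$ and finite-dimensionality of the $\Hom$-spaces enter; everything else is a formal consequence of locality of $\End_\cD$ of an indecomposable and of the hypothesis that nilpotents are traceless. I do not expect any serious obstacle.
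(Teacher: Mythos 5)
Your proof is correct and takes essentially the same approach as the paper: use that $E$ is finite-dimensional local over an algebraically closed field so $E = \k\cdot\id_X \oplus J$ with $J$ consisting of nilpotents, observe $J \subseteq \mathcal{N}(X,X)$ since $J$ is an ideal of traceless elements, and distinguish the two cases by whether $\Tr(\id_X)$ vanishes; part (3) likewise proceeds by noting $g\circ f$ cannot be invertible. The only cosmetic difference is that in (2) you expand $g\circ f$ in terms of the decomposition rather than observing directly that every element of $E$ is traceless (which suffices, since $\mathcal{N}(X,X) = E$ iff $\Tr$ vanishes on all of $E$).
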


\begin{proof}
Since $E$ is finite-dimensional and local over an algebraically closed
field, its Jacobson radical is of
codimension one.
The assumption on $\cD$ implies that all elements of $J$ are of
trace
zero. Since $J$ is an ideal, we deduce that $J \leq
\mathcal{N}(X,X) \leq E$.

\vspace{1mm}
\noindent (\ref{i1})
As $\Dim X \neq 0$, the identity endomorphism $1_E$ of $X$ is
not negligible. Hence, 
$\mathcal{N}(X,X) \neq E$, so we must have that $\mathcal{N}(X,X) =
J$.

\vspace{1mm}
\noindent(\ref{i2})
We must show that $\Tr(f) = 0$ for all $f \in E$.
To see this, write $f$ as $\lambda 1_E + h$
for $\lambda \in \k$ and $h \in J$.
Then $\Tr(f) = 
\Tr(\lambda 1_E + h) = \lambda \Dim X + \Tr(h) = 0$.

\vspace{1mm}
\noindent(\ref{i3})
We must show 
that $\Tr(g \circ f)  = 0$
for any morphisms $f:X \rightarrow Y$ and $g:Y \rightarrow X$.
Note that $g \circ f$ is not an isomorphism, since otherwise
$f$ would be a split embedding of $X$ into $Y$
with left inverse $(g \circ f)^{-1} \circ g$, contradicting the
assumption that $X$ and $Y$ are indecomposable 
with $X \not\cong Y$.
Hence, $g \circ f \in J$, which we have already observed is contained
in $\mathcal N(X,X)$.
\end{proof}

\begin{theorem}\label{jonsversion}
For $\cD$ as in Definition~\ref{maindef},
the following conditions are equivalent:
 \begin{enumerate}\label{itms:cond_ss}
  \item\label{itm1:cond_ss} $\overline{\cD}$ is a semisimple symmetric
    tensor category;
 \item\label{itm3:cond_ss} there exists a symmetric monoidal functor
   from $\cD$ to a symmetric tensor category;
 \item\label{itm2:cond_ss} all nilpotent endomorphisms in $\cD$ have
   trace zero.
\end{enumerate}
When these conditions hold, 
the irreducible objects in $\overline{\cD}$ are the
indecomposable objects of $\cD$ of non-zero dimension, two such
objects being isomorphic in $\overline{\cD}$ if and only if they are
isomorphic in $\cD$.
\end{theorem}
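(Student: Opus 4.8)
The plan is to prove the cycle of implications $(3)\Rightarrow(1)\Rightarrow(2)\Rightarrow(3)$, with the description of the irreducible objects of $\overline{\cD}$ falling out of the first step. The implication $(1)\Rightarrow(2)$ is essentially free: if $\overline{\cD}$ is a semisimple symmetric tensor category, then the canonical quotient functor $Q:\cD\to\overline{\cD}$, which is a full symmetric monoidal functor by the discussion following Definition~\ref{maindef}, already witnesses condition $(2)$, since a semisimple symmetric tensor category is in particular a symmetric tensor category.

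For $(3)\Rightarrow(1)$ --- the main step --- I would feed condition $(3)$, which is precisely the standing hypothesis of Lemma~\ref{didnt}, into that lemma. Thus for every indecomposable $X\in\cD$ we get $\dim\End_{\overline{\cD}}(X)=1$ when $\Dim X\neq 0$ and $X\cong 0$ in $\overline{\cD}$ when $\Dim X=0$, and $\Hom_{\overline{\cD}}(X,Y)=0$ for indecomposable $X\not\cong Y$. Next I would invoke the Krull--Schmidt theorem in $\cD$, which holds because $\cD$ is Karoubian with finite-dimensional $\Hom$-spaces (so that endomorphism rings of indecomposables are local): every object of $\cD$, hence of $\overline{\cD}$, is a finite direct sum of indecomposables. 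Letting $(L_i)_{i\in I}$ be representatives of the isomorphism classes of indecomposable objects of $\cD$ of nonzero dimension, the facts above give $\dim\Hom_{\overline{\cD}}(L_i,L_j)=\delta_{i,j}$ together with the statement that every object of $\overline{\cD}$ is a finite direct sum of the $L_i$. Since $\overline{\cD}$ is itself Karoubian (as recorded after Definition~\ref{maindef}), Lemma~\ref{Mue} now shows that $\overline{\cD}$ is semisimple, so in particular Abelian with all objects of finite length; combined with the facts, also recorded after Definition~\ref{maindef}, that $\overline{\cD}$ is rigid symmetric monoidal with $\End_{\overline{\cD}}(\mathbbm{1})=\k$, this is exactly the assertion that $\overline{\cD}$ is a semisimple symmetric tensor category. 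The same bookkeeping yields the final sentence of the theorem: the $L_i$ are precisely the irreducible objects of $\overline{\cD}$, i.e., the images of the indecomposable objects of $\cD$ of nonzero dimension, and two of them are isomorphic in $\overline{\cD}$ if and only if they are isomorphic in $\cD$ (if $X\not\cong Y$ are indecomposable in $\cD$ then $\Hom_{\overline{\cD}}(X,Y)=0$).

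It remains to treat $(2)\Rightarrow(3)$. Given a symmetric monoidal functor $F:\cD\to\cT$ into a symmetric tensor category $\cT$ and a nilpotent endomorphism $f:X\to X$ in $\cD$, I would first observe that $F$ preserves traces: being $\k$-linear and monoidal, $F$ induces the identity map on $\End(\mathbbm{1})=\k$, and being symmetric it intertwines the braidings and preserves the evaluation and coevaluation morphisms, so $\Tr(f)=\Tr(F(f))$ as elements of $\k$. Now $F(f)$ is a nilpotent endomorphism in the tensor category $\cT$, and I would finish by recalling the standard fact that any nilpotent endomorphism $g$ of an object of a tensor category has $\Tr(g)=0$: applying additivity of the trace over short exact sequences of objects equipped with compatible endomorphisms (a consequence of rigidity and biexactness of $\otimes$; see \cite[Ch.~4]{EGNO}) to the filtration of the object by the images of the powers of $g$, on whose successive subquotients $g$ acts as zero, one obtains $\Tr(g)=0$. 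Hence $\Tr(f)=0$, which is $(3)$.

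The only ingredient here not already set up in the preceding part of the paper is the additivity of the trace over short exact sequences invoked in $(2)\Rightarrow(3)$; I expect that to be the only step requiring genuine care, though it is entirely classical. Everything else is an assembly of Lemma~\ref{didnt}, Lemma~\ref{Mue} and the Krull--Schmidt theorem.
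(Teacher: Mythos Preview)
Your proof is correct and follows essentially the same approach as the paper: the same cycle of implications, with $(3)\Rightarrow(1)$ handled via Krull--Schmidt, Lemma~\ref{didnt}, and Lemma~\ref{Mue}, and $(1)\Rightarrow(2)$ via the quotient functor $Q$. The only difference is that for $(2)\Rightarrow(3)$ the paper simply cites \cite[\S 6]{Del} for the fact that nilpotent endomorphisms in a tensor category have trace zero, whereas you sketch the argument via additivity of trace over the filtration by images of powers.
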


\begin{proof}
The implication  {\bf \eqref{itm1:cond_ss} $\Rightarrow$
  \eqref{itm3:cond_ss}} follows because $Q:\cD \rightarrow
\overline{\cD}$ is such a functor.
The implication
 {\bf \eqref{itm3:cond_ss} $\Rightarrow$ \eqref{itm2:cond_ss}} follows
 from the fact that in a tensor category, any nilpotent endomorphism
 has trace zero (see \cite[$\S$6]{Del}).
For the remainder of the proof, we assume \eqref{itm2:cond_ss} and must
prove \eqref{itm1:cond_ss} together with the final assertion. 
 
The category $\cD$ is Krull-Schmidt. In
particular, any object is a finite direct sum of indecomposable objects.
This follows from the finite-dimensionality of the endomorphism
algebras $\End_{\cD}(X)$ for all $X \in \cD$.
In view of Lemma~\ref{didnt}(2), indecomposable objects of $\cD$ with
categorical dimension zero become zero objects in $\overline{\cD}$.
Thus, if we let $(L_i)_{i \in I}$ be a system of representatives for 
the isomorphism classes of indecomposable objects of non-zero
categorical dimension in $\cD$, we deduce that every object of
$\overline{\cD}$
is isomorphic to a finite direct sum of $L_i\:(i \in I)$.
The other parts of Lemma~\ref{didnt} check the remaining hypothesis 
$\dim \Hom_{\overline{\cD}}(L_i, L_j) = \delta_{i,j}$ of Lemma~\ref{Mue},
thereby showing that $\overline{\cD}$ is semisimple.
The final assertion follows by Lemma~\ref{didnt} again.
\end{proof}

Finally, we record the following, which makes the universal property
of the semisimplification $\overline{\cD}$ explicit.

\begin{lemma}\label{jonscor}
Suppose that $\cD$ satisfies the conditions of Theorem~\ref{jonsversion}.
Let $F:\cD\to\cA$ be a full symmetric monoidal 
functor to a semisimple symmetric tensor
category $\cA$.
Then there is a unique fully faithful symmetric monoidal functor
$
U: \overline{\cD}  \longrightarrow
\cA
$
such that $F = U \circ Q$.
\end{lemma}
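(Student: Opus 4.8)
The plan is to construct $U$ on objects and morphisms directly, then verify well-definedness, faithfulness, and the monoidal structure. First I would use that $Q:\cD\to\overline\cD$ is full and that $\overline\cD$ is semisimple by Theorem~\ref{jonsversion}: every object of $\overline\cD$ is a direct sum of the $L_i$, and $Q$ is surjective on morphisms. So to define $U$, it suffices to set $U(Q(X)):=F(X)$ on objects and $U(Q(f)):=F(f)$ on morphisms, \emph{provided} this is independent of the choice of representative $f$. This independence is the crux: I must show that $F$ kills all negligible morphisms, i.e. $F(\mathcal N(X,Y))=0$. Since $\mathcal N$ is the tensor ideal of negligible morphisms and $F$ is symmetric monoidal, $F(f)$ is again negligible in $\cA$ for $f$ negligible (symmetric monoidal functors preserve trace, hence send negligible morphisms to negligible morphisms); but in the semisimple tensor category $\cA$ the ideal of negligible morphisms is zero, because every indecomposable (= irreducible) object $L$ of $\cA$ has $\Dim L\neq 0$ — otherwise by Lemma~\ref{didnt}(2) it would already be zero in $\cA=\overline\cA$, contradiction, as $\cA$ is its own semisimplification being semisimple. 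Hence $F(f)=0$, and $U$ is well defined. Uniqueness is immediate from $F=U\circ Q$ together with $Q$ being bijective on objects up to isomorphism and full on morphisms.

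Next I would check $U$ is a functor and is symmetric monoidal. Functoriality (respecting composition and identities) follows because $Q$ and $F$ are functors and $Q$ is full: given composable morphisms in $\overline\cD$, lift them through $Q$, compose, and apply $F$. The monoidal structure constraints on $U$ are inherited from those of $F$: since $\overline\cD$ carries the monoidal structure making $Q$ strict (or at least monoidal) and $F$ comes with coherence isomorphisms $F(X)\otimes F(Y)\xrightarrow{\sim}F(X\otimes Y)$ and $F(\mathbbm 1)\xrightarrow{\sim}\mathbbm 1$, one transports these along $Q$ to coherence isomorphisms for $U$; the pentagon, triangle, and symmetry axioms for $U$ follow from those for $F$ because $Q$ is full and surjective on objects. (Here I use that $\overline\cD$ and $\cA$ are both rigid symmetric monoidal with $\End(\mathbbm 1)=\k$, as recorded after Definition~\ref{maindef}.)

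Finally, faithfulness of $U$. Both $\overline\cD$ and $\cA$ are semisimple, with irreducible objects indexed by $I$ via $L_i$ in $\overline\cD$. It suffices to check $U$ is injective on each $\Hom_{\overline\cD}(L_i,L_j)$, which is $\delta_{i,j}\k$ by Lemma~\ref{Mue}. For $i\neq j$ there is nothing to prove. For $i=j$, $\Hom_{\overline\cD}(L_i,L_i)=\k\cdot\mathrm{id}$, and $U(\mathrm{id}_{L_i})=\mathrm{id}_{F(X_i)}$ where $L_i=Q(X_i)$; this is nonzero since $F(X_i)$ is a nonzero object (its image $L_i$ in $\overline\cD$ is nonzero, and $F$ factors through... more directly, $Q(X_i)=L_i\neq 0$ forces $\Dim X_i\neq0$ by Lemma~\ref{didnt}(2), and $F$ preserves $\Dim$, so $\Dim F(X_i)=\Dim X_i\neq 0$, hence $F(X_i)\neq 0$). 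Thus $U$ is faithful on irreducibles, and by semisimplicity (additivity of $\Hom$ over the simple decomposition, cf. the Remark after Lemma~\ref{Mue}) it is faithful everywhere. The main obstacle is the well-definedness step — really the assertion that $\cA$ semisimple forces its negligible ideal to vanish — but this is precisely the content of the last sentence of Theorem~\ref{jonsversion} applied to $\cA=\overline\cA$, so once that observation is in hand the rest is routine diagram-chasing through the full functor $Q$.
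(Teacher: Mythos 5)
Your construction of $U$ and the overall architecture are sound, but there is a slip in the key well-definedness step. You assert that ``symmetric monoidal functors preserve trace, hence send negligible morphisms to negligible morphisms''; this implication is false as stated. Trace-preservation only gives $\Tr(F(g)\circ F(f))=\Tr(g\circ f)=0$ for $g$ coming from $\cD$. To conclude that $F(f)$ is negligible in $\cA$ you must control $\Tr(h\circ F(f))$ for \emph{every} $h:FY\to FX$ in $\cA$, and for that you need every such $h$ to be of the form $F(g)$ --- i.e.\ you need the \emph{fullness} of $F$, not merely that it is symmetric monoidal. You do have fullness as a hypothesis and use it elsewhere, so the conclusion is salvageable, but the parenthetical as written would mislead a reader, and the paper is explicit that fullness is what drives this step.

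Beyond that, your route to faithfulness is genuinely different from, and longer than, the paper's. The paper proves the equality of ideals $\ker F=\mathcal N$: the containment $\ker F\subseteq\mathcal N$ is a one-liner from trace-preservation (if $F(f)=0$ then $\Tr(g\circ f)=\Tr(F(g)\circ F(f))=0$ for all $g$), and $\mathcal N\subseteq\ker F$ is the fullness-plus-semisimplicity argument you also give. Once $\ker F=\mathcal N=\ker Q$, both well-definedness and faithfulness of $U$ are immediate in one stroke. You instead prove only the inclusion $\mathcal N\subseteq\ker F$ (enough for well-definedness) and then argue faithfulness separately by decomposing into irreducibles and checking $\Dim F(X_i)=\Dim X_i\neq 0$. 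That argument is correct, but it duplicates information that the trace identity already contains; proving the other inclusion is both shorter and eliminates the need for any appeal to semisimple decompositions in the faithfulness step.
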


\begin{proof}
Let $\mathcal I$ be the kernel of $F$, that is, the collection of all morphisms $f$ in $\cD$ which
are annihilated by the functor $F:\cD \to \cA$. 
Given $f:X \to Y$ in $\mathcal I$, we have that
$\Tr(g\circ f) = \Tr(F(g) \circ F(f)) = \Tr(0) = 0$
for all $g:Y \rightarrow X$. Hence, $\mathcal I \subseteq \mathcal N$.
As the functor $F$ is full, the image under $F$ of 
any $f \in \mathcal N$ is negligible in $\cA$ as well. 
On the other hand, $\cA$ is semisimple, so it has no non-zero negligible
 morphisms (see \cite[$\S$6]{Del}). 
Hence, $\mathcal I = \mathcal N$.

Now to prove the lemma,
note that the objects of $\overline{\cD}$ are the same as the
objects of $\cD$, so we must take $UX := FX$ for $X \in \cD$. Then
on a morphism $\bar f \in \Hom_{\overline{\cD}}(X,Y)$, we must take $U(\bar
f) := F(f)$ where $f \in \Hom_{\cD}(X,Y)$ is any lift chosen so that $Q(f) =
\bar f$. By the previous paragraph, this is well-defined and faithful.
\end{proof}

\section{Construction of the equivalence}

Given a parameter $t\in \k$, the {\em oriented Brauer
category} $\OB(t)$ is the free rigid symmetric monoidal
category generated by an object of categorical dimension $t$.
It can be realized explicitly using
the usual string calculus for strict monoidal categories, as follows.
The objects of $\OB(t)$ are words 
in the symbols
$\uparrow$ (the generating object) and $\downarrow$ (its dual).
For two such words $X=X_1\cdots X_r$ and $Y=Y_1\cdots Y_s$, 
an $X \times Y$ {\em oriented Brauer diagram} is
a diagrammatic representation of a 
bijection
$$
\{i\:|\:X_i = \up\}\sqcup\{j\:|\:Y_j = \down\}
\stackrel{\sim}{\rightarrow}
\{i\:|\:X_i = \down\}\sqcup\{j\:|\:Y_j = \up\}
$$
obtained by placing vertices labeled in order from left to right
according to the letters of the word $X$ (resp., $Y$) on the top
(resp., bottom) boundary, then connecting these vertices with
strings
as prescribed by the given bijection.
For example, the following is a
$\down \down\up\up\times \down \up\up\down$
oriented Brauer diagram:
\begin{equation*}
\mathord{\begin{tikzpicture}[baseline = 0,scale=0.7]
\draw[-] (0,1) to [out=-90,in=180] (.45,-0.6);
\draw[-] (.45,-.6) to [out=0,in=180] (1.4,0.6);
\draw[->] (2,.3) to [out=0,in=-90] (2.4,1);
\draw[<-] (0,-1) to [out=70,in=190] (.4,.2);
\draw[-] (.4,.2) to [out=10,in=120] (0.8,-1);
\draw[-] (1.4,.6) to [out=0,in=180] (2,0.3);
\draw[-] (0.8,1) to [out=-120,in=150] (1.8,-0.5);
\draw[->] (1.8,-0.5) to [out=-30,in=90] (2.4,-1);
\draw[-] (2,0.1) to [out=0,in=0] (2,-.3);
\draw[->] (2,-.3) to [out=180,in=-90] (1.6,1);
\draw[-] (1.6,-1) to [out=90,in=-180] (2,0.1);
\end{tikzpicture}}
\,.
\end{equation*}
Two $X \times Y$ oriented Brauer diagrams are {\em equivalent} if they
represent the same bijection.
The morphism space $\Hom_{\OB(t)}(Y,X)$ 
is the vector space with basis given by the equivalence classes $[f]$ of 
$X \times Y$ oriented Brauer diagrams. The tensor product $[f] \otimes
[g]$ of two morphisms is the equivalence class defined by
the horizontal concatenation of the diagrams $f$ and $g$. The composition $[f]
\circ [g]$ is 
obtained by vertically stacking the diagram $f$ on top of $g$ then
removing closed bubbles in the interior of the diagram,
multiplying by $t$ each time a bubble is removed.
Alternatively, the category $\OB(t)$ can be defined rather concisely by
generators and relations; see
\cite{BCNR}.

Let $\Kar(\OB(t))$ be the Karoubi envelope of $\OB(t)$, that is, the idempotent
completion of its additive envelope.
When $\k$ is of characteristic zero, this category is better known as
the {\em Deligne category} $\REP(GL_t)$, but since we are most
interested in the positive characteristic case we will avoid this
terminology\footnote{The appropriate analog of the Deligne category in
positive characteristic is bigger than $\Kar(\OB(t))$.}.
The category $\Kar(\OB(t))$ is relevant to the problem in hand since, 
taking $t$ to be the image of $n \in \N$ in the field
$\k$, 
there is a symmetric monoidal functor
\begin{equation}\label{psin}
\Psi_n:\Kar(\OB(t)) \rightarrow \Tilt{G_n}
\end{equation}
sending $\uparrow$ to the natural $G_n$-module $V_n$ and $\downarrow$
to the dual module $V_n^*$.
By a version of Schur-Weyl duality, 
this functor is {\em full}, and it is {\em dense} if either $p=0$
or $p > n$; e.g., see \cite{HOMFLY}.

\begin{remark}\label{upgrade}
When $p=0$ or $p > n$ (and $t$ is the image of $n$ in $\k$ still), 
the functor $\Psi_n$ induces an equivalence of
symmetric monoidal categories between 
$\Kar(\OB(t) / \mathcal I_n)$ and $\Tilt{G_n}$,
where $\mathcal I_n$ is the tensor ideal
of $\OB(t)$ generated by the endomorphism of
$\up^{\otimes (n+1)}$ associated to the quasi-idempotent
$\sum_{g \in S_{n+1}} (-1)^{\ell(g)} g$ in the group algebra $\k S_{n+1}$ of
the symmetric group.
This is explained in detail in \cite{HOMFLY}. 
This article also constructs a categorical action of the Kac-Moody
2-category $\mathfrak{U}(\mathfrak s)$ on $\Kar(\OB(t))$ in the same
spirit as (\ref{iso1})--(\ref{iso2}), showing 
that
\begin{equation}\label{iso3}
\C\otimes_\Z K_0(\Kar(\OB(t))) \cong V(-\Lambda_0) \otimes
V(\Lambda_{-n})
\end{equation}
as an $\mathfrak{s}$-module, i.e., it is the tensor product of the
integrable lowest weight module of lowest weight $-\Lambda_0$ and the
integrable highest weight module of highest weight $\Lambda_{-n}$.
\end{remark}

\begin{lemma}\label{rainy}
Assume that $t \in \k$ is the image of $n \in \N$.
Then the semisimplifications $\overline{\Kar(\OB(t))}$ and
$\TILT{G_n}$ are semisimple symmetric tensor categories. Moreover,
if $p=0$ or $p > n$,
the functor $\Psi_n$ induces an equivalence of symmetric monoidal
categories
$$
\overline{\Psi}_n:\overline{\Kar(\OB(t))}\rightarrow\TILT{G_n}.
$$
\end{lemma}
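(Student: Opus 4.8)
The plan is to prove the two semisimplicity statements via Theorem~\ref{jonsversion}, and then deduce the equivalence from Lemma~\ref{jonscor}. For the semisimplicity of $\TILT{G_n}$, observe that $\Tilt{G_n}$ is a Karoubian rigid symmetric monoidal category with $\End(\mathbbm 1)=\k$ (it sits inside the tensor category $\Rep{G_n}$), so it suffices to verify condition~\eqref{itm3:cond_ss} of Theorem~\ref{jonsversion}: there is a symmetric monoidal functor from $\Tilt{G_n}$ to a symmetric tensor category, namely the inclusion $\Tilt{G_n}\hookrightarrow\Rep{G_n}$. Hence $\TILT{G_n}$ is a semisimple symmetric tensor category. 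For $\overline{\Kar(\OB(t))}$, the same strategy applies: the functor $\Psi_n:\Kar(\OB(t))\to\Tilt{G_n}\hookrightarrow\Rep{G_n}$ is symmetric monoidal with target a symmetric tensor category, so Theorem~\ref{jonsversion} again yields that $\overline{\Kar(\OB(t))}$ is a semisimple symmetric tensor category. (Equivalently, one may note that condition~\eqref{itm2:cond_ss} is inherited: any nilpotent endomorphism in $\Kar(\OB(t))$ maps under $\Psi_n$ to a nilpotent endomorphism in $\Rep{G_n}$, which has trace zero there, and $\Psi_n$ preserves trace; since trace in $\Kar(\OB(t))$ lands in $\k=\End(\mathbbm 1)$, it is already zero.)

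For the equivalence, assume $p=0$ or $p>n$. Then $\Psi_n$ is full (always) and dense (under this hypothesis) by the cited version of Schur--Weyl duality. Since $\Tilt{G_n}$ is Karoubian and $\Psi_n$ is full and dense, the induced functor $\overline{\Psi}_n$ is still full and essentially surjective onto $\TILT{G_n}$: fullness descends because $Q:\Tilt{G_n}\to\TILT{G_n}$ is full, and essential surjectivity is immediate since every object of $\TILT{G_n}$ is the image of an object of $\Tilt{G_n}$, hence of an object of $\Kar(\OB(t))$. So $\overline{\Psi}_n$ is a full symmetric monoidal functor from $\overline{\Kar(\OB(t))}$ to the semisimple symmetric tensor category $\TILT{G_n}$; but $\overline{\Kar(\OB(t))}$ also satisfies the conditions of Theorem~\ref{jonsversion}, so applying Lemma~\ref{jonscor} with $\cD=\Kar(\OB(t))$, $\cA=\TILT{G_n}$, and $F=Q\circ\Psi_n$ produces a fully faithful symmetric monoidal functor $\overline{\Kar(\OB(t))}\to\TILT{G_n}$ factoring $F$; by uniqueness in the universal property this functor is exactly $\overline{\Psi}_n$. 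Being fully faithful and essentially surjective, it is an equivalence.

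The main point to be careful about is checking that $\overline{\Psi}_n$ genuinely coincides with the functor produced by Lemma~\ref{jonscor} — i.e. that $F=Q_{\Tilt{G_n}}\circ\Psi_n$ factors through $Q_{\Kar(\OB(t))}$ with the expected factorization, so that the abstractly-constructed fully faithful functor is the one we want. This is really bookkeeping: $\Psi_n$ sends negligible morphisms to negligible morphisms (it preserves trace and is full, so the argument in the proof of Lemma~\ref{jonscor} showing $\mathcal I=\mathcal N$ applies verbatim), hence $F$ kills $\mathcal N$ in $\Kar(\OB(t))$ and descends; the descended functor is $\overline\Psi_n$ by construction, and it agrees with the Lemma~\ref{jonscor} functor by the uniqueness clause. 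No genuine obstacle remains; the only mild subtlety is that fullness of $\Psi_n$ is needed both to get the equivalence and to run the $\mathcal I=\mathcal N$ identification, and density (hence the hypothesis $p=0$ or $p>n$) is needed only for essential surjectivity.
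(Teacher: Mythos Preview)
Your proof is correct and follows essentially the same approach as the paper: both use the embedding $\Tilt{G_n}\hookrightarrow\Rep{G_n}$ (resp.\ its composite with $\Psi_n$) to verify the hypotheses of Theorem~\ref{jonsversion}, and then apply Lemma~\ref{jonscor} to the full dense functor $Q\circ\Psi_n$ when $p=0$ or $p>n$. Your extra paragraph verifying that the functor produced by Lemma~\ref{jonscor} is the expected $\overline{\Psi}_n$ and that it is essentially surjective is fine bookkeeping that the paper leaves implicit.
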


\begin{proof}
Since $\Tilt{G_n}$ embeds into the tensor category $\Rep{G_n}$, we get
that
$\TILT{G_n}$ is a semisimple symmetric tensor category by
  Theorem~\ref{jonsversion}.
Similarly, we get that
$\overline{\Kar(\OB(t))}$ is a semisimple symmetric tensor category 
by considering the composition of the symmetric monoidal functor
(\ref{psin}) with the inclusion of $\Tilt{G_n}$ into $\Rep{G_n}$.
If $p = 0$ or $p > n$ then $\Psi_n$ is full and dense, hence, so too is 
$$
\widetilde{\Psi}_n := Q \circ \Psi_n:\Kar(\OB(t)) \rightarrow \TILT{G_n}.
$$
Applying Lemma~\ref{jonscor},
this descends
to give the symmetric monoidal equivalence
$\overline{\Psi}_n$.
\end{proof}

When $0 < p \leq n$, the functor $\Psi_n$ is no longer
dense. To rectify this, we need to work more generally with the
{\em colored oriented Brauer category}
$\OB(t_0,\dots,t_r)$, that is, the free rigid
symmetric 
monoidal category generated by $(r+1)$ objects $\up_0,\dots,\up_r$ of
dimensions 
$t_0,\dots,t_r\in\k$, respectively.
The definition of this is similar to $\OB(t)$, except that now
strings are labeled by an additional color from the set
$\{0,\dots,r\}$.
Thus, $\OB(t_0,\dots,t_r)$ has generating objects $\{\uparrow_i,
\downarrow_i\:|\:i=0,\dots,r\}$, and morphisms are $\k$-linear
combinations of equivalence classes of {\em colored oriented Brauer diagrams}.
Horizontal and vertical composition are as before;
in the latter case,
one multiplies by the parameter $t_i$ each time a closed bubble of
color $i$ is removed.

\begin{lemma}\label{notsnowy}
Suppose that $t_0,\dots,t_r\in\k$ are the images of $n_0,\dots,n_r
\in \N$. Then the semisimplification
$\overline{\Kar(\OB(t_0,\dots,t_r))}$ is a semisimple symmetric tensor category.
Moreover, assuming either $p=0$ or $p > \max(n_0,\dots,n_r)$,
there is an equivalence of symmetric monoidal categories
$$
\overline{\Psi}_{n_0,\dots,n_r}:
\overline{\Kar(\OB(t_0,\dots,t_r))}
\rightarrow 
\TILT{G_{n_0}} \boxtimes\cdots\boxtimes
\TILT{G_{n_r}}.
$$
sending $\up_i$ to $V_{n_i}$, the natural $G_{n_i}$-module, and
$\down_i$ to $V_{n_i}^*$.
\end{lemma}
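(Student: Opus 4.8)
I would prove this by adapting the proof of Lemma~\ref{rainy}, using the universal property of $\OB(t_0,\dots,t_r)$ together with the fact that both its morphism spaces and the Deligne product factorize over the colours. Set $\mathcal S:=\TILT{G_{n_0}}\boxtimes\cdots\boxtimes\TILT{G_{n_r}}$; by the proof of Lemma~\ref{rainy} each $\TILT{G_{n_i}}$ is a semisimple symmetric tensor category, hence so is $\mathcal S$. Let $Q_i:\Tilt{G_{n_i}}\to\TILT{G_{n_i}}$ be the semisimplification functor, put $\widetilde{\Psi}_{n_i}:=Q_i\circ\Psi_{n_i}$ as in the proof of Lemma~\ref{rainy}, and let $\iota_i:\TILT{G_{n_i}}\to\mathcal S$ be the symmetric monoidal functor inserting an object into the $i$th tensor factor. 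The object $\iota_i(Q_i(V_{n_i}))\in\mathcal S$ has categorical dimension $\Dim V_{n_i}$, which is the image of $n_i$ in $\k$, namely $t_i$; so the universal property of $\OB(t_0,\dots,t_r)$, extended to the Karoubi envelope (legitimate since $\mathcal S$ is idempotent complete), produces a symmetric monoidal functor
\[
\widetilde{\Psi}_{n_0,\dots,n_r}:\Kar(\OB(t_0,\dots,t_r))\longrightarrow\mathcal S,\qquad\up_i\mapsto\iota_i(Q_i(V_{n_i})),\quad\down_i\mapsto\iota_i(Q_i(V_{n_i}^*)).
\]
Since $\mathcal S$ is a symmetric tensor category, Theorem~\ref{jonsversion} immediately gives that $\overline{\Kar(\OB(t_0,\dots,t_r))}$ is a semisimple symmetric tensor category, for every $p$. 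Assuming now $p=0$ or $p>\max(n_0,\dots,n_r)$, it remains by Lemma~\ref{jonscor} to show that $\widetilde{\Psi}_{n_0,\dots,n_r}$ is full and dense, for then it descends to a fully faithful functor on semisimplifications which, being moreover dense, is the desired equivalence $\overline{\Psi}_{n_0,\dots,n_r}$.

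Density is immediate: every object of $\mathcal S$ is a finite direct sum of external products $X_0\boxtimes\cdots\boxtimes X_r$ with $X_i\in\Tilt{G_{n_i}}$, and under the hypothesis on $p$ the functor $\Psi_{n_i}$ is dense, so each $X_i$ is a direct summand of a tensor word in $V_{n_i}$ and $V_{n_i}^*$; applying the $\iota_i$ and multiplying (using $\iota_0(A_0)\otimes\cdots\otimes\iota_r(A_r)\cong A_0\boxtimes\cdots\boxtimes A_r$) shows that $X_0\boxtimes\cdots\boxtimes X_r$ is a summand of a tensor product of the objects $\iota_i(Q_i(V_{n_i}))$ and $\iota_i(Q_i(V_{n_i}^*))$, i.e.\ of the image under $\widetilde{\Psi}_{n_0,\dots,n_r}$ of a word.

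For fullness it is enough to check surjectivity on morphism spaces between objects that are words, since this propagates to additive and Karoubi envelopes. Because source and target are symmetric monoidal and $\widetilde{\Psi}_{n_0,\dots,n_r}$ is symmetric monoidal, I may assume the two words are colour-separated, $W=W^{(0)}\otimes\cdots\otimes W^{(r)}$ and $W'=W'^{(0)}\otimes\cdots\otimes W'^{(r)}$ with $W^{(i)},W'^{(i)}$ involving only the colour $i$. A colored oriented Brauer diagram $W\to W'$ is precisely a tuple of monochrome oriented Brauer diagrams $W^{(i)}\to W'^{(i)}$, one per colour, so there is a canonical isomorphism
\[
\Hom_{\OB(t_0,\dots,t_r)}(W,W')\;\cong\;\bigotimes_{i=0}^r\Hom_{\OB(t_i)}(\bar W^{(i)},\bar W'^{(i)}),
\]
where $\bar W^{(i)}$ is $W^{(i)}$ with the colour forgotten. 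On the other hand $\widetilde{\Psi}_{n_0,\dots,n_r}(W)=\widetilde{\Psi}_{n_0}(\bar W^{(0)})\boxtimes\cdots\boxtimes\widetilde{\Psi}_{n_r}(\bar W^{(r)})$, so the defining property of the Deligne product yields
\[
\Hom_{\mathcal S}\bigl(\widetilde{\Psi}_{n_0,\dots,n_r}(W),\widetilde{\Psi}_{n_0,\dots,n_r}(W')\bigr)\;\cong\;\bigotimes_{i=0}^r\Hom_{\TILT{G_{n_i}}}\bigl(\widetilde{\Psi}_{n_i}(\bar W^{(i)}),\widetilde{\Psi}_{n_i}(\bar W'^{(i)})\bigr).
\]
Granting that $\widetilde{\Psi}_{n_0,\dots,n_r}$ carries the first isomorphism to the second, fullness follows: each $\widetilde{\Psi}_{n_i}=Q_i\circ\Psi_{n_i}$ is a composite of full functors, hence full, so the right-hand map is a tensor product of surjections and therefore surjective.

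The step requiring genuine care is exactly this compatibility: that $\widetilde{\Psi}_{n_0,\dots,n_r}$ intertwines the colourwise factorization of $\Hom$-spaces in $\OB(t_0,\dots,t_r)$ with the Deligne-product factorization of $\Hom$-spaces in $\mathcal S$; equivalently, that the image of a colored Brauer diagram is the external tensor product of the images of its monochrome parts. This I would verify with the string calculus: express a colored diagram as a composite of tensor products of generating cups, caps and crossings; note that $\widetilde{\Psi}_{n_0,\dots,n_r}$ sends a colour-$i$ generator into the $i$th tensor factor via $\widetilde{\Psi}_{n_i}$; and observe that it sends a crossing of two strands of distinct colours $i\ne j$ to the symmetry of $\mathcal S$ between an object supported in factor $i$ and one supported in factor $j$, which, by the slotwise description of the symmetric structure on a Deligne product of symmetric monoidal categories, is given by the unit constraints alone and hence does not couple the colour-$i$ and colour-$j$ data of the diagram. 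Granting this, the lemma follows. (Indices $i$ with $n_i=0$ merely contribute a trivial factor $\TILT{G_0}\simeq\Vect$ to $\mathcal S$ and cause no trouble.)
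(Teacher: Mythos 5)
Your proof is correct and follows essentially the same route as the paper: use the universal property of $\OB(t_0,\dots,t_r)$ to produce a symmetric monoidal functor $\widetilde{\Psi}_{n_0,\dots,n_r}$, note that Theorem~\ref{jonsversion} then gives semisimplicity of $\overline{\Kar(\OB(t_0,\dots,t_r))}$, and invoke Lemma~\ref{jonscor} once $\widetilde{\Psi}_{n_0,\dots,n_r}$ is shown full and dense. The only differences are that you land directly in $\TILT{G_{n_0}}\boxtimes\cdots\boxtimes\TILT{G_{n_r}}$ rather than first constructing $\Psi_{n_0,\dots,n_r}$ into $\Tilt{G_{n_0}}\boxtimes\cdots\boxtimes\Tilt{G_{n_r}}$ and then applying the factorwise quotient (which gives the same functor, while sidestepping any need to identify the semisimplification of a Deligne product with the Deligne product of semisimplifications), and that you spell out the colorwise factorization of hom-spaces in $\OB(t_0,\dots,t_r)$ and match it against the Deligne-product factorization of hom-spaces in $\mathcal S$ — a compatibility the paper dispenses with via a single ``hence,'' so your write-up makes explicit exactly the step the original leaves to the reader.
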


\begin{proof}
By universal properties,
there is a symmetric monoidal functor
\begin{equation*}
\widetilde{\Psi}_{n_0,\dots,n_r}:\Kar(\OB(t_0,\dots,t_r))
\rightarrow 
\TILT{G_{n_0}} \boxtimes\cdots\boxtimes
\TILT{G_{n_r}}
\end{equation*}
sending $\up_i$ to $V_{n_i}$ and $\down_i$ to $V_{n_i}^*$.
If $p=0$ or $p > \max(n_0,\dots,n_r)$,
the symmetric monoidal functors $\Psi_{n_i}:\Kar(\OB(t_i))\rightarrow \Tilt{G_{n_i}}$ defined as in (\ref{psin}) are all full and
dense,
hence,
$\widetilde{\Psi}_{n_0,\dots,n_r}$ is full and dense too.
Since $\TILT{G_{n_0}} \boxtimes\cdots\boxtimes
\TILT{G_{n_r}}$ is a semisimple symmetric tensor category,
Theorem~\ref{jonsversion} implies
that $\overline{\Kar(\OB(t_0,\dots,t_r))}$ is a semisimple symmetric tensor
category.
Finally, Lemma~\ref{jonscor}
gives that $\widetilde{\Psi}_{n_0,\dots,n_r}$ descends to the desired equivalence
$\overline{\Psi}_{n_0,\dots,n_r}$.
\end{proof}

Now we can explain the strategy for the construction of the
equivalence $\Xi_n$ in the Main Theorem.
Assume that $p > 0$ and fix a $p$-adic decomposition of $n$ as in
(\ref{padic}).
Let $t_i \in \k$ be the image of $n_i$.
By a special case of (\ref{classicallucas}), we have that
$$
\dim
{\textstyle\bigwedge^{p^i}} V_n
= \binom{n}{p^{i}} \equiv n_i\pmod{p}.
$$
Hence, 
there is a symmetric monoidal functor
\begin{equation}\label{xmas}
\Phi_n
:\Kar(\OB(t_0,\dots,t_r)) \rightarrow \Tilt{G_n}
\end{equation}
sending $\up_i$ to $\bigwedge^{p^{i}} V_n$ and $\down_i$ to
$\bigwedge^{p^{i}} V_n^*$.

\begin{lemma}\label{density}
In the setup of (\ref{padic}),
the category $\TILT{G_n}$ is generated as a Karoubian monoidal
category by the exterior powers $\bigwedge^{p^{i}} V_n$ of the natural
$G_n$-module $V_n$ and their duals for $i=0,\dots,r$.
\end{lemma}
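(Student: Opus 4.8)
The plan is to deduce the statement for $\TILT{G_n}$ from the corresponding, in fact stronger, assertion inside $\Tilt{G_n}$ itself. Recall from the introduction that $\Tilt{G_n}$ is generated as a Karoubian monoidal category by the modules $\bigwedge^k V_n$ for $1\le k\le n$ together with their duals. So, letting $\cC\subseteq\Tilt{G_n}$ be the Karoubian monoidal subcategory generated by $\bigwedge^{p^{i}}V_n$ and $\bigwedge^{p^{i}}V_n^*$ for $i=0,\dots,r$, it suffices to show that each $\bigwedge^k V_n$ and each $\bigwedge^k V_n^*$ with $1\le k\le n$ already lies in $\cC$; this forces $\cC=\Tilt{G_n}$, and since the quotient functor $Q\colon\Tilt{G_n}\to\TILT{G_n}$ is full and surjective on objects, it then carries a Karoubian monoidal generating set of $\Tilt{G_n}$ to one of $\TILT{G_n}$, giving the claim.

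To prove that $\bigwedge^k V_n\in\cC$, first write $k=k_0+k_1p+\cdots+k_rp^{r}$ with $0\le k_i<p$; this is possible because $n<p^{r+1}$, and moreover $p^{i}\le n$ for $i\le r$ so that $\bigwedge^{p^{i}}V_n\ne 0$, both facts being immediate from the $p$-adic decomposition (\ref{padic}). Then I would consider the $G_n$-equivariant composite
\[
\bigwedge\nolimits^{k} V_n \;\stackrel{\Delta}{\longrightarrow}\; \bigotimes_{i=0}^{r}\Bigl(\bigwedge\nolimits^{p^{i}}V_n\Bigr)^{\otimes k_i} \;\stackrel{m}{\longrightarrow}\; \bigwedge\nolimits^{k} V_n
\]
built from the iterated comultiplication and multiplication of the exterior algebra of $V_n$. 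A routine sign count identifies this composite with multiplication by the multinomial coefficient $k!\big/\prod_{i=0}^{r}(p^{i}!)^{k_i}$, and Legendre's formula $v_p(m!)=(m-s_p(m))/(p-1)$, with $s_p$ the base-$p$ digit sum, shows that the $p$-adic valuation of this coefficient is $\frac{1}{p-1}\bigl(k-s_p(k)-\sum_{i=0}^{r}k_i(p^{i}-1)\bigr)=0$, using $s_p(k)=\sum_i k_i$ and $k=\sum_i k_ip^{i}$. Hence the composite is a nonzero scalar, exhibiting $\bigwedge^k V_n$ as a direct summand of $\bigotimes_{i=0}^{r}(\bigwedge^{p^{i}}V_n)^{\otimes k_i}$ already in $\Tilt{G_n}$, so $\bigwedge^k V_n\in\cC$; dualizing the same retraction puts $\bigwedge^k V_n^*$ in $\cC$ as well, completing the reduction.

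I do not expect a genuine obstacle here. The two ingredients needing verification are the identification of $m\circ\Delta$ with the multinomial coefficient, which is the standard computation with the Hopf structure on the exterior algebra, and the fact that this coefficient is prime to $p$, which is the elementary carry-free-addition (Kummer/Legendre) estimate above. The only point worth a little care is the bookkeeping in the first paragraph: that a Karoubian monoidal subcategory of $\Tilt{G_n}$ containing a Karoubian monoidal generating set must be all of $\Tilt{G_n}$, and that this property is inherited by $\TILT{G_n}$ through the full functor $Q$, which is the identity on objects. I would present the argument in exactly the order above.
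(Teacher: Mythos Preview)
Your argument is correct and in fact proves more than the paper does: you establish that $\Tilt{G_n}$ itself (not only its semisimplification) is generated as a Karoubian monoidal category by the $\bigwedge^{p^{i}}V_n$ and their duals. The paper instead works directly in $\TILT{G_n}$: it first invokes Lucas' theorem to observe that $\bigwedge^k V_n$ becomes zero in $\TILT{G_n}$ unless the $p$-adic digits of $k$ satisfy $k_i\le n_i$, and only then, for such $k$, exhibits $\bigwedge^k V_n$ as a summand of $\bigotimes_i(\bigwedge^{p^{i}}V_n)^{\otimes k_i}$ via a two-step splitting through the intermediate modules $W_i=\bigwedge^{k_ip^{i}}V_n$, appealing to Lucas again for the relevant scalar. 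Your one-step splitting with Legendre's formula handles all $1\le k\le n$ uniformly (you only need $k_i<p$, not $k_i\le n_i$), so the preliminary ``killing off bad $k$'' step is unnecessary. The trade-off is that your approach gives a cleaner, stronger statement with a single multinomial computation, while the paper's restriction to $k$ with $k_i\le n_i$ is natural from the point of view of the Main Theorem and is reused later (e.g.\ in Lemma~\ref{starters}).
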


\begin{proof}
By highest weight considerations, 
the Karoubian monoidal category $\Tilt{G_n}$ is generated by the
exterior powers $\bigwedge^k V_n$ and their duals for $k=1,\dots,n$. 
By Lucas' theorem (\ref{classicallucas}),
$\dim \bigwedge^k V_n \equiv 0\pmod{p}$, hence, $\bigwedge^k V_n$ is
zero in
$\TILT{G_n}$, unless $k = k_0+k_1p+\cdots+k_r p^{r}$
for $0 \leq k_0 \leq n_0,\dots,0 \leq k_r \leq n_r$.
Therefore, $\TILT{G_n}$ is generated by the exterior powers
$\bigwedge^k V_n$ and their duals for $k$ of this special form.
To complete the proof, we show\footnote{The argument here corrects a mistake in the published version of the article which was pointed out to us by Elijah Bodish.
A related correction has been made to the proof of Lemma~\ref{starters}.} for any such $k$ that
$\bigwedge^k V_n$ is a summand of the tilting
module
\begin{equation*}
\textstyle
T := \left(V_n\right)^{\otimes k_0} 
\otimes \left(\bigwedge^p V_n\right)^{\otimes k_1}
\otimes\cdots\otimes
\left(\bigwedge^{p^{r}} V_n\right)^{\otimes k_r}.
\end{equation*}
First, we claim that $\bigwedge^{k_i p^i} V_n$
is a summand of the tilting module
$\left(\bigwedge^{p^i}
  V_n\right)^{\otimes k_i}$.
%  This follows because 
%  $W_i := \bigwedge^{k_i}\left(\bigwedge^{p^i} V_n\right)$ is a summand of $\left(\bigwedge^{p^i}
%  V_n\right)^{\otimes k_i}$, being the image of the endomorphism defined by the idempotent $e_i := 
%  \frac{1}{k_i!} \sum_{w \in S_{k_i}} (-1)^{\ell(w)} w \in \k S_{k_i}$
%  acting in the obvious way.
  Using the diagrammatic notation for homomorphisms introduced in Theorem~\ref{dthm} in the next section, consider
  $$
  e_i :=\frac{1}{k_i !} \Sigma_n\left(\;
  \begin{tikzpicture}[baseline = 8]
	\draw[-,thick] (0.35,-.1) to [out=90,in=-120] (0.09,0.34);
	\node at (0.17,0) {$\scriptstyle\cdots$};
	\draw[-,thick] (-0.05,-.1) to [out=90,in=-100](.095,0.26);
	\draw[-,thick] (-0.2,-.1) to [in=-60,out=90](0.07,0.34);
	\draw[-,line width=2pt] (0.08,.4) to (0.08,0.2);
	\draw[-,thick] (0.35,.8) to [out=-90,in=120] (0.09,0.36);
	\node at (0.17,.7) {$\scriptstyle\cdots$};
	\draw[-,thick] (-0.05,.8) to [out=-90,in=100](.095,0.44);
	\draw[-,thick] (-0.2,.8) to [in=60,out=-90](0.07,0.36);
	\draw[-,line width=2pt] (0.08,.2) to (0.08,.5);
\end{tikzpicture}\;\right) \in \End_{G_n}\left(\left({\textstyle\bigwedge^{p^i} V_n}\right)^{\otimes {k_i}}\right),
  $$
  where there are $k_i$ strings at the top and bottom, each of thickness $p^i$.
  The division by $k_i!$ makes sense because $k_i < p$.
  This $G_n$-module homomorphism is the composition of the natural projection
 $\Sigma_n\Big(\;\begin{tikzpicture}[baseline = -2,scale=.9]
	\draw[-,thick] (0.35,-.3) to [out=90,in=-120] (0.09,0.14);
	\node at (0.17,-.2) {$\scriptstyle\cdots$};
	\draw[-,thick] (-0.05,-.3) to [out=90,in=-100](.095,0.06);
	\draw[-,thick] (-0.2,-.3) to [in=-60,out=90](0.07,0.14);
	\draw[-,line width=2pt] (0.08,.3) to (0.08,0);
\end{tikzpicture}\;\Big):\left(\bigwedge^{p^i}
  V_n\right)^{\otimes k_i}\rightarrow \bigwedge^{k_i p^i} V_n$ followed by a homomorphism
  $\frac{1}{k_i!}\Sigma_n\Big(\;\begin{tikzpicture}[baseline = -2,scale=.9]
	\draw[-,thick] (0.35,.3) to [out=-90,in=120] (0.09,-0.14);
	\node at (0.17,.2) {$\scriptstyle\cdots$};
	\draw[-,thick] (-0.05,.3) to [out=-90,in=100](.095,-0.06);
	\draw[-,thick] (-0.2,.3) to [in=60,out=-90](0.07,-0.14);
	\draw[-,line width=2pt] (0.08,-.3) to (0.08,0);
\end{tikzpicture}\;\Big):
\bigwedge^{k_i p^i} V_n\rightarrow \left(\bigwedge^{p^i}
  V_n\right)^{\otimes k_i}$. It is an idempotent because we have that 
  $$
  \begin{tikzpicture}[baseline = 5]
\draw[-,thick] (0.35,.3) to [out=-90,in=120] (0.09,-0.14);
	\node at (0.17,.3) {$\scriptstyle\cdots$};
	\draw[-,thick] (-0.05,.3) to [out=-90,in=100](.095,-0.06);
	\draw[-,thick] (-0.2,.3) to [in=60,out=-90](0.07,-0.14);
	\draw[-,line width=2pt] (0.08,-.2) to (0.08,0);
\draw[-,thick] (0.35,.3) to [out=90,in=-120] (0.09,0.74);
	\draw[-,thick] (-0.05,.3) to [out=90,in=-100](.095,0.66);
	\draw[-,thick] (-0.2,.3) to [in=-60,out=90](0.07,0.74);
	\draw[-,line width=2pt] (0.08,.8) to (0.08,0.6);
\end{tikzpicture}
=
\frac{(k_i p^i)!}{(p^i)!\cdots (p^i)!}\:
\begin{tikzpicture}[baseline=5]
\draw[-,line width=2pt] (0.08,-.2) to (0.08,0.8);
\end{tikzpicture}\:
=
k_i!\:
\begin{tikzpicture}[baseline=5]
\draw[-,line width=2pt] (0.08,-.2) to (0.08,0.8);
\end{tikzpicture}\:,
$$
as may be checked using the relations 
  (\ref{assrel})--(\ref{trivial}) for the first equality and Lucas' theorem for the second. This proves the claim.
%  Hence, $W_i$ is a tilting module. Moreover, the highest weight of $W_i$ is the same as the highest weight of the irreducible tilting module $\bigwedge^{k_i p^i} V_n$, so $\bigwedge^{k_i p^i} V_n$ is a summand of $W_i$.
\iffalse
% PUBLISHED VERSION HERE:
For each $i$, we have that $k_i < p$, hence,
$W_i := \bigwedge^{k_i p^i} V_n$ is the summand of $\left(\bigwedge^{p^i}
  V_n\right)^{\otimes k_i}$
defined by the idempotent $e_{k_i;0} := \frac{1}{k_i!} \sum_{g \in S_{k_i}}
(-1)^{\ell(g)} g \in \k S_{k_i}$ viewed as an endomorphism of this
tensor power of $\bigwedge^{p^i} V_n$ in the natural way.
\fi
It follows that $\bigwedge^{k_0} V_n\otimes
\bigwedge^{k_1 p} V_n
\otimes\cdots\otimes \bigwedge^{k_r p^r} V_n$ is a summand of  $T$.
Now let $f:\bigwedge^k V_n\otimes
\bigwedge^{k_1 p} V_n \hookrightarrow \bigwedge^{k_0} V_n
\otimes\cdots\otimes \bigwedge^{k_r p^r} V_n$ be the canonical inclusion
and $g:\bigwedge^{k_0} V_n\otimes
\bigwedge^{k_1 p} V_n
\otimes\cdots\otimes \bigwedge^{k_r p^r} V_n\twoheadrightarrow \bigwedge^k V_n$
be the canonical projection.
Over any field,
the composition $g \circ f$ is $k! / k_0! (k_1 p)! \cdots (k_r
p^{r})!$ times the identity endomorphism.
Since we are in characteristic $p$, this scalar is $1$ by Lucas' theorem.
This shows that $f$ is a split injection, so $\bigwedge^k V_n$ is a
summand of
$\bigwedge^{k_0} V_n\otimes
\bigwedge^{k_1 p} V_n
\otimes\cdots\otimes \bigwedge^{k_r p^r} V_n$, hence, of $T$.
\end{proof}

Unlike the functor $\Psi_n$ considered in (\ref{psin}), the functor
$\Phi_n$ is neither full nor dense. Nevertheless, Lemma~\ref{density}
implies that \begin{equation}\label{haveit}
\widetilde{\Phi}_n := Q \circ \Phi_n:\Kar(\OB(t_0,\dots,t_r)) \rightarrow
\TILT{G_n}
\end{equation}
is dense.
Moreover, and this is the key step in our argument, $\widetilde{\Phi}_n$ 
is also full. This assertion will be justified in $\S$\ref{fullness};
see Theorem~\ref{fullity} (the proof is rather short but there are
lots of preliminaries!).
Given this fact, we can then apply Lemma~\ref{jonscor} to see that $\widetilde{\Phi}_n$ descends to a symmetric monoidal equivalence
\begin{equation}\label{boxing}
\overline{\Phi}_n:\overline{\Kar(\OB(t_0,\dots,t_r))} \rightarrow \TILT{G_n}.
\end{equation}
The equivalence $\Xi_n$ appearing in the Main Theorem may then be
obtained by composing $\overline{\Phi}_n$ with a quasi-inverse of the
equivalence 
$\overline{\Psi}_{n_0,\dots,n_r}$ from Lemma~\ref{notsnowy}.
To complete the proof of the Main Theorem, it just remains to
identify the labelings of the irreducible objects; this will be
explained in $\S$\ref{comb}.

\section{Webs and the Schur category}\label{fullness}

In this section, we show that the functor $\widetilde{\Phi}_n$ from
(\ref{haveit}) is full.
The proof depends ultimately on a result of Donkin \cite[Proposition 3.11]{D1},
which is a version of skew Howe duality for the general linear group. 
We will explain this using a diagrammatic rather than algebraic formalism, viewing the
Schur algebra in terms of a version of the web category from \cite{CKM}.
However, we start from the classical perspective as in \cite{Green}.

A {\em composition} $\lambda\vDash d$ is a finite sequence $\lambda =
(\lambda_1,\dots,\lambda_n)$
of non-negative integers summing to $d$.
We call it a {\em strict composition} and instead write $\lambda
\Vdash d$ if all of its parts are non-zero.
We write $\ell(\lambda)$ for the total number $n$ of parts.
There is a right action of $S_d$ on the set of $d$-tuples of positive 
integers by place permutation: for 
$\bi =
(i_1,\dots,i_d)$
and
$g \in S_d$ the $d$-tuple $\bi \cdot g$ has $r$th entry $i_{g(r)}$.
For $\lambda\vDash d$, the set
\begin{equation}
\I_\lambda := \left\{\bi = (i_1,\dots,i_d) \:\big|\:
\#\!\left\{r=1,\dots,d\:|\:i_r = i\right\} = \lambda_i
\text{ for all }i \in \nset{\ell(\lambda)}\right\}
\end{equation}
of all $d$-tuples with $\lambda_1$ entries
equal to 1, $\lambda_2$ entries equal to 2, and so on,
is a single orbit under this action.

For $\lambda,\mu\vDash d$, the symmetric group $S_d$ acts diagonally
on the right on 
$\I_\lambda \times \I_\mu$. The orbits are parametrized by the set
$\Theta_{\lambda,\mu}$ of all $\ell(\lambda)\times\ell(\mu)$ matrices
with non-negative integer entries such that
the entries in the $i$th row sum to
$\lambda_i$
and the entries in the $j$th column  sum to $\mu_j$ for all
$i\in \nset{\ell(\lambda)}$ and $j\in\nset{\ell(\mu)}$.
For $A = (a_{i,j}) \in \Theta_{\lambda,\mu}$, the corresponding $S_d$-orbit
on $\I_\lambda\times\I_\mu$ is
\begin{equation}
\Pi_A := \left\{(\bi,\bj) \in \I_\lambda\times\I_\mu \:\Bigg|\:
\begin{array}{l}
\#\!\left\{r=1,\dots,d\:|\:(i_r,j_r) = (i,j)\right\} = a_{i,j}\\
\text{for all  }
i \in \nset{\ell(\lambda)},   j\in \nset{\ell(\mu)}\end{array}
\right\}\!.
\end{equation}

For compositions $\lambda,\mu,\nu\vDash d$,
$A \in \Theta_{\lambda,\mu}, B \in \Theta_{\mu,\nu}$ and $C \in
\Theta_{\lambda,\nu}$, define
\begin{equation}\label{Z}
Z(A,B,C) := \#\!\left\{\bj \:\big|\:(\bi,\bj) \in \Pi_A\text{ and } (\bj,\bk) \in \Pi_B
\right\},
\end{equation}
where $(\bi,\bk)$ is some choice of an element of $\Pi_C$.
This is well-defined independent of the choice of
$(\bi,\bk)$.

\begin{lemma}\label{tricky}
In the notation of (\ref{Z}),
suppose that $(\bi,\bj) \in \Pi_A$ and $(\bj,\bk) \in \Pi_B$ satisfy
$\Stab_{S_d}(\bi) \cap \Stab_{S_d}(\bk) = \Stab_{S_d}(\bj)$.
Then $Z(A,B,C) = 1$ if $(\bi,\bk) \in \Pi_C$, and $Z(A,B,C) = 0$ otherwise.
\end{lemma}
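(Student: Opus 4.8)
The plan is to exploit that each of $\Pi_A$, $\Pi_B$ and $\Pi_C$ is a \emph{single} $S_d$-orbit, so that any two of its elements differ by the (diagonal) place-permutation action, and then to track the permutations that arise through the stabilizers $\Stab_{S_d}(\bi)$, $\Stab_{S_d}(\bj)$, $\Stab_{S_d}(\bk)$, using the hypothesis $\Stab_{S_d}(\bi)\cap\Stab_{S_d}(\bk)=\Stab_{S_d}(\bj)$. Note first that from $(\bi,\bj)\in\Pi_A$ and $(\bj,\bk)\in\Pi_B$ we have $\bi\in\I_\lambda$, $\bj\in\I_\mu$, $\bk\in\I_\nu$, so it makes sense to ask whether $(\bi,\bk)\in\Pi_C$.

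First I would treat the case $(\bi,\bk)\in\Pi_C$. Then $(\bi,\bk)$ is a permissible reference element in the definition (\ref{Z}), so $Z(A,B,C)=\#\{\bj'\mid(\bi,\bj')\in\Pi_A\text{ and }(\bj',\bk)\in\Pi_B\}$, and the given $\bj$ already lies in this set; it remains to prove uniqueness. Given such a $\bj'$, since $(\bi,\bj)$ and $(\bi,\bj')$ both lie in the orbit $\Pi_A$ there is $g\in S_d$ with $(\bi,\bj')=(\bi\cdot g,\bj\cdot g)$, which forces $g\in\Stab_{S_d}(\bi)$ and $\bj'=\bj\cdot g$; similarly $\Pi_B$ gives $h\in\Stab_{S_d}(\bk)$ with $\bj'=\bj\cdot h$. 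Then $\bj\cdot(gh^{-1})=(\bj\cdot g)\cdot h^{-1}=(\bj\cdot h)\cdot h^{-1}=\bj$, so $gh^{-1}\in\Stab_{S_d}(\bj)=\Stab_{S_d}(\bi)\cap\Stab_{S_d}(\bk)\subseteq\Stab_{S_d}(\bk)$, whence $g=(gh^{-1})h\in\Stab_{S_d}(\bk)$, hence $g\in\Stab_{S_d}(\bi)\cap\Stab_{S_d}(\bk)=\Stab_{S_d}(\bj)$ and $\bj'=\bj\cdot g=\bj$. This gives $Z(A,B,C)=1$.

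For the case $(\bi,\bk)\notin\Pi_C$ I would argue by contradiction. Pick any reference $(\bi',\bk')\in\Pi_C$ computing $Z(A,B,C)$ — the value is independent of the choice, as noted after (\ref{Z}) — and suppose some $\bj'$ satisfies $(\bi',\bj')\in\Pi_A$ and $(\bj',\bk')\in\Pi_B$. The single-orbit property again produces $g,h\in S_d$ with $\bi'=\bi\cdot g$, $\bj'=\bj\cdot g=\bj\cdot h$ and $\bk'=\bk\cdot h$; from $gh^{-1}\in\Stab_{S_d}(\bj)\subseteq\Stab_{S_d}(\bk)$ we get $\bk\cdot g=\bk\cdot h=\bk'$, so $(\bi',\bk')=(\bi\cdot g,\bk\cdot g)=(\bi,\bk)\cdot g$ lies in the orbit $\Pi_C$, forcing $(\bi,\bk)\in\Pi_C$ — a contradiction. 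Hence $Z(A,B,C)=0$.

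The only point requiring care — and the closest thing to an obstacle — is keeping the bookkeeping straight: remembering exactly which permutation sits in which stabilizer, correctly using that the place-permutation action is a \emph{right} action, and invoking the hypothesis $\Stab_{S_d}(\bi)\cap\Stab_{S_d}(\bk)=\Stab_{S_d}(\bj)$ in both directions (the inclusion $\supseteq$ to conclude $g\in\Stab_{S_d}(\bj)$ in the first case, and $\subseteq$ to get $gh^{-1}\in\Stab_{S_d}(\bk)$ in both cases). Everything else is formal manipulation.
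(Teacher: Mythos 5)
Your proof is correct and follows essentially the same approach as the paper's: both arguments exploit that $\Pi_A$, $\Pi_B$, $\Pi_C$ are single $S_d$-orbits, track the permutations through the three stabilizers, and use the hypothesis $\Stab_{S_d}(\bi)\cap\Stab_{S_d}(\bk)=\Stab_{S_d}(\bj)$ in both directions. The only difference is organizational — you split into cases up front, whereas the paper first shows in general that the existence of a suitable $\bj'$ forces $(\bi,\bk)\in\Pi_C$ and then separately counts in that case — but the underlying manipulations are the same.
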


\begin{proof}
Pick $(\bi',\bk') \in \Pi_C$.
To calculate $Z(A,B,C)$, we need to count the number of 
$\bj'$ such that $(\bi',\bj') \in \Pi_A$ and
$(\bj',\bk') \in \Pi_B$.
Equivalently, this is the number of $\bj'$
such that $(\bi',\bj') \sim (\bi,\bj)$ and $(\bj',\bk') \sim
(\bj,\bk)$.

If such a $\bj'$ exists, we can find $g \in S_d$ such that $\bj'\cdot g =
\bj$, then have that $(\bi'\cdot g, \bj) \sim (\bi,\bj)$ and $(\bj,
\bk'\cdot g) \sim (\bj,\bk)$.
So there is $h \in \Stab_{S_d}(\bj)$ such that
$\bi'\cdot g = \bi\cdot h$ and $\bk'\cdot g = \bk\cdot h$.
As 
$\Stab_{S_d}(\bj) \subseteq \Stab_{S_d}(\bi) \cap \Stab_{S_d}(\bk)$,
we deduce that $\bi'\cdot g = \bi$ and $\bk'\cdot g = \bk$, hence,
$(\bi,\bk) \in \Pi_C$.

Finally assume that $(\bi,\bk) \in \Pi_C$.
Then, we may as well assume that $(\bi',\bk')= (\bi,\bk)$, 
and $Z(A,B,C)$ is the number of $\bj'$ such that
$(\bi,\bj') \sim (\bi,\bj)$ and $(\bj',\bk) \sim (\bj,\bk)$.
Any such $\bj'$ can be written as $\bj\cdot g$ for $g \in
\Stab_{S_d}(\bi) \cap \Stab_{S_d}(\bk)$. 
As $\Stab_{S_d}(\bi) \cap \Stab_{S_d}(\bk) \subseteq
\Stab_{S_d}(\bj)$, 
we deduce that
$\bj' = \bj$. This shows that $Z(A,B,C) = 1$.
\end{proof}

The numbers $Z(A,B,C)$ arise naturally as the {\em structure
  constants} for multiplication in the Schur algebra.
To recall this, let $V_n$ be the defining representation of $G_n$ with standard
basis $v_1,\dots,v_n$.
The symmetric group $S_d$ acts on the right on the tensor space $V_n^{\otimes d}$ by
permuting tensors.
The {\em Schur algebra} is the endomorphism algebra
\begin{equation}\label{schurdef}
S(n,d) := \End_{S_d}(V_n^{\otimes d}).
% = \bigoplus_{\lambda,\mu \in
%  \Lambda(n,d)}
%\Hom_{G_n}((V_n^{\otimes d})_\mu, V_n^{\otimes d})_\lambda).
\end{equation}
The action of $S_d$ on $V_n^{\otimes d}$ 
commutes with the action of $G_n$, hence, it leaves the
weight spaces of $V_n^{\otimes d}$ invariant. The weights which
arise are the ones in the set
\begin{equation}
\Lambda(n,d) := \{\lambda\vDash d\:|\:\ell(\lambda)=n\}.
\end{equation}
We deduce that the projection $1_\lambda$ of $V_n^{\otimes d}$ onto its
$\lambda$-weight space
gives an idempotent in the Schur algebra. These so-called {\em weight idempotents} for all
$\lambda \in \Lambda(n,d)$ are mutually orthogonal and sum to the
identity in $S(n,d)$.
Note also that $1_\lambda V_n^{\otimes d}$ has basis
$\{v_\bi:= v_{i_1}\otimes\cdots\otimes v_{i_d}\:|\:\bi \in
\I_\lambda\}$, with the action of $g \in S_d$ on this basis satisfying
\begin{equation}\label{brian}
v_\bi g 
= v_{\bi\cdot g}.
\end{equation}
For $\lambda,\mu \in \Lambda(n,d)$
and $A \in \Theta_{\lambda,\mu}$, define the linear map
\begin{equation}\label{cats}
\xi_A:
1_\mu V_n^{\otimes d} \rightarrow 1_\lambda V_n^{\otimes d},
\qquad
v_\bj\mapsto \sum_{\bi\text{\:with\,} (\bi,\bj)\in\Pi_A}
v_\bi.
\end{equation}
%In particular, 
%$\xi_{\diag(\lambda_1,\dots,\lambda_n)}=1_\lambda$.
The endomorphisms
$\left\{\xi_A\:|\:A \in \Theta_{\lambda,\mu}\right\}$
give {\em Schur's basis} for $1_\lambda S(n,d) 1_\mu$.
 Moreover,
multiplication in the Schur algebra satisfies
\begin{equation}\label{schurs}
\xi_A \circ \xi_B := 
\sum_{C \in \Theta_{\lambda,\nu}} Z(A,B,C)
\xi_C
\end{equation}
for $A \in \Theta_{\lambda,\mu}$ and $B \in
  \Theta_{\mu,\nu}$.
This is {\em Schur's product rule}; e.g., see \cite[2.3b]{Green}.

The algebra $S(n,d)$ can also be constructed starting from the general linear
group $G_n$; see \cite[Ch. 2]{Green}. 
From this approach, one sees that
the category $S(n,d)\operatorname{-mod}$ is identified with the full subcategory
of $\Rep{G_n}$ consisting of the
{\em 
polynomial representations of degree $d$}.
Another important aspect of the theory needed later is the {\em Schur functor}
\begin{equation}\label{schurfunctor}
\T:
S(n,d)\operatorname{-mod} \rightarrow
\k S_d\operatorname{-mod}
\end{equation}
as in \cite[Ch. 6]{Green}.
In Green's approach, 
this is defined only when $n \geq d$, so that the composition $\omega := (1^d,
0^{n-d})$ belongs to $\Lambda(n,d)$.
There is an 
algebra isomorphism 
\begin{equation}\label{cruzy}
\k S_d \stackrel{\sim}{\rightarrow} 1_\omega S(n,d) 1_\omega, 
\qquad
g \mapsto \xi_A
\end{equation}
where $A \in \Theta_{\omega,\omega}$ is the $n\times n$ matrix with
$a_{g(1),1}=\cdots=a_{g(d),d} = 1$ and all other entries zero.
Identifying $\k S_d$ with $1_\omega S(n,d) 1_\omega$ in this way, 
$\T$ is the idempotent truncation functor 
associated to the weight idempotent $1_\omega$.
Note also that there is an isomorphism of $(S(n,d), \k S_d)$-bimodules
\begin{equation}\label{crazy}
V_n^{\otimes d} \stackrel{\sim}{\rightarrow} S(n,d) 1_\omega,
\qquad
v_\bi \mapsto \xi_A
\end{equation}
where $A$ here
is the $n\times n$ matrix with $a_{i_1,1} = \cdots =
a_{i_d,d} = 1$ and all other entries zero.
It follows that the Schur functor $\T$ is isomorphic to
$\Hom_{G_n}(V_n^{\otimes d}, -)$.

\begin{definition}\label{scat}
The {\em Schur category}
is the strict monoidal category 
$\Schur$ with
\begin{itemize}
\item objects that are all strict compositions $\lambda \Vdash d$ for all $d \geq 0$;
\item 
for $\lambda\Vdash d$ and $\mu \Vdash d'$,
the morphism space $\Hom_{\Schur}(\mu,\lambda)$ is zero unless
$d=d'$, and it is 
the vector 
space with basis $\{\xi_A\:|\:A \in \Theta_{\lambda,\mu}\}$ if $d=d'$;
\item
the tensor product of objects is defined by concatenation
$\lambda \otimes \mu := \lambda \sqcup \mu$;
\item
the tensor product 
%(horizontal composition) 
of morphisms is defined by $\xi_A \otimes \xi_B :=
\xi_{\diag(A,B)}$, where $\diag(A,B)$ is the obvious block diagonal matrix;
\item
vertical composition of morphisms is defined by Schur's product rule
as in (\ref{schurs}).
\end{itemize}
We leave it to the reader to check that the axioms of
a strict monoidal category are satisfied. 
The unit object $\mathbbm{1}$ is the composition of length zero, and
the identity endomorphism $1_\lambda$ of an object
$\lambda \in \Schur$ is $\xi_{\diag(\lambda_1,\dots,\lambda_{\ell(\lambda)})}$.
%The fact that vertical
%composition is associative is a consequence of associativity of
%multiplication in the Schur algebra.
\end{definition}

\begin{remark}\label{connectr}
Assuming that $n \geq d$, let $\Lambda(n,d)_L$ be the set of
compositions $\lambda \in \Lambda(n,d)$ that are {\em left-justified}, meaning
that
$\lambda = (\lambda_1,\dots,\lambda_m, 0^{n-m})$ with
$\lambda_1,\dots,\lambda_m > 0$.
Let $e := \sum_{\lambda \in \Lambda(n,d)_L} 1_\lambda \in S(n,d)$.
Any weight idempotent in $S(n,d)$ is conjugate to a left-justified
one, hence, 
the algebras $S(n,d)$ and $e S(n,d) e$ are {Morita equivalent}.
Moreover, there is an obvious algebra isomorphism
\begin{equation}\label{connect}
e S(n,d) e =
\bigoplus_{\lambda, \mu \in \Lambda(n,d)_L}
1_\lambda S(n,d) 1_\mu
\cong \bigoplus_{\lambda,\mu\Vdash d}
\Hom_{\Schur}(\mu,\lambda).
\end{equation}
This makes the connection between the Schur algebra and the Schur
category precise.
\end{remark}

\begin{remark}
By (\ref{connect}) and \cite[Theorem 3.2]{FS}, 
the category $\Schur\operatorname{-mod_{fd}}$ of globally
finite-dimensional $\Schur$-modules, i.e., the category of
functors
$V:\Schur \rightarrow \Vect$ 
such that $\bigoplus_{\lambda \in \Schur}
V(\lambda)$ is finite-dimensional, is equivalent to
the
category $\mathcal{P}ol$ of (strict) {\em polynomial functors} from \cite{FS}.
Under this equivalence, the projective $\Schur$-module 
$\Hom_{\Schur}((n),-)$ corresponds to the $n$th
divided power functor $\Gamma^n$.
The category of polynomial functors is symmetric monoidal with a biexact tensor
product functor $-\otimes-$ (see e.g. \cite[Proposition 2.6]{FS}). This structure 
can also be seen directly on $\Schur\operatorname{-mod_{fd}}$
in terms of an induction functor extending the tensor product on the underlying
monoidal category $\Schur$.
In fact, $\mathcal{P}ol$ is the Abelian envelope of the Karoubian
monoidal category $\Schur$ in a
precise sense: any 
%$\k$-linear 
functor $F:\Schur \rightarrow \mathcal{A}$ to an Abelian category
$\mathcal{A}$
factors through the embedding $\Schur \to \mathcal{P}ol, \; Z \mapsto
\Hom(Z, -)^*$ to induce a right-exact functor
$\mathcal{P}ol\rightarrow \mathcal{A}$, which is monoidal in case $F$
is monoidal.
\end{remark}

There are some special families of morphisms
$\xi_A$ in the Schur category
which are easy to understand.
\begin{itemize}
\item
If $A$ is a $1 \times n$ row matrix, we call $\xi_A$ an {\em $n$-fold
merge}; the reason for the terminology will become clear when we
switch to the diagrammatic formalism below.
By Schur's product rule, we have in the Schur category that 
\begin{equation}\label{mss}
\xi_{\left(\begin{smallmatrix}\lambda_1&\cdots&
      \lambda_n\end{smallmatrix}\right)}= 
\xi_{\left(\begin{smallmatrix}\lambda_1+\cdots+\lambda_m&\lambda_{m+1}+\cdots+\lambda_n\end{smallmatrix}\right)}\circ
\left(\xi_{\left(\begin{smallmatrix}\lambda_1&\cdots&
      \lambda_m\end{smallmatrix}\right)}\otimes
\xi_{\left(\begin{smallmatrix}\lambda_{m+1}&\cdots&
      \lambda_n\end{smallmatrix}\right)}\right)
\end{equation}
for $\lambda_1,\dots,\lambda_n > 0$ 
and $1 \leq m < n$; cf. (\ref{spots}) below.
Using this formula recursively, it follows that any $n$-fold merge can
be expressed as a composition of tensor products of two-fold merges
$\xi_{\left(\begin{smallmatrix}a&b\end{smallmatrix}\right)}$.
\item
If $A$ is an $n \times 1$ column matrix, we call $\xi_A$ an {\em
  $n$-fold split}. By the analogous (in fact, transpose) formula to (\ref{mss}), in the Schur category,
any $n$-fold split can be expressed as a composition of tensor
products of two-fold splits $\xi_{\left(\begin{smallmatrix}a\\b\end{smallmatrix}\right)}$.
\item
If $A$ is an $n \times n$ monomial matrix, i.e., it has exactly one
non-zero entry in every row and column, we call $\xi_A$ a {\em generalized
  permutation}.
Letting $\lambda$ and $\mu$ be the row and column sums of $A$, so that
$A \in \Theta_{\lambda,\mu}$, we may also use the notation
\begin{equation}\label{genperm}
1_\lambda g = g 1_\mu := \xi_A
\end{equation}
where $g \in S_n$ is defined from $\lambda = g(\mu)$;
here we are using the left action of $S_n$ on $\Lambda(n,d)$ so
$g(\mu) = (\mu_{g^{-1}(1)}, \dots, \mu_{g^{-1}(n)})$.
In other words, $g$ is the permutation such that
$a_{g(1),1} = \mu_1,\dots,a_{g(n),n} = \mu_n$.
Given 
another permutation $h \in S_n$,
Schur's product rule implies that
\begin{equation}\label{genperm2}
1_\lambda (gh) =
 g 1_{\mu} \circ 1_\mu h
= (gh) 1_\nu
\end{equation}
for $\mu = h(\nu)$.
This may also be deduced as a special case of the following lemma.
\end{itemize}

\begin{lemma}\label{harderthanyouthink}
Suppose that $A \in \Theta_{\lambda,\mu}$ and $B \in \Theta_{\mu,\nu}$
for $\lambda,\mu,\nu\Vdash d$.
Assume:
\begin{itemize}
\item
$A$ has a unique non-zero entry in every column,
so that there is an associated function $\alpha:\nset{\ell(\mu)}\rightarrow \nset{\ell(\lambda)}$
sending $i$ to the unique $j$ such that $a_{j,i} \neq 0$;
\item
$B$ has a unique non-zero entry in every row,
 so that there is an associated function
$\beta:\nset{\ell(\mu)} \rightarrow \nset{\ell(\nu)}$
sending $i$ to the unique $j$ such that $b_{i,j} \neq 0$;
\item
the function
$\gamma:\nset{\ell(\mu)}\rightarrow\nset{\ell(\lambda)}\times\nset{\ell(\nu)},
i\mapsto (\alpha(i),\beta(i))$ is injective.
\end{itemize}
Then $\xi_A \circ \xi_B = \xi_C$
where $C \in \Theta_{\lambda,\nu}$ is the matrix with 
$c_{\alpha(i),\beta(i)}=\mu_i$ for $i \in \nset{\ell(\mu)}$, all other entries being zero.
\end{lemma}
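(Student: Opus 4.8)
The plan is to expand the composite by Schur's product rule (\ref{schurs}), giving $\xi_A \circ \xi_B = \sum_{C' \in \Theta_{\lambda,\nu}} Z(A,B,C')\, \xi_{C'}$, and then to pin down every structure constant $Z(A,B,C')$ using Lemma~\ref{tricky}. To invoke that lemma I need a single triple $(\bi,\bj,\bk)$ with $(\bi,\bj)\in\Pi_A$, $(\bj,\bk)\in\Pi_B$ and $\Stab_{S_d}(\bi)\cap\Stab_{S_d}(\bk)=\Stab_{S_d}(\bj)$; the lemma then says $Z(A,B,C')=1$ exactly when $(\bi,\bk)\in\Pi_{C'}$ and is $0$ otherwise, so $\xi_A\circ\xi_B=\xi_C$ where $C$ is the matrix of the orbit $\Pi_C\ni(\bi,\bk)$, and it only remains to read off the entries of $C$.

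First I would construct the triple. Choose any $\bj\in\I_\mu$ and set $\bi:=\alpha\circ\bj$ and $\bk:=\beta\circ\bj$ entrywise, i.e. $i_r=\alpha(j_r)$ and $k_r=\beta(j_r)$. A short count, using that the unique non-zero entry of column $i$ of $A$ is the column sum $\mu_i$ (and symmetrically that the unique non-zero entry of row $i$ of $B$ is $\mu_i$), shows $\bi\in\I_\lambda$, $\bk\in\I_\nu$, $(\bi,\bj)\in\Pi_A$ and $(\bj,\bk)\in\Pi_B$.

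Next I would verify the stabilizer identity, which is the real heart of the argument. Since $\bi$ and $\bk$ are obtained from $\bj$ by applying a fixed function entrywise, any permutation fixing $\bj$ fixes both, so $\Stab_{S_d}(\bj)\subseteq\Stab_{S_d}(\bi)\cap\Stab_{S_d}(\bk)$. Conversely, if $g$ fixes $\bi$ and $\bk$, then for every position $r$ we have $\gamma(j_{g(r)})=(\alpha(j_{g(r)}),\beta(j_{g(r)}))=(i_{g(r)},k_{g(r)})=(i_r,k_r)=\gamma(j_r)$, and injectivity of $\gamma$ forces $j_{g(r)}=j_r$, i.e. $g\in\Stab_{S_d}(\bj)$. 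This is the only place the injectivity hypothesis enters, and it is exactly what Lemma~\ref{tricky} needs; everything else is bookkeeping, so I do not expect any further obstacle.

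Finally, applying Lemma~\ref{tricky}, $\xi_A\circ\xi_B=\xi_C$ where $C$ is the matrix of the $S_d$-orbit of $(\bi,\bk)$, so its $(i,j)$-entry is $\#\{r : (\alpha(j_r),\beta(j_r))=(i,j)\}=\#\{r : \gamma(j_r)=(i,j)\}$. Since $\gamma$ is injective and $\#\{r : j_r=i'\}=\mu_{i'}$, this count equals $\mu_{i'}$ when $(i,j)=\gamma(i')=(\alpha(i'),\beta(i'))$ for the (unique) such $i'$, and $0$ otherwise; that is precisely $c_{\alpha(i),\beta(i)}=\mu_i$ for all $i\in\nset{\ell(\mu)}$ with all other entries zero, as claimed. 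I would also remark that (\ref{genperm2}) is the special case where $A$ and $B$ are monomial matrices.
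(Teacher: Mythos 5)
Your proof is correct and follows essentially the same route as the paper's: construct $\bi=\alpha\circ\bj$, $\bk=\beta\circ\bj$ from a choice of $\bj\in\I_\mu$, verify the stabilizer condition via injectivity of $\gamma$, and invoke Lemma~\ref{tricky}. The only cosmetic difference is that the paper fixes the specific $\bj=(1^{\mu_1},\dots,n^{\mu_n})$, while you allow any $\bj\in\I_\mu$ and spell out the verifications in slightly more detail.
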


\begin{proof}
Let $n
:= \ell(\mu)$ and 
$\bj := (1^{\mu_1},2^{\mu_2}, \dots, n^{\mu_n})$.
Let $\bi := \alpha(\bj)$ and $\bk := \beta(\bj)$, i.e., these are the tuples
obtained by applying the functions $\alpha$ and $\beta$ to the entries of $\bj$.
Then we have that $(\bi,\bj) \in \Pi_A, (\bj,\bk) \in \Pi_B$, and
$(\bi, \bk) \in \Pi_C$.
Moreover, the injectivity of $\gamma$ implies that
$\Stab_{S_d}(\bi) \cap \Stab_{S_d}(\bk) = \Stab_{S_d}(\bj)$. Now apply Lemma~\ref{tricky}.
\end{proof}

Now suppose that $A \in \Theta_{\lambda,\mu}$ for
$\lambda, \mu \Vdash d$.
\begin{itemize}
\item
Let $A^-$ be the block diagonal
matrix $\diag(A_1,\dots,A_{\ell(\lambda)})$ where $A_i$ is the 
$1 \times n_i$
matrix obtained from the $i$th row of $A$ by removing all entries $0$.
Note that
\begin{equation}\label{dare}
\xi_{A^-} = \xi_{A_1} \otimes \cdots \otimes \xi_{A_{\ell(\lambda)}},
\end{equation}
with each $\xi_{A_i}$ being an $n_i$-fold merge.
Also let $\lambda^-$ be the composition recording the column sums of
$A^-$, so that $A^- \in \Theta_{\lambda,\lambda^-}$.
The $i$th entry $\lambda^-_i$ of $\lambda^-$
is the $i$th
{non-zero} entry
of the sequence
$a_{1,1},a_{1,2},\dots, a_{1,\ell(\mu)}, a_{2,1},\dots$
that is the {\em row reading} of the matrix $A$.
\item
Let $A^+$ be the block diagonal matrix
$\diag(A^1,\dots,A^{\ell(\mu)})$ where $A^i$ is the 
$n^i \times 1$ matrix obtained from the $i$th column of $A$ by removing all entries
$0$.
We then have that 
\begin{equation}\label{dare2}
\xi_{A^+} = \xi_{A^1} \otimes \cdots \otimes \xi_{A^{\ell(\mu)}},
\end{equation}
with each $\xi_{A^i}$ being an $n^i$-fold split.
Also let $\mu^+$ be the composition recording the row sums of $A^+$, so 
that $A^+ \in \Theta_{\mu^+,\mu}$.
The $i$th entry $\mu^+_i$ of $\mu^+$ is the $i$th {non-zero}
entry of the sequence $a_{1,1}, a_{2,1},\dots,a_{\ell(\lambda),1},
a_{1,2},\dots$
that is the {\em column reading} of $A$.
\item 
The composition 
$\lambda^-$ is a rearrangement of $\mu^+$, in particular, $n :=
\ell(\lambda^-) = \ell(\mu^+)$.
Let $f_1:\nset{n} \rightarrow \nset{\ell(\lambda)}$ and $f_2:\nset{n}\rightarrow \nset{\ell(\mu)}$ 
be defined so that $\lambda_i^-$, the $i$th non-zero entry
of the row reading of $A$, is in row $f_1(i)$ and column $f_2(i)$.
Let
$h_1:\nset{n} \rightarrow \nset{\ell(\lambda)}$ and $h_2:\nset{n} \rightarrow \nset{\ell(\mu)}$ be 
defined so that $\mu_i^+$, the $i$th non-zero entry of the
column reading of $A$, is in row $h_1(i)$ and column $h_2(i)$.
There is then a unique permutation $g \in S_n$ such that
$(f_1(g(i)), f_2(g(i))) = (h_1(i), h_2(i))$ for each $i\in\nset{n}$.
We have in particular that
$g(\mu^+) = \lambda^-$. Let $A^\circ \in \Theta_{\lambda^-,\mu^+}$ 
be the $n \times n$ monomial matrix with $(g(i),i)$-entry equal to $\mu^+_i$ for
$i=1,\dots,n$, 
all other entries being zero.
We have that
\begin{equation}\label{dare3} 
\xi_{A^\circ} = g 1_{\mu^+},
\end{equation}
notation as in (\ref{genperm}).
\end{itemize}
For example, suppose that $A=
\begin{pmatrix} 1&0&3\\2&2&1\end{pmatrix}$, 
so $\lambda = (4,5)$ and $\mu = (3,2,4)$.
Then
\begin{align}
A^- &= \begin{pmatrix}1 & 3 & 0 & 0 & 0\\0 & 0 & 2 & 2 &
  1\end{pmatrix},&
A^\circ &= \begin{pmatrix}
1&0&0&0&0\\
0&0&0&3&0\\
0&2&0&0&0\\
0&0&2&0&0\\
0&0&0&0&1
\end{pmatrix},&
 A^+ &= \begin{pmatrix}1 & 0 & 0 \\ 2 & 0 & 0 \\ 0 & 2 & 0\\ 0 &
  0 & 3\\ 0 & 0 & 1\end{pmatrix}.\label{ducks}
\end{align}
Also
$\lambda^- = (1,3,2,2,1)$
and $\mu^+ = (1,2,2,3,1)$,
so that $\xi_{A^\circ} = g 1_{\mu^+}$ where $g = (2\:3\:4)$;
see also (\ref{firstex}) below for a helpful picture of this situation.

\begin{lemma}\label{fixed}
For $A \in \Theta_{\lambda,\mu}$, we have that
$\xi_A = \xi_{A^-} \circ \xi_{A^\circ} \circ \xi_{A^+}$.
\end{lemma}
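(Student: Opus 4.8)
The plan is to obtain the factorization by applying Lemma~\ref{harderthanyouthink} twice, composing the three displayed morphisms from the inside out. First observe that everything is defined over the same Hom-space: $A^+\in\Theta_{\mu^+,\mu}$, $A^\circ\in\Theta_{\lambda^-,\mu^+}$ and $A^-\in\Theta_{\lambda,\lambda^-}$, so $\xi_{A^-}\circ\xi_{A^\circ}\circ\xi_{A^+}\in\Hom_{\Schur}(\mu,\lambda)$, the same space as $\xi_A$.

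Step one: compute $\xi_{A^\circ}\circ\xi_{A^+}$. Since $A^\circ$ is the monomial matrix of \eqref{dare3}, it has a unique non-zero entry in every column, and the associated function of Lemma~\ref{harderthanyouthink} is $\alpha=g$ (viewed as a map $\nset{n}\to\nset{n}$). Since $A^+$ is block diagonal with each block a single column by \eqref{dare2}, it has a unique non-zero entry in every row, the associated function being $\beta=h_2$. As $g$ is a permutation, $i\mapsto(g(i),h_2(i))$ is injective, so Lemma~\ref{harderthanyouthink} applies and gives $\xi_{A^\circ}\circ\xi_{A^+}=\xi_D$, where $D\in\Theta_{\lambda^-,\mu}$ has $d_{g(i),h_2(i)}=\mu^+_i$ for $i\in\nset{n}$ and all other entries zero.

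Step two: compute $\xi_{A^-}\circ\xi_D$. Since $A^-$ is block diagonal with each block a single row by \eqref{dare}, it has a unique non-zero entry in every column, with associated function $k\mapsto f_1(k)$. The matrix $D$ has a unique non-zero entry in every row: the row indices $g(i)$ for $i\in\nset{n}$ exhaust $\nset{n}$, and row $g(i)$ carries its single entry in column $h_2(i)$, so the function associated to $D$ sends the row $k$ to $h_2(g^{-1}(k))$. The defining property $(f_1(g(i)),f_2(g(i)))=(h_1(i),h_2(i))$ of $g$ gives $h_2\circ g^{-1}=f_2$, so this function is just $k\mapsto f_2(k)$, and hence the relevant $\gamma$ is $k\mapsto(f_1(k),f_2(k))$, which is injective because distinct $k$ record distinct positions in the row reading of $A$. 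Thus Lemma~\ref{harderthanyouthink} applies again and gives $\xi_{A^-}\circ\xi_D=\xi_C$ with $C\in\Theta_{\lambda,\mu}$ having $c_{f_1(k),f_2(k)}=\lambda^-_k$ and all other entries zero. But $\lambda^-_k=a_{f_1(k),f_2(k)}$ by definition of the row reading, and as $k$ runs over $\nset{n}$ the pairs $(f_1(k),f_2(k))$ run over exactly the supports of the non-zero entries of $A$; therefore $C=A$, and combining the two steps yields $\xi_{A^-}\circ\xi_{A^\circ}\circ\xi_{A^+}=\xi_A$.

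The only real work is in step two: checking that $D$ has a unique non-zero entry in every row and, more importantly, the bookkeeping identity $h_2\circ g^{-1}=f_2$ relating the row-reading and column-reading data of $A$ through $g$. Once that identity is in hand the injectivity hypotheses of Lemma~\ref{harderthanyouthink} are automatic and the output matrices are forced to be $D$ and then $A$. (Alternatively one could argue directly from \eqref{cats}, interpreting $\xi_{A^+}$, $\xi_{A^\circ}$, $\xi_{A^-}$ as a column-split, a place permutation, and a row-merge, and tracking the corresponding $S_d$-orbits on a weight-$\mu$ basis vector $v_{\bj}$; but this reduces to the same combinatorics.)
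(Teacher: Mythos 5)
Your proof is correct and follows essentially the same route as the paper: two applications of Lemma~\ref{harderthanyouthink}, first to $\xi_{A^\circ}\circ\xi_{A^+}$ and then to the result composed with $\xi_{A^-}$. In fact your computation that $\beta=h_2\circ g^{-1}=f_2$ in the second application is slightly more careful than the paper, which states this function as ``$g\circ h_2$'' (an apparent typo, since that does not even typecheck); your identification via $f_2\circ g=h_2$ is the clean way to see that the output matrix is $A$.
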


\begin{proof}
Define $n, \lambda^-, \mu^+$ and $f_1,f_2,g,h_1,h_2$ as above.
First, we 
apply Lemma~\ref{harderthanyouthink} 
with $\alpha = g$ and $\beta = h_2$
to deduce that $\xi_{A^\circ} \circ \xi_{A^+} = \xi_B$
for $B \in \Theta_{\lambda^-,\mu}$ defined so that
$b_{g(i), h_2(i)} = \mu^+_i$ for $i=1,\dots,n$, all other entries
being zero.
Then apply it again with $\alpha = f_1$ and $\beta = g \circ h_2$ 
to show that $\xi_{A^-} \circ \xi_B = \xi_A$.
\end{proof}

Lemma~\ref{fixed} shows that any $\xi_A$ can be expressed as the
vertical composition of some tensor product of merges, 
a generalized permutation, and some tensor product of splits. 
This statement is very natural from the diagrammatic point of view
which we are going to explain next. 

In fact, we are going to prove that $\Schur$
is isomorphic to a version of the web category 
from \cite[$\S$5]{CKM}\footnote{This extended work of G. Kuperberg to whom the reference to
spiders is credited.} for polynomial representations of the
general linear group, but in the stable limit as the rank tends to
infinity. This stable version, which is well known to the experts,
is easier than the finite rank version in \cite{CKM}
since one can exploit the connection to $\Schur$ and the defining basis for morphism spaces in 
the latter category. We will explain this in detail below since it is hard to
extract from the existing literature.
See also Remark~\ref{tosay} which explains how to recover the finite
rank
cases (together with a natural basis for their morphism spaces) via this
approach.

\begin{definition}\label{webcat}
The {\em polynomial web category} $\Web$ is the strict monoidal category 
defined by generators and relations as follows.
Its objects are all strict compositions with tensor product being by
concatenation as in Definition~\ref{scat}. 
The one-part compositions $(a)$ for $a > 0$ give a family of
generating objects.
In string diagrams, we will represent the generating object $(a)$
as a string labeled by the {\em thickness} $a$,
and a general object $\lambda = (\lambda_1,\dots,\lambda_n)$
will be a sequence of strings of thicknesses
$\lambda_1,\dots,\lambda_n>0$ in order from left to right.
Then there are generating morphisms
\begin{align}\label{ms}
\begin{tikzpicture}[baseline = -.5mm]
	\draw[-,line width=1pt] (0.28,-.3) to (0.08,0.04);
	\draw[-,line width=1pt] (-0.12,-.3) to (0.08,0.04);
	\draw[-,line width=2pt] (0.08,.4) to (0.08,0);
        \node at (-0.22,-.4) {$\scriptstyle a$};
        \node at (0.35,-.4) {$\scriptstyle b$};
\end{tikzpicture} 
% = \xi_{\left(\begin{smallmatrix}a&       b\end{smallmatrix}\right)}&
&:(a,b) \rightarrow (a+b),&
\begin{tikzpicture}[baseline = -.5mm]
	\draw[-,line width=2pt] (0.08,-.3) to (0.08,0.04);
	\draw[-,line width=1pt] (0.28,.4) to (0.08,0);
	\draw[-,line width=1pt] (-0.12,.4) to (0.08,0);
        \node at (-0.22,.5) {$\scriptstyle a$};
        \node at (0.36,.5) {$\scriptstyle b$};
\end{tikzpicture}
% = \xi_{\left(\begin{smallmatrix}a\\       b\end{smallmatrix}\right)}&
&:(a+b)\rightarrow (a,b),
&\begin{tikzpicture}[baseline=-.5mm]
	\draw[-,thick] (-0.3,-.3) to (.3,.4);
	\draw[-,thick] (0.3,-.3) to (-.3,.4);
        \node at (0.3,-.4) {$\scriptstyle b$};
        \node at (-0.3,-.4) {$\scriptstyle a$};
\end{tikzpicture}
% = \xi_{\left(\begin{smallmatrix}0 & b\\ a 0\end{smallmatrix}\right)}
&:(a,b) \rightarrow (b,a)
\end{align}
for $a,b > 0$, which we call the two-fold merge, the two-fold split,
and the thick crossing, respectively.
The generating morphisms are subject to the following relations
for $a,b,c,d > 0$ with $d-a=c-b$:
\begin{align}
\label{assrel}
\begin{tikzpicture}[baseline = 0]
	\draw[-,thick] (0.35,-.3) to (0.08,0.14);
	\draw[-,thick] (0.1,-.3) to (-0.04,-0.06);
	\draw[-,line width=1pt] (0.085,.14) to (-0.035,-0.06);
	\draw[-,thick] (-0.2,-.3) to (0.07,0.14);
	\draw[-,line width=2pt] (0.08,.45) to (0.08,.1);
        \node at (0.45,-.41) {$\scriptstyle c$};
        \node at (0.07,-.4) {$\scriptstyle b$};
        \node at (-0.28,-.41) {$\scriptstyle a$};
\end{tikzpicture}
&=
\begin{tikzpicture}[baseline = 0]
	\draw[-,thick] (0.36,-.3) to (0.09,0.14);
	\draw[-,thick] (0.06,-.3) to (0.2,-.05);
	\draw[-,line width=1pt] (0.07,.14) to (0.19,-.06);
	\draw[-,thick] (-0.19,-.3) to (0.08,0.14);
	\draw[-,line width=2pt] (0.08,.45) to (0.08,.1);
        \node at (0.45,-.41) {$\scriptstyle c$};
        \node at (0.07,-.4) {$\scriptstyle b$};
        \node at (-0.28,-.41) {$\scriptstyle a$};
\end{tikzpicture}\:,
\qquad
\begin{tikzpicture}[baseline = -1mm]
	\draw[-,thick] (0.35,.3) to (0.08,-0.14);
	\draw[-,thick] (0.1,.3) to (-0.04,0.06);
	\draw[-,line width=1pt] (0.085,-.14) to (-0.035,0.06);
	\draw[-,thick] (-0.2,.3) to (0.07,-0.14);
	\draw[-,line width=2pt] (0.08,-.45) to (0.08,-.1);
        \node at (0.45,.4) {$\scriptstyle c$};
        \node at (0.07,.42) {$\scriptstyle b$};
        \node at (-0.28,.4) {$\scriptstyle a$};
\end{tikzpicture}
=\begin{tikzpicture}[baseline = -1mm]
	\draw[-,thick] (0.36,.3) to (0.09,-0.14);
	\draw[-,thick] (0.06,.3) to (0.2,.05);
	\draw[-,line width=1pt] (0.07,-.14) to (0.19,.06);
	\draw[-,thick] (-0.19,.3) to (0.08,-0.14);
	\draw[-,line width=2pt] (0.08,-.45) to (0.08,-.1);
        \node at (0.45,.4) {$\scriptstyle c$};
        \node at (0.07,.42) {$\scriptstyle b$};
        \node at (-0.28,.4) {$\scriptstyle a$};
\end{tikzpicture}\:,\end{align}\begin{align}
\label{trivial}
\begin{tikzpicture}[anchorbase,scale=.8]
	\draw[-,line width=2pt] (0.08,-.8) to (0.08,-.5);
	\draw[-,line width=2pt] (0.08,.3) to (0.08,.6);
\draw[-,thick] (0.1,-.51) to [out=45,in=-45] (0.1,.31);
\draw[-,thick] (0.06,-.51) to [out=135,in=-135] (0.06,.31);
        \node at (-.33,-.05) {$\scriptstyle a$};
        \node at (.45,-.05) {$\scriptstyle b$};
\end{tikzpicture}
&= 
\binom{a+b}{a}\:\:
\begin{tikzpicture}[anchorbase,scale=.8]
	\draw[-,line width=2pt] (0.08,-.8) to (0.08,.6);
        \node at (.62,-.05) {$\scriptstyle a+b$};
\end{tikzpicture},\end{align}
\begin{align}
\label{mergesplit}
\begin{tikzpicture}[anchorbase,scale=1]
	\draw[-,line width=1.2pt] (0,0) to (.275,.3) to (.275,.7) to (0,1);
	\draw[-,line width=1.2pt] (.6,0) to (.315,.3) to (.315,.7) to (.6,1);
        \node at (0,1.13) {$\scriptstyle b$};
        \node at (0.63,1.13) {$\scriptstyle d$};
        \node at (0,-.1) {$\scriptstyle a$};
        \node at (0.63,-.1) {$\scriptstyle c$};
\end{tikzpicture}
&=
\sum_{\substack{0 \leq s \leq \min(a,b)\\0 \leq t \leq \min(c,d)\\t-s=d-a}}
\begin{tikzpicture}[anchorbase,scale=1]
	\draw[-,thick] (0.58,0) to (0.58,.2) to (.02,.8) to (.02,1);
	\draw[-,thick] (0.02,0) to (0.02,.2) to (.58,.8) to (.58,1);
	\draw[-,thin] (0,0) to (0,1);
	\draw[-,line width=1pt] (0.61,0) to (0.61,1);
        \node at (0,1.13) {$\scriptstyle b$};
        \node at (0.6,1.13) {$\scriptstyle d$};
        \node at (0,-.1) {$\scriptstyle a$};
        \node at (0.6,-.1) {$\scriptstyle c$};
        \node at (-0.1,.5) {$\scriptstyle s$};
        \node at (0.77,.5) {$\scriptstyle t$};
\end{tikzpicture}.
\end{align}
In diagrams for morphisms in $\Web$,
we often omit thickness labels on strings when
they are implicitly determined by the other labels. 
%Also, as usual in the string calculus, two diagrams are {\em equal} if
% they are equivalent under rectilinear isotopy. 
We have not defined any
morphisms that could be drawn as cups or caps, so the strings in 
these
diagrams have
singular points where crossings and splits/merges occur, but no
critical points of slope zero.
%In fact, there is a larger pivotal monoidal category 
%$\mathcal{W}eb$ of 
%{\em rational webs} in which strings are 
%oriented either upwards or downwards; 
%then $\Web$ is the full subcategory generated by the positive objects.
\end{definition}
 
The relation (\ref{assrel}) means that we can introduce more general 
{\em $n$-fold merges} and {\em $n$-fold splits} for $n \geq 2$ 
by composing the
two-fold ones in an obvious way (cf. (\ref{mss})). 
For example, the three-fold merges
and splits are defined from
\begin{align}
\begin{tikzpicture}[baseline = 0]
	\draw[-,thick] (0.35,-.3) to (0.09,0.14);
	\draw[-,thick] (0.08,-.3) to (.08,0.1);
	\draw[-,thick] (-0.2,-.3) to (0.07,0.14);
	\draw[-,line width=2pt] (0.08,.45) to (0.08,.1);
        \node at (0.45,-.41) {$\scriptstyle c$};
        \node at (0.07,-.4) {$\scriptstyle b$};
        \node at (-0.28,-.41) {$\scriptstyle a$};
\end{tikzpicture}
:=
\begin{tikzpicture}[baseline = 0]
	\draw[-,thick] (0.35,-.3) to (0.08,0.14);
	\draw[-,thick] (0.1,-.3) to (-0.04,-0.06);
	\draw[-,line width=1pt] (0.085,.14) to (-0.035,-0.06);
	\draw[-,thick] (-0.2,-.3) to (0.07,0.14);
	\draw[-,line width=2pt] (0.08,.45) to (0.08,.1);
        \node at (0.45,-.41) {$\scriptstyle c$};
        \node at (0.07,-.4) {$\scriptstyle b$};
        \node at (-0.28,-.41) {$\scriptstyle a$};
\end{tikzpicture}
&=
\begin{tikzpicture}[baseline = 0]
	\draw[-,thick] (0.36,-.3) to (0.09,0.14);
	\draw[-,thick] (0.06,-.3) to (0.2,-.05);
	\draw[-,line width=1pt] (0.07,.14) to (0.19,-.06);
	\draw[-,thick] (-0.19,-.3) to (0.08,0.14);
	\draw[-,line width=2pt] (0.08,.45) to (0.08,.1);
        \node at (0.45,-.41) {$\scriptstyle c$};
        \node at (0.07,-.4) {$\scriptstyle b$};
        \node at (-0.28,-.41) {$\scriptstyle a$};
\end{tikzpicture}\:,&
\begin{tikzpicture}[baseline = 0]
	\draw[-,thick] (0.35,.3) to (0.09,-0.14);
	\draw[-,thick] (0.08,.3) to (.08,-0.1);
	\draw[-,thick] (-0.2,.3) to (0.07,-0.14);
	\draw[-,line width=2pt] (0.08,-.45) to (0.08,-.1);
        \node at (0.45,.41) {$\scriptstyle c$};
        \node at (0.07,.43) {$\scriptstyle b$};
        \node at (-0.28,.41) {$\scriptstyle a$};
\end{tikzpicture}
:=
\begin{tikzpicture}[baseline = 0]
	\draw[-,thick] (0.35,.3) to (0.08,-0.14);
	\draw[-,thick] (0.1,.3) to (-0.04,0.06);
	\draw[-,line width=1pt] (0.085,-.14) to (-0.035,0.06);
	\draw[-,thick] (-0.2,.3) to (0.07,-0.14);
	\draw[-,line width=2pt] (0.08,-.45) to (0.08,-.1);
        \node at (0.45,.41) {$\scriptstyle c$};
        \node at (0.07,.43) {$\scriptstyle b$};
        \node at (-0.28,.41) {$\scriptstyle a$};
\end{tikzpicture}
&=
\begin{tikzpicture}[baseline = 0]
	\draw[-,thick] (0.36,.3) to (0.09,-0.14);
	\draw[-,thick] (0.06,.3) to (0.2,.05);
	\draw[-,line width=1pt] (0.07,-.14) to (0.19,.06);
	\draw[-,thick] (-0.19,.3) to (0.08,-0.14);
	\draw[-,line width=2pt] (0.08,-.45) to (0.08,-.1);
        \node at (0.45,.41) {$\scriptstyle c$};
        \node at (0.07,.43) {$\scriptstyle b$};
        \node at (-0.28,.41) {$\scriptstyle a$};
\end{tikzpicture}\:.
\end{align}
By the symmetry of Definition~\ref{webcat}, there are isomorphisms of strict
monoidal categories
\begin{align}
\mathtt{T}:\Web &\rightarrow \Web^{\operatorname{op}},&
\mathtt{R}:\Web &\rightarrow \Web^{\operatorname{rev}}
\end{align}
defined by reflecting diagrams in a horizontal or vertical axis,
respectively.

We will need various other relations which are consequences of the
defining relations. 
The proofs of these are elementary relation chases and will be explained in the appendix.
\begin{align}
\begin{tikzpicture}[anchorbase,scale=1]
	\draw[-,thick] (0,0) to (0,1);
	\draw[-,thick] (0.015,0) to (0.015,.2) to (.57,.4) to (.57,.6)
        to (.015,.8) to (.015,1);
	\draw[-,line width=1.2pt] (0.6,0) to (0.6,1);
        \node at (0.6,-.1) {$\scriptstyle b$};
        \node at (0,-.1) {$\scriptstyle a$};
        \node at (0.3,.82) {$\scriptstyle c$};
        \node at (0.3,.19) {$\scriptstyle d$};
\end{tikzpicture}
&=
\sum_{t=\max(0,c-b)}^{\min(c,d)}
\binom{a\!-\!d\!+\!t}{t}
\begin{tikzpicture}[anchorbase,scale=1]
	\draw[-,thin] (0,0) to (0,1);
	\draw[-,thick] (0.02,0) to (0.02,.2) to (.88,.8) to (.88,1);
	\draw[-,thick] (0.88,0) to (0.88,.2) to (.02,.8) to (.02,1);
	\draw[-,thin] (0.9,0) to (0.9,1);
        \node at (0,-.1) {$\scriptstyle a$};
        \node at (0.9,-.1) {$\scriptstyle b$};
        \node at (.33,.18) {$\scriptstyle d-t$};
        \node at (.33,.84) {$\scriptstyle c-t$};
\end{tikzpicture}=
\sum_{t=\max(0,c-b)}^{\min(c,d)}
\binom{a\!-\!b\!+\!c\!-\!d}{t}
\begin{tikzpicture}[anchorbase,scale=1]
%	\draw[-,thick] (0.01,0) to (0.01,1);
	\draw[-,line width=1.2pt] (0,0) to (0,1);
	\draw[-,thick] (0.79,0) to (0.79,.2) to (.03,.4) to (.03,.6)
        to (.79,.8) to (.79,1);
	\draw[-,thin] (0.81,0) to (0.81,1);
        \node at (0.8,-.1) {$\scriptstyle b$};
        \node at (0,-.1) {$\scriptstyle a$};
        \node at (0.38,.9) {$\scriptstyle d-t$};
        \node at (0.38,.13) {$\scriptstyle c-t$};
\end{tikzpicture},
\label{jonsquare}
\\
\begin{tikzpicture}[anchorbase,scale=1]
	\draw[-,thick] (0,0) to (0,1);
	\draw[-,thick] (-.015,0) to (-0.015,.2) to (-.57,.4) to (-.57,.6)
        to (-.015,.8) to (-.015,1);
	\draw[-,line width=1.2pt] (-0.6,0) to (-0.6,1);
        \node at (-0.6,-.1) {$\scriptstyle a$};
        \node at (0,-.1) {$\scriptstyle b$};
        \node at (-0.3,.84) {$\scriptstyle d$};
        \node at (-0.3,.19) {$\scriptstyle c$};
\end{tikzpicture}
&=
\sum_{t=\max(0,d-a)}^{\min(c,d)}
\binom{b\!-\!c\!+\!t}{t}
\begin{tikzpicture}[anchorbase,scale=1]
	\draw[-,thin] (0,0) to (0,1);
	\draw[-,thick] (0.02,0) to (0.02,.2) to (.88,.8) to (.88,1);
	\draw[-,thick] (0.88,0) to (0.88,.2) to (.02,.8) to (.02,1);
	\draw[-,thin] (0.9,0) to (0.9,1);
        \node at (0,-.1) {$\scriptstyle a$};
        \node at (.9,-.1) {$\scriptstyle b$};
        \node at (.33,.18) {$\scriptstyle c-t$};
        \node at (.33,.84) {$\scriptstyle d-t$};
\end{tikzpicture}=
\sum_{t=\max(0,d-a)}^{\min(c,d)}
\binom{b\!-\!a\!+\!d\!-\!c}{t}
\begin{tikzpicture}[anchorbase,scale=1]
	\draw[-,line width=1.2pt] (0,0) to (0,1);
	\draw[-,thick] (-0.8,0) to (-0.8,.2) to (-.03,.4) to (-.03,.6)
        to (-.8,.8) to (-.8,1);
	\draw[-,thin] (-0.82,0) to (-0.82,1);
        \node at (-0.81,-.1) {$\scriptstyle a$};
        \node at (0,-.1) {$\scriptstyle b$};
        \node at (-0.4,.9) {$\scriptstyle c-t$};
        \node at (-0.4,.13) {$\scriptstyle d-t$};
\end{tikzpicture},
\label{jonsquare2}
\end{align}\begin{align}
\label{thickcrossing}
\begin{tikzpicture}[anchorbase,scale=1]
	\draw[-,line width=1.2pt] (0,0) to (.6,1);
	\draw[-,line width=1.2pt] (0,1) to (.6,0);
        \node at (0,-.1) {$\scriptstyle a$};
        \node at (0.6,-0.1) {$\scriptstyle b$};
\end{tikzpicture}
&=
\begin{tikzpicture}[anchorbase,scale=1]
	\draw[-,line width=1.2pt] (0,0) to (.285,.3) to (.285,.7) to (0,1);
	\draw[-,line width=1.2pt] (.6,0) to (.315,.3) to (.315,.7) to (.6,1);
%        \node at (0,1.13) {$\scriptstyle b$};
%        \node at (0.6,1.13) {$\scriptstyle a$};
        \node at (0,-.1) {$\scriptstyle a$};
        \node at (0.6,-.1) {$\scriptstyle b$};
\end{tikzpicture}
\!-\!\sum_{t=1}^{\min(a,b)}
\begin{tikzpicture}[anchorbase,scale=1]
	\draw[-,thin] (0,0) to (0,1);
	\draw[-,thick] (0.02,0) to (0.02,.2) to (.58,.8) to (.58,1);
	\draw[-,thick] (0.58,0) to (0.58,.2) to (.02,.8) to (.02,1);
	\draw[-,thin] (0.6,0) to (0.6,1);
 %       \node at (0,1.13) {$\scriptstyle b$};
  %      \node at (.6,1.13) {$\scriptstyle a$};
        \node at (0,-.1) {$\scriptstyle a$};
        \node at (0.6,-.1) {$\scriptstyle b$};
        \node at (-0.1,.5) {$\scriptstyle t$};
        \node at (0.7,.5) {$\scriptstyle t$};
\end{tikzpicture}
=\sum_{t=0}^{\min(a,b)}
(-1)^t
\begin{tikzpicture}[anchorbase,scale=1]
	\draw[-,thick] (0,0) to (0,1);
	\draw[-,thick] (0.015,0) to (0.015,.2) to (.57,.4) to (.57,.6)
        to (.015,.8) to (.015,1);
	\draw[-,line width=1.2pt] (0.6,0) to (0.6,1);
        \node at (0.6,-.1) {$\scriptstyle b$};
        \node at (0,-.1) {$\scriptstyle a$};
        \node at (-0.1,.5) {$\scriptstyle t$};
\end{tikzpicture}
=\sum_{t=0}^{\min(a,b)}
(-1)^t
\begin{tikzpicture}[anchorbase,scale=1]
	\draw[-,thick] (0,0) to (0,1);
	\draw[-,thick] (-.015,0) to (-0.015,.2) to (-.57,.4) to (-.57,.6)
        to (-.015,.8) to (-.015,1);
	\draw[-,line width=1.2pt] (-0.6,0) to (-0.6,1);
        \node at (-0.6,-.1) {$\scriptstyle a$};
        \node at (0,-.1) {$\scriptstyle b$};
        \node at (0.13,.5) {$\scriptstyle t$};
\end{tikzpicture},\end{align}
\begin{align}\label{serre}
2\!\begin{tikzpicture}[anchorbase,scale=1]
  \draw[-,thick](0,0) to (0,1);
  \draw[-,thick](0.5,0) to (0.5,1);
  \draw[-,thick](1,0) to (1,1);
  \draw[-,thin](0.02,0) to (0.02,.65) to (0.48,.85) to (0.48,1);
  \draw[-,thin](0.03,0) to (0.03,.1) to (0.48,.3) to (0.48,.5) to
  (0.98,.7) to (.98,1);
        \node at (0,-.1) {$\scriptstyle a+2$};
        \node at (.5,-.1) {$\scriptstyle b$};
        \node at (1,-.1) {$\scriptstyle c$};
        \node at (0,1.1) {$\scriptstyle a$};
        \node at (.4,1.1) {$\scriptstyle b+1$};
        \node at (1,1.1) {$\scriptstyle c+1$};
\end{tikzpicture}
\!=
\!\begin{tikzpicture}[anchorbase,scale=1]
  \draw[-,thick](0,0) to (0,1);
  \draw[-,line width=.5pt](0.5,0) to (0.5,1);
  \draw[-,thick](1,0) to (1,1);
  \draw[-,thin](0.02,0) to (0.02,.57) to (0.48,.77) to (0.48,1);
  \draw[-,thin](0.03,0) to (0.03,.3) to (0.49,.5) to (0.49,1);
  \draw[-,thin](0.49,0) to (0.49,.2) to (0.98,.4) to (0.98,1);
        \node at (0,-.1) {$\scriptstyle a+2$};
        \node at (.5,-.1) {$\scriptstyle b$};
        \node at (1,-.1) {$\scriptstyle c$};
        \node at (0,1.1) {$\scriptstyle a$};
        \node at (.4,1.1) {$\scriptstyle b+1$};
        \node at (1,1.1) {$\scriptstyle c+1$};
\end{tikzpicture}\!\!\!+\!\!\!
\begin{tikzpicture}[anchorbase,scale=1]
  \draw[-,thick](0,0) to (0,1);
  \draw[-,line width=.5pt](0.5,0) to (0.5,1);
  \draw[-,thick](1,0) to (1,1);
  \draw[-,thin](0.02,0) to (0.02,.33) to (0.48,.53) to (0.48,1);
  \draw[-,thin](0.03,0) to (0.03,.05) to (0.49,.25) to (0.49,1);
  \draw[-,thin](0.51,0) to (0.51,.75) to (0.98,.95) to (0.98,1);
        \node at (0,-.1) {$\scriptstyle a+2$};
        \node at (.5,-.1) {$\scriptstyle b$};
        \node at (1,-.1) {$\scriptstyle c$};
        \node at (0,1.1) {$\scriptstyle a$};
        \node at (.4,1.1) {$\scriptstyle b+1$};
        \node at (1,1.1) {$\scriptstyle c+1$};
\end{tikzpicture}\!\!,
\:\quad
2\!\begin{tikzpicture}[anchorbase,scale=1]
  \draw[-,thick](0,0) to (0,1);
  \draw[-,thick](-0.5,0) to (-0.5,1);
  \draw[-,thick](-1,0) to (-1,1);
  \draw[-,thin](-0.02,0) to (-0.02,.65) to (-0.48,.85) to (-0.48,1);
  \draw[-,thin](-0.03,0) to (-0.03,.1) to (-0.48,.3) to (-0.48,.5) to
  (-0.98,.7) to (-.98,1);
        \node at (0,-.1) {$\scriptstyle c+2$};
        \node at (-.5,-.1) {$\scriptstyle b$};
        \node at (-1,-.1) {$\scriptstyle a$};
        \node at (-0,1.1) {$\scriptstyle c$};
        \node at (-.4,1.1) {$\scriptstyle b+1$};
        \node at (-1,1.1) {$\scriptstyle a+1$};
\end{tikzpicture}
\!=
\!\begin{tikzpicture}[anchorbase,scale=1]
  \draw[-,thick](0,0) to (0,1);
  \draw[-,line width=.5pt](-0.5,0) to (-0.5,1);
  \draw[-,thick](-1,0) to (-1,1);
  \draw[-,thin](-0.02,0) to (-0.02,.57) to (-0.48,.77) to (-0.48,1);
  \draw[-,thin](-0.03,0) to (-0.03,.3) to (-0.49,.5) to (-0.49,1);
  \draw[-,thin](-0.49,0) to (-0.49,.2) to (-0.98,.4) to (-0.98,1);
        \node at (0,-.1) {$\scriptstyle c+2$};
        \node at (-.5,-.1) {$\scriptstyle b$};
        \node at (-1,-.1) {$\scriptstyle a$};
        \node at (-0,1.1) {$\scriptstyle c$};
        \node at (-.4,1.1) {$\scriptstyle b+1$};
        \node at (-1,1.1) {$\scriptstyle a+1$};
\end{tikzpicture}\!\!\!+\!\!\!
\begin{tikzpicture}[anchorbase,scale=1]
  \draw[-,thick](0,0) to (0,1);
  \draw[-,line width=.5pt](-0.5,0) to (-0.5,1);
  \draw[-,thick](-1,0) to (-1,1);
  \draw[-,thin](-0.02,0) to (-0.02,.33) to (-0.48,.53) to (-0.48,1);
  \draw[-,thin](-0.03,0) to (-0.03,.05) to (-0.49,.25) to (-0.49,1);
  \draw[-,thin](-0.51,0) to (-0.51,.75) to (-0.98,.95) to (-0.98,1);
        \node at (0,-.1) {$\scriptstyle c+2$};
        \node at (-.5,-.1) {$\scriptstyle b$};
        \node at (-1,-.1) {$\scriptstyle a$};
        \node at (-0,1.1) {$\scriptstyle c$};
        \node at (-.4,1.1) {$\scriptstyle b+1$};
        \node at (-1,1.1) {$\scriptstyle a+1$};
\end{tikzpicture}\!\!,
\end{align}
\begin{align}
\begin{tikzpicture}[anchorbase,scale=.7]
	\draw[-,line width=2pt] (0.08,.3) to (0.08,.5);
\draw[-,thick] (-.2,-.8) to [out=45,in=-45] (0.1,.31);
\draw[-,thick] (.36,-.8) to [out=135,in=-135] (0.06,.31);
        \node at (-.3,-.95) {$\scriptstyle a$};
        \node at (.45,-.95) {$\scriptstyle b$};
\end{tikzpicture}
&=
\begin{tikzpicture}[anchorbase,scale=.7]
	\draw[-,line width=2pt] (0.08,.1) to (0.08,.5);
\draw[-,thick] (.46,-.8) to [out=100,in=-45] (0.1,.11);
\draw[-,thick] (-.3,-.8) to [out=80,in=-135] (0.06,.11);
        \node at (-.3,-.95) {$\scriptstyle a$};
        \node at (.43,-.95) {$\scriptstyle b$};
\end{tikzpicture},
\qquad
\begin{tikzpicture}[baseline=.3mm,scale=.7]
	\draw[-,line width=2pt] (0.08,-.3) to (0.08,-.5);
\draw[-,thick] (-.2,.8) to [out=-45,in=45] (0.1,-.31);
\draw[-,thick] (.36,.8) to [out=-135,in=135] (0.06,-.31);
        \node at (-.3,.95) {$\scriptstyle a$};
        \node at (.45,.95) {$\scriptstyle b$};
\end{tikzpicture}
=
\begin{tikzpicture}[baseline=.3mm,scale=.7]
	\draw[-,line width=2pt] (0.08,-.1) to (0.08,-.5);
\draw[-,thick] (.46,.8) to [out=-100,in=45] (0.1,-.11);
\draw[-,thick] (-.3,.8) to [out=-80,in=135] (0.06,-.11);
        \node at (-.3,.95) {$\scriptstyle a$};
        \node at (.43,.95) {$\scriptstyle b$};
\end{tikzpicture},
\label{swallows}\end{align}\begin{align}
\begin{tikzpicture}[anchorbase,scale=0.7]
	\draw[-,thick] (0.4,0) to (-0.6,1);
	\draw[-,thick] (0.08,0) to (0.08,1);
	\draw[-,thick] (0.1,0) to (0.1,.6) to (.5,1);
        \node at (0.6,1.13) {$\scriptstyle c$};
        \node at (0.1,1.16) {$\scriptstyle b$};
        \node at (-0.65,1.13) {$\scriptstyle a$};
\end{tikzpicture}
&\!\!=\!\!
\begin{tikzpicture}[anchorbase,scale=0.7]
	\draw[-,thick] (0.7,0) to (-0.3,1);
	\draw[-,thick] (0.08,0) to (0.08,1);
	\draw[-,thick] (0.1,0) to (0.1,.2) to (.9,1);
        \node at (0.9,1.13) {$\scriptstyle c$};
        \node at (0.1,1.16) {$\scriptstyle b$};
        \node at (-0.4,1.13) {$\scriptstyle a$};
\end{tikzpicture},\:
\begin{tikzpicture}[anchorbase,scale=0.7]
	\draw[-,thick] (-0.4,0) to (0.6,1);
	\draw[-,thick] (-0.08,0) to (-0.08,1);
	\draw[-,thick] (-0.1,0) to (-0.1,.6) to (-.5,1);
        \node at (0.7,1.13) {$\scriptstyle c$};
        \node at (-0.1,1.16) {$\scriptstyle b$};
        \node at (-0.6,1.13) {$\scriptstyle a$};
\end{tikzpicture}
\!\!=\!\!
\begin{tikzpicture}[anchorbase,scale=0.7]
	\draw[-,thick] (-0.7,0) to (0.3,1);
	\draw[-,thick] (-0.08,0) to (-0.08,1);
	\draw[-,thick] (-0.1,0) to (-0.1,.2) to (-.9,1);
        \node at (0.4,1.13) {$\scriptstyle c$};
        \node at (-0.1,1.16) {$\scriptstyle b$};
        \node at (-0.95,1.13) {$\scriptstyle a$};
\end{tikzpicture},
\:\begin{tikzpicture}[baseline=-3.3mm,scale=0.7]
	\draw[-,thick] (0.4,0) to (-0.6,-1);
	\draw[-,thick] (0.08,0) to (0.08,-1);
	\draw[-,thick] (0.1,0) to (0.1,-.6) to (.5,-1);
        \node at (0.6,-1.13) {$\scriptstyle c$};
        \node at (0.07,-1.13) {$\scriptstyle b$};
        \node at (-0.6,-1.13) {$\scriptstyle a$};
\end{tikzpicture}
\!\!=\!\!
\begin{tikzpicture}[baseline=-3.3mm,scale=0.7]
	\draw[-,thick] (0.7,0) to (-0.3,-1);
	\draw[-,thick] (0.08,0) to (0.08,-1);
	\draw[-,thick] (0.1,0) to (0.1,-.2) to (.9,-1);
        \node at (1,-1.13) {$\scriptstyle c$};
        \node at (0.1,-1.13) {$\scriptstyle b$};
        \node at (-0.4,-1.13) {$\scriptstyle a$};
\end{tikzpicture},\:
\begin{tikzpicture}[baseline=-3.3mm,scale=0.7]
	\draw[-,thick] (-0.4,0) to (0.6,-1);
	\draw[-,thick] (-0.08,0) to (-0.08,-1);
	\draw[-,thick] (-0.1,0) to (-0.1,-.6) to (-.5,-1);
        \node at (0.6,-1.13) {$\scriptstyle c$};
        \node at (-0.1,-1.13) {$\scriptstyle b$};
        \node at (-0.6,-1.13) {$\scriptstyle a$};
\end{tikzpicture}
\!\!=\!\!
\begin{tikzpicture}[baseline=-3.3mm,scale=0.7]
	\draw[-,thick] (-0.7,0) to (0.3,-1);
	\draw[-,thick] (-0.08,0) to (-0.08,-1);
	\draw[-,thick] (-0.1,0) to (-0.1,-.2) to (-.9,-1);
        \node at (0.34,-1.13) {$\scriptstyle c$};
        \node at (-0.1,-1.13) {$\scriptstyle b$};
        \node at (-0.95,-1.13) {$\scriptstyle a$};
\end{tikzpicture},
\label{sliders}\end{align}\begin{align}
\label{symmetric}
\mathord{
\begin{tikzpicture}[baseline = -1mm,scale=0.8]
	\draw[-,thick] (0.28,0) to[out=90,in=-90] (-0.28,.6);
	\draw[-,thick] (-0.28,0) to[out=90,in=-90] (0.28,.6);
	\draw[-,thick] (0.28,-.6) to[out=90,in=-90] (-0.28,0);
	\draw[-,thick] (-0.28,-.6) to[out=90,in=-90] (0.28,0);
        \node at (0.3,-.75) {$\scriptstyle b$};
        \node at (-0.3,-.75) {$\scriptstyle a$};
\end{tikzpicture}
}&=
\mathord{
\begin{tikzpicture}[baseline = -1mm,scale=0.8]
	\draw[-,thick] (0.2,-.6) to (0.2,.6);
	\draw[-,thick] (-0.2,-.6) to (-0.2,.6);
        \node at (0.2,-.75) {$\scriptstyle b$};
        \node at (-0.2,-.75) {$\scriptstyle a$};
\end{tikzpicture}
}\:,\end{align}\begin{align}\label{braid}
\mathord{
\begin{tikzpicture}[baseline = -1mm,scale=0.8]
	\draw[-,thick] (0.45,.6) to (-0.45,-.6);
	\draw[-,thick] (0.45,-.6) to (-0.45,.6);
        \draw[-,thick] (0,-.6) to[out=90,in=-90] (-.45,0);
        \draw[-,thick] (-0.45,0) to[out=90,in=-90] (0,0.6);
        \node at (0,-.77) {$\scriptstyle b$};
        \node at (0.5,-.77) {$\scriptstyle c$};
        \node at (-0.5,-.77) {$\scriptstyle a$};
\end{tikzpicture}
}
&=
\mathord{
\begin{tikzpicture}[baseline = -1mm,scale=0.8]
	\draw[-,thick] (0.45,.6) to (-0.45,-.6);
	\draw[-,thick] (0.45,-.6) to (-0.45,.6);
        \draw[-,thick] (0,-.6) to[out=90,in=-90] (.45,0);
        \draw[-,thick] (0.45,0) to[out=90,in=-90] (0,0.6);
        \node at (0,-.77) {$\scriptstyle b$};
        \node at (0.5,-.77) {$\scriptstyle c$};
        \node at (-0.5,-.77) {$\scriptstyle a$};
\end{tikzpicture}
}\:.
\end{align}
The relations (\ref{sliders})--(\ref{braid}) imply that
$\Web$ has the structure of a strict symmetric monoidal
category, with symmetric braiding defined on generating objects by
the thick crossings.

\begin{remark}\label{mp}
In view of (\ref{thickcrossing}), the thick crossings can be
expressed in terms of the two-fold merges and splits, so they are
redundant as generators.
In fact, as will also be proved in the appendix,
$\Web$ is isomorphic to the strict monoidal category
with generators that are just the two-fold merges and splits, subject
to the relations (\ref{assrel}) and (\ref{trivial}) as before together
with the {\em square switch relations}
\begin{align}
\begin{tikzpicture}[anchorbase,scale=1]
	\draw[-,thick] (0,0) to (0,1);
	\draw[-,thick] (.015,0) to (0.015,.2) to (.57,.4) to (.57,.6)
        to (.015,.8) to (.015,1);
	\draw[-,line width=1.2pt] (0.6,0) to (0.6,1);
        \node at (0.6,-.1) {$\scriptstyle b$};
        \node at (0,-.1) {$\scriptstyle a$};
        \node at (0.3,.84) {$\scriptstyle c$};
        \node at (0.3,.19) {$\scriptstyle d$};
\end{tikzpicture}
&=
\sum_{t=\max(0,c-b)}^{\min(c,d)}
\binom{a\! -\!b\!+\!c\!-\!d}{t}
\begin{tikzpicture}[anchorbase,scale=1]
	\draw[-,line width=1.2pt] (0,0) to (0,1);
	\draw[-,thick] (0.8,0) to (0.8,.2) to (.03,.4) to (.03,.6)
        to (.8,.8) to (.8,1);
	\draw[-,thin] (0.82,0) to (0.82,1);
        \node at (0.81,-.1) {$\scriptstyle b$};
        \node at (0,-.1) {$\scriptstyle a$};
        \node at (0.4,.9) {$\scriptstyle d-t$};
        \node at (0.4,.13) {$\scriptstyle c-t$};
\end{tikzpicture},\label{sl2rel}\\
\label{sl2rel2}
\begin{tikzpicture}[anchorbase,scale=1]
	\draw[-,thick] (0,0) to (0,1);
	\draw[-,thick] (-.015,0) to (-0.015,.2) to (-.57,.4) to (-.57,.6)
        to (-.015,.8) to (-.015,1);
	\draw[-,line width=1.2pt] (-0.6,0) to (-0.6,1);
        \node at (-0.6,-.1) {$\scriptstyle a$};
        \node at (0,-.1) {$\scriptstyle b$};
        \node at (-0.3,.84) {$\scriptstyle d$};
        \node at (-0.3,.19) {$\scriptstyle c$};
\end{tikzpicture}
&=
\sum_{t=\max(0,d-a)}^{\min(c,d)}
\binom{b\! -\!a\!+\!d\!-\!c}{t}
\begin{tikzpicture}[anchorbase,scale=1]
	\draw[-,line width=1.2pt] (0,0) to (0,1);
	\draw[-,thick] (-0.8,0) to (-0.8,.2) to (-.03,.4) to (-.03,.6)
        to (-.8,.8) to (-.8,1);
	\draw[-,thin] (-0.82,0) to (-0.82,1);
        \node at (-0.81,-.1) {$\scriptstyle a$};
        \node at (0,-.1) {$\scriptstyle b$};
        \node at (-0.4,.9) {$\scriptstyle c-t$};
        \node at (-0.4,.13) {$\scriptstyle d-t$};
\end{tikzpicture},
\end{align}
which are as in (\ref{jonsquare})--(\ref{jonsquare2}) above.
This is the original presentation from
\cite[$\S$5]{CKM}, where
the square switch relations are interpreted in terms of the commutator
relation between the divided powers $e_i^{(c)}, f_i^{(d)}$.
From this perspective, the relations (\ref{serre}) come from the Serre relations.
Then the thick crossings get {\em defined}
from the formula
\begin{equation}
\begin{tikzpicture}[anchorbase,scale=1]
	\draw[-,line width=1.2pt] (0,0) to (.6,1);
	\draw[-,line width=1.2pt] (0,1) to (.6,0);
        \node at (0,-.1) {$\scriptstyle a$};
        \node at (0.6,-0.1) {$\scriptstyle b$};
\end{tikzpicture}
:=\sum_{t=0}^{\min(a,b)}
(-1)^t
\begin{tikzpicture}[anchorbase,scale=1]
	\draw[-,thick] (0,0) to (0,1);
	\draw[-,thick] (.015,0) to (0.015,.2) to (.57,.4) to (.57,.6)
        to (.015,.8) to (.015,1);
	\draw[-,line width=1.2pt] (0.6,0) to (0.6,1);
        \node at (0.6,-.1) {$\scriptstyle b$};
        \node at (0,-.1) {$\scriptstyle a$};
        \node at (-0.13,.5) {$\scriptstyle t$};
\end{tikzpicture}\label{askthink},
\end{equation}
which is \cite[Corollary 6.2.3]{CKM} (up to multiplication by the sign $(-1)^{ab}$ which also
appears in the statement of Theorem~\ref{dthm} below).
In \cite{CKM}, this formula is explained in terms of the action of the $i$th simple
reflection 
on the appropriate weight space of a polynomial representation of $GL_n$:
$s_i =
e_i^{(b)} f_i^{(a)} - e_i^{(b-1)} f_i^{(a-1)} + \cdots$.
\end{remark}

For $\lambda,\mu\Vdash d$,
a $\lambda \times \mu$ {\em chicken foot diagram}\footnote{This
  terminology was suggested to the first author by A. Kleshchev.}
is a diagram representing a morphism 
in $\Hom_{\Web}(\mu,\lambda)$ in which the thick strings determined by
$\mu$ at the bottom of the diagram split
into thinner strings,
then these thinner strings
cross each other in some way in
the middle of the diagram,
before merging back into the thick strings determined by $\lambda$ at the top.
This means that a chicken foot diagram has three distinct parts, the
top and bottom parts
which consist just of merges and splits, respectively, all of which occur
at the same horizontal level, and
the middle
part which is a generalized permutation diagram.
Here is an example with $\lambda = (4,5)$ and $\mu = (3,2,4)$:
\begin{align}\label{firstex}
  \begin{tikzpicture}[anchorbase,scale=1.5]
\draw[-,line width=.6mm] (.212,.5) to (.212,.39);
\draw[-,line width=.75mm] (.595,.5) to (.595,.39);
\draw[-,line width=.15mm] (0.0005,-.396) to (.2,.4);
\draw[-,line width=.3mm] (0.01,-.4) to (.59,.4);
\draw[-,line width=.3mm] (.4,-.4) to (.607,.4);
\draw[-,line width=.45mm] (.79,-.4) to (.214,.4);
\draw[-,line width=.15mm] (.8035,-.398) to (.614,.4);
\draw[-,line width=.3mm] (.4006,-.5) to (.4006,-.395);
\draw[-,line width=.6mm] (.788,-.5) to (.788,-.395);
\draw[-,line width=.45mm] (0.011,-.5) to (0.011,-.395);
\node at (0.05,0.15) {$\scriptstyle 1$};
\node at (0.79,0.05) {$\scriptstyle 1$};
\node at (0.35,0.37) {$\scriptstyle 3$};
\node at (0.17,-0.3) {$\scriptstyle 2$};
\node at (0.5,-0.3) {$\scriptstyle 2$};
\end{tikzpicture}.
\end{align}
We say that a chicken foot diagram is {\em reduced} if there is at
most one intersection or join between every pair of the
thinner strings in the diagram.
Thus, for each $i \in \nset{\ell(\lambda)}$ and $j \in \nset{\ell(\mu)}$,
there is at most one string connecting the $i$th vertex at
the top to the $j$th vertex at the bottom, and moreover
the generalized permutation diagram in the middle of the diagram
corresponds to a reduced word in the symmetric group.
The {\em type} of a reduced chicken foot diagram
is the matrix $A \in \Theta_{\lambda,\mu}$ whose $(i,j)$-entry
is the thickness of the unique string connecting the $i$th vertex
at the top to the $j$th vertex at the bottom, or zero if there is no
such string.
For example, (\ref{firstex}) is a reduced chicken foot diagram of
type $A =
\begin{pmatrix} 1&0&3\\2&2&1\end{pmatrix} \in \Theta_{\lambda,\mu}$,
and 
the top, middle and bottom
parts of (\ref{firstex}) are reduced chicken foot diagrams whose types
are given by the matrices $A^-$, $A^0$ and $A^+$ 
from (\ref{ducks}).

By the braid relations (\ref{braid}),
all reduced chicken foot diagrams of the same type $A \in \Theta_{\lambda,\mu}$
represent the same morphism $[A] \in \Hom_{\Web}(\mu,\lambda)$.
In fact, we are going to prove that these morphisms for all $A \in \Theta_{\lambda,\mu}$ give a
basis for space $\Hom_{\Web}(\mu,\lambda)$.
The fact that they span is established in the next lemma, which gives a straightening algorithm to
convert an arbitrary diagram for a morphism in $\Web$ into a linear
combination of reduced chicken foot diagrams.

\begin{lemma}\label{straightening}
The morphism space $\Hom_{\Web}(\mu,\lambda)$ is spanned 
by the morphisms $[A]$ for all $A \in \Theta_{\lambda,\mu}$.
\end{lemma}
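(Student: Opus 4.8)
The plan is to exhibit an explicit straightening algorithm and argue by induction on the number of generating morphisms in an elementary diagram; the main obstacle will be proving that the algorithm terminates. Since $\Web$ is generated as a strict monoidal category by the morphisms of Definition~\ref{webcat}, every morphism is a $\k$-linear combination of \emph{elementary diagrams}, meaning composites $g_1\circ\cdots\circ g_m$ in which each $g_i$ is a single two-fold merge, two-fold split, or thick crossing, tensored with identity morphisms. We induct on $m$. The case $m\le 1$ is immediate, as merges, splits, crossings and identities are themselves reduced chicken foot diagrams. For the inductive step we write an elementary diagram as $g\circ D'$ with $D'$ of length $m-1$; by induction $D'$ is a linear combination of reduced chicken foot diagrams $[A]$, so it is enough to show that $g\circ[A]$ lies in the span of the morphisms $[B]$ for every reduced chicken foot diagram $[A]$ and every generator $g$ (tensored with identities) placed on top of it.

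Recall from Lemma~\ref{fixed} that $[A]$ consists, read from bottom to top, of a \emph{splitting region}, a \emph{permutation region} made of thin crossings, and a \emph{merging region}. We first bring $g\circ[A]$ to chicken foot shape. If $g$ is a two-fold merge it attaches to the top of the merging region, and a composite of two-fold merges is again a forest of such, so nothing needs to be done. If $g$ is a thick crossing we slide it downward through the merging region using naturality of the braiding (relations \eqref{sliders}--\eqref{braid}), where it dissolves into thin crossings in the permutation region. The substantial case is $g$ a two-fold split: we push it downward, applying the merge-then-split relation \eqref{mergesplit} (equivalently a square switch \eqref{sl2rel}, \eqref{sl2rel2}) each time the split sits directly above a merge, which replaces it by a sum of diagrams in which a split has descended past that merge at the cost of one new thin crossing; once past the merging region we push the split on down through the permutation region using \eqref{sliders}; finally we absorb it into the splitting region using the associativity relations \eqref{assrel}. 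All crossings so created land in the permutation region, so $g\circ[A]$ is now a linear combination of (not necessarily reduced) chicken foot diagrams.

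The termination of this split-pushing procedure is the crux. The key point is that one must always push down a split that is \emph{topmost} among those not yet absorbed into the splitting region; then the relevant statistic is the lexicographically ordered pair $(N_1,N_2)$, where $N_1$ counts the incidences $(v,\sigma)$ with $v$ a merge vertex lying below an unabsorbed split $\sigma$ along a common descending strand, and $N_2$ is the analogous count with crossings in place of merges. Each application of \eqref{mergesplit} strictly decreases $N_1$: the incidence of $\sigma$ with the merge directly below it disappears, the incidences of $\sigma$ with merges further down are replaced one for one by incidences with the two newly created splits, and --- precisely because $\sigma$ was topmost --- the two newly created merges have no unabsorbed split above them. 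Each application of \eqref{sliders} leaves $N_1$ fixed and strictly decreases $N_2$. Since $N_1$ is finite, only finitely many splits and hence finitely many new crossings are ever produced, so $N_2$ stays finite; the process therefore terminates, with every unabsorbed split sitting at the very bottom where \eqref{assrel} absorbs it.

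It remains to rewrite an arbitrary chicken foot diagram as a linear combination of reduced ones. First apply the braid relation \eqref{braid} and the relation \eqref{symmetric} to replace the permutation word in the middle by a reduced word, so that every pair of thin strings crosses at most once; this is the standard reduction of a word in a symmetric group. Next, whenever two thin strings emanating from a common split-vertex cross, or two entering a common merge-vertex cross, use \eqref{assrel} to arrange that they branch from (respectively combine at) a single two-fold split (respectively merge), bring the crossing adjacent to it with \eqref{sliders} and \eqref{braid}, and absorb it with \eqref{swallows}, decreasing the number of crossings. Finally, whenever two distinct thin strings connect the same pair of top and bottom vertices, reorganize the splitting and merging regions with \eqref{assrel} and use the square-switch relations \eqref{jonsquare}--\eqref{jonsquare2} together with the trivial relation \eqref{trivial} to fuse them into a single string of the summed thickness, decreasing the number of thin strings; organizing this reorganization so as not to re-create crossings already removed is where most of the care is needed, although each individual move is elementary. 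Iterating with respect to the lexicographically ordered statistic (number of crossings, number of thin strings, length of the permutation word), we reach a reduced chicken foot diagram. Since all reduced chicken foot diagrams of a fixed type $A\in\Theta_{\lambda,\mu}$ represent the same morphism $[A]$ by the braid relations, it follows that $g\circ[A]$ lies in the span of the $[B]$'s, completing the induction and the proof.
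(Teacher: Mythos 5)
Your argument follows essentially the same route as the paper: decompose morphisms into elementary ones, push each newly added generator through a reduced chicken foot diagram, and then straighten. The paper streamlines this slightly by invoking Remark~\ref{mp} to dispense with thick crossings as generators (they are redundant since they can be written in terms of merges and splits via (\ref{thickcrossing})), whereas you handle them as a separate case by sliding them into the permutation region; either route is fine. Your main addition is the explicit lexicographic statistic $(N_1,N_2)$ for termination of the split-pushing procedure, a point the paper compresses into ``we rewrite \ldots as a sum of reduced chicken foot diagrams'' together with the illustrative formula (\ref{pots}) — the paper's tacit termination argument is that the computation at the $i$th vertex is a local one in a bounded diagram, while yours tracks a global well-founded measure; both are valid, and yours has the virtue of being spelled out. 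One small discrepancy: the paper asserts that a single non-reduced chicken foot diagram is a \emph{scalar multiple} of a reduced one (double-crossings and parallel pairs are eliminated by (\ref{symmetric}), (\ref{swallows}), (\ref{trivial}), which produce only scalars), whereas your reduction invokes the square switches (\ref{jonsquare})--(\ref{jonsquare2}) and yields a linear combination; this is harmless for the spanning statement but is a slightly weaker conclusion at that intermediate step, and you could avoid those relations entirely at that stage by organizing the fusion of parallel strands so that they meet at adjacent split and merge vertices before applying (\ref{trivial}).
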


\begin{proof}
We have observed already that $\Web$ is generated by its two-fold
merges and splits. Since these are themselves defined by
reduced chicken foot diagrams, it suffices to show 
for any morphism $f$
that consists of a two-fold
merge or a two-fold split
(tensored on the left and right by appropriate identity morphisms),
and any morphism $g$ defined by a reduced $\lambda \times \mu$ chicken foot diagram,
that the vertical
composition $f \circ g$ can be expressed as a linear combination of
reduced chicken foot diagrams.

Suppose first that $f$ involves a two-fold merge joining to the $i$th
and $(i+1)$th strings at the top of $g$. If $g$ has an $r$-fold merge
at its $i$th vertex and an $s$-fold merge at its $(i+1)$th vertex,
then we can use (\ref{assrel}) to rewrite $f \circ g$ so that it is a
$\lambda'\times \mu$ chicken foot diagram
with
an $(r+s)$-fold merge at its $i$th vertex, where
$\lambda'$ is the composition 
$(\lambda_1,\dots,\lambda_{i-1},\lambda_i+\lambda_{i+1},\lambda_{i+2},\dots,
\lambda_{\ell(\lambda)})$.
For example:
\begin{equation}\label{spots}
\begin{tikzpicture}[anchorbase]
	\draw[-,line width=2pt] (0.08,.3) to (0.08,-0.04);
	\draw[-,line width=1pt] (0.28,-.4) to (0.08,0);
	\draw[-,line width=1pt] (-0.12,-.4) to (0.08,0);
	\draw[-,thin] (-0.127,.-.39) to (-0.28,-.7);
	\draw[-,thin] (-0.118,.-.39) to (-0.118,-.7);
	\draw[-,thin] (-0.109,.-.39) to (0.04,-.7);
	\draw[-,line width=.5pt] (.28,.-.39) to (.15,-.7);
	\draw[-,line width=.5pt] (0.285,.-.39) to (0.4,-.7);
\node at (-0.7,.05) {$\scriptstyle f$};
\node at (-0.7,-0.5) {$\scriptstyle g$};
\end{tikzpicture}=
\begin{tikzpicture}[anchorbase]
	\draw[-,line width=2pt] (0.08,.3) to (0.08,-0.06);
	\draw[-,thin] (.053,.-.04) to (-0.32,-.7);
	\draw[-,thin] (.067,.-.04) to (-0.14,-.7);
	\draw[-,thin] (.081,.-.04) to (0.08,-.7);
	\draw[-,thin] (.095,.-.04) to (0.3,-.7);
	\draw[-,thin] (.108,.-.04) to (0.52,-.7);
\end{tikzpicture}.
\end{equation}
However the resulting chicken foot diagram is not necessarily reduced.
It remains to observe that the morphism defined by a 
non-reduced chicken foot diagram can be
converted to a scalar multiple of a morphism defined by a reduced one
just using
the relations (\ref{assrel})--(\ref{trivial}) and (\ref{swallows})--(\ref{braid}). 

Now suppose that $f$ involves a two-fold split joining to the $i$th
vertex at the top of $g$. Say this vertex of $g$ involves an $n$-fold merge.
Using (\ref{assrel}), (\ref{mergesplit}) and (\ref{sliders}), we
rewrite the composition of the split in $f$ and this merge in $g$ as a
sum of reduced chicken foot diagrams. For example:
\begin{equation}\label{pots}
\begin{tikzpicture}[anchorbase]
	\draw[-,line width=1.2pt] (-0.118,-.4) to (-0.118,-.18);
	\draw[-,thin] (-0.127,.-.39) to (-0.28,-.7);
	\draw[-,thin] (-0.118,.-.39) to (-0.118,-.7);
	\draw[-,thin] (-0.109,.-.39) to (0.04,-.7);
	\draw[-,line width=.8pt] (-.116,-.21) to (-.3,.1);
	\draw[-,line width=.8pt] (-0.12,-.21) to (0.05,.1);
\node at (-0.7,-.1) {$\scriptstyle f$};
\node at (-0.7,-0.54) {$\scriptstyle g$};
\end{tikzpicture}=
\sum\:
\begin{tikzpicture}[anchorbase]
	\draw[-,line width=.5pt] (0.089,.01) to (0.089,.04);
	\draw[-,line width=.5pt] (-0.089,.01) to (-0.089,.04);
	\draw[-,thin] (0.09,.03) to (-0.24,-.7);
	\draw[-,thin] (0.09,.03) to (0,-.7);
	\draw[-,thin] (0.09,.03) to (0.24,-.7);
	\draw[-,thin] (-0.09,.03) to (-0.24,-.7);
	\draw[-,thin] (-0.09,.03) to (0,-.7);
	\draw[-,thin] (-0.09,.03) to (0.24,-.7);
\end{tikzpicture}.
\end{equation}
Then compose these diagrams with the remainder of the diagram,
using (\ref{sliders}) then (\ref{assrel}) again
to commute the splits at the bottom of this part of the
resulting diagrams downwards past the generalized permutation part
of $g$. 
%The result is a sum of (not necessarily
%reduced) chicken foot diagrams, and we get done as in the previous paragraph.
\end{proof}

\begin{theorem}\label{ckmrevised}
There is an isomorphism of strict monoidal categories
$$
F:\Web \stackrel{\sim}{\rightarrow} \Schur
$$
which is the identity on objects (i.e., strict compositions) and 
sends the morphism 
$[A]\in\Hom_{\Web}(\mu,\lambda)$
defined by a reduced chicken foot diagram of type $A \in \Theta_{\lambda,\mu}$
to Schur's basis element $\xi_A \in \Hom_{\Schur}(\mu,\lambda)$.
In particular, the functor $F$ sends the generating morphisms (\ref{ms})
to the two-fold merge
$\xi_{\left(\begin{smallmatrix}a&b\end{smallmatrix}\right)}$,
the two-fold split
$\xi_{\left(\begin{smallmatrix}a\\b\end{smallmatrix}\right)}$
and the generalized permutation
$\xi_{\left(\begin{smallmatrix}0&b\\a&0\end{smallmatrix}\right)}$,
respectively.
\end{theorem}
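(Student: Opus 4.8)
\textbf{The plan} is to construct the functor in the stated direction, $F\colon\Web\to\Schur$, directly from the presentation of $\Web$, and then show it is bijective on objects and on every morphism space. On objects let $F$ be the identity on strict compositions. On the generating morphisms~(\ref{ms}) declare $F$ to send the two-fold merge, the two-fold split and the thick crossing to $\xi_{\left(\begin{smallmatrix}a&b\end{smallmatrix}\right)}$, $\xi_{\left(\begin{smallmatrix}a\\b\end{smallmatrix}\right)}$ and $\xi_{\left(\begin{smallmatrix}0&b\\a&0\end{smallmatrix}\right)}$ respectively; these lie in the correct morphism spaces of $\Schur$, since the matrices displayed belong to $\Theta_{(a+b),(a,b)}$, $\Theta_{(a,b),(a+b)}$ and $\Theta_{(b,a),(a,b)}$. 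By the universal property of a strict monoidal category presented by generators and relations, this data extends to a strict monoidal functor provided the images of the generators satisfy the defining relations (\ref{assrel}), (\ref{trivial}) and (\ref{mergesplit}) inside $\Schur$. The associativity relations (\ref{assrel}) hold because both sides are equal to the corresponding three-fold merge (resp.\ split), as in (\ref{mss}) and its transpose, which are instances of Schur's product rule; and the digon relation (\ref{trivial}) holds because the single structure constant occurring in $\xi_{\left(\begin{smallmatrix}a&b\end{smallmatrix}\right)}\circ\xi_{\left(\begin{smallmatrix}a\\b\end{smallmatrix}\right)}$ counts the tuples of $\I_{(a,b)}$, of which there are $\binom{a+b}{a}$.

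The one relation requiring a genuine, if routine, calculation is (\ref{mergesplit}). Here one expands both sides using Schur's product rule (\ref{schurs}): the left-hand side is a single merge followed by a single split (a composition factoring through a one-part composition, so every structure constant that arises equals $1$), while each ladder diagram on the right-hand side is a composite of two-fold splits, one thick crossing, and two-fold merges, hence is carried by $F$ to a single basis element $\xi_C$ with $C\in\Theta_{(b,d),(a,c)}$ by Lemma~\ref{harderthanyouthink} (equivalently, by the argument of Lemma~\ref{fixed}). It then remains to check that the parametrization of the ladder terms by $(s,t)$ matches the list of matrices $C$ occurring on the left, with matching coefficients. Granting this, $F$ is a well-defined strict monoidal functor.

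Next I would show $F([A])=\xi_A$ for every $A\in\Theta_{\lambda,\mu}$. The morphism $[A]$ is well-defined because the braid relations (\ref{braid}), which identify any two reduced chicken foot diagrams of the same type, follow from the defining relations (appendix). Choosing the reduced chicken foot diagram of type $A$ whose bottom part is the tensor product of the splits of column shapes $A^1,\dots,A^{\ell(\mu)}$, whose middle part is a reduced expression for the permutation $g$ attached to $A$ as in Lemma~\ref{fixed}, and whose top part is the tensor product of the merges of row shapes $A_1,\dots,A_{\ell(\lambda)}$, and applying $F$, we find (using Schur's product rule repeatedly, equivalently (\ref{mss}), its split analogue, and (\ref{genperm2})) that these three parts are sent to $\xi_{A^+}$, $\xi_{A^\circ}$ and $\xi_{A^-}$; hence $F([A])=\xi_{A^-}\circ\xi_{A^\circ}\circ\xi_{A^+}=\xi_A$ by Lemma~\ref{fixed}. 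Since $\{\xi_A\mid A\in\Theta_{\lambda,\mu}\}$ is a basis of $\Hom_\Schur(\mu,\lambda)$, it follows that $F$ is surjective on every morphism space.

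Finally, $F$ is the identity on objects, hence bijective on objects, and for each $\lambda,\mu\Vdash d$ Lemma~\ref{straightening} shows $\Hom_\Web(\mu,\lambda)$ is spanned by the finitely many morphisms $[A]$, so $\dim\Hom_\Web(\mu,\lambda)\le|\Theta_{\lambda,\mu}|=\dim\Hom_\Schur(\mu,\lambda)$. Combined with the surjectivity just proved, $F$ restricts to a linear isomorphism on each morphism space (in particular, the $[A]$ form a basis of $\Hom_\Web(\mu,\lambda)$), and therefore $F$ is an isomorphism of strict monoidal categories with the asserted effect on the $[A]$; its effect on the generators (\ref{ms}) then follows by reading off $A^-,A^\circ,A^+$ for a two-fold merge, split, and crossing. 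The step that is more than bookkeeping is the verification of (\ref{mergesplit}) in $\Schur$ — expanding both sides into linear combinations of the $\xi_C$ and matching them; everything else is assembled from Lemmas~\ref{tricky}, \ref{harderthanyouthink}, \ref{fixed} and \ref{straightening}.
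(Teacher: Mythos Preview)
Your proposal is correct and follows essentially the same approach as the paper: define $F$ on generators, verify the defining relations of $\Web$ in $\Schur$ via Schur's product rule, use the decomposition $[A]=[A^-]\circ[A^\circ]\circ[A^+]$ together with Lemma~\ref{fixed} to get $F([A])=\xi_A$, and then combine this surjectivity with the spanning statement of Lemma~\ref{straightening} to conclude that $F$ is an isomorphism. The paper compresses your verification of (\ref{mergesplit}) into the single phrase ``an application of Schur's product rule,'' whereas you spell out that the left side yields $\sum_{C}\xi_C$ (all structure constants equal to $1$ since the intermediate object is a one-part composition) and that each ladder term yields a single $\xi_C$ via Lemma~\ref{harderthanyouthink}; this is exactly the intended calculation.
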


\begin{proof}
We define $F$ to be the identity on objects, and define it on
the generating morphisms for $\Web$ 
so that
\begin{align*}
\begin{tikzpicture}[baseline = -.5mm]
	\draw[-,line width=1pt] (0.28,-.3) to (0.08,0.04);
	\draw[-,line width=1pt] (-0.12,-.3) to (0.08,0.04);
	\draw[-,line width=2pt] (0.08,.4) to (0.08,0);
        \node at (-0.22,-.4) {$\scriptstyle a$};
        \node at (0.35,-.4) {$\scriptstyle b$};
\end{tikzpicture} 
&\mapsto \xi_{\left(\begin{smallmatrix}a&b\end{smallmatrix}\right)},
&
\begin{tikzpicture}[baseline = -.5mm]
	\draw[-,line width=2pt] (0.08,-.3) to (0.08,0.04);
	\draw[-,line width=1pt] (0.28,.4) to (0.08,0);
	\draw[-,line width=1pt] (-0.12,.4) to (0.08,0);
        \node at (-0.22,.5) {$\scriptstyle a$};
        \node at (0.36,.5) {$\scriptstyle b$};
\end{tikzpicture}
&\mapsto
\xi_{\left(\begin{smallmatrix}a\\b\end{smallmatrix}\right)},
&\begin{tikzpicture}[baseline=-.5mm]
	\draw[-,thick] (-0.3,-.3) to (.3,.4);
	\draw[-,thick] (0.3,-.3) to (-.3,.4);
        \node at (0.3,-.4) {$\scriptstyle b$};
        \node at (-0.3,-.4) {$\scriptstyle a$};
\end{tikzpicture}
&\mapsto
\xi_{\left(\begin{smallmatrix}0&b\\a&0\end{smallmatrix}\right)}.
\end{align*}
To see that this is well-defined, we just need to verify that the
defining 
relations
(\ref{assrel})--(\ref{mergesplit}) of $\Web$ are satisfied in $\Schur$. This is
an application of Schur's product rule; in particular, 
(\ref{assrel}) for merges follows by the identity (\ref{mss}) already
checked above.

Now take $A \in
\Theta_{\lambda,\mu}$.
The morphism $[A]\in\Hom_{\Web}(\mu,\lambda)$ 
is the vertical concatenation $[A^-] \circ [A^\circ] \circ
[A^+]$
for $A^-$, $A^\circ$ and $A^+$ defined prior to
Lemma~\ref{fixed}. This follows because the reduced
chicken foot diagrams for $A^-, A^\circ$ and $A^+$ 
give the top, middle and bottom parts of the one for $A$.
From (\ref{mss}) (and its analog for splits) and (\ref{dare})--(\ref{dare2}), it follows that
$F([A^-]) = \xi_{A^-}$ and $F([A^+]) = \xi_{A^+}$.
Also $[A^\circ]$ is a generalized permutation, 
so by (\ref{dare3}) we have that
$F([A^\circ]) = \xi_{A^\circ}$.
It remains to apply Lemma~\ref{fixed} to deduce that $F([A]) = \xi_A$.

Since the morphisms $\xi_A$ for $A \in \Theta_{\lambda,\mu}$ 
form a basis for $\Hom_{\Schur}(\mu,\lambda)$ by Definition~\ref{scat}, and
the corresponding 
morphisms $[A]$ span $\Hom_{\Web}(\mu,\lambda)$ by
Lemma~\ref{straightening},
we deduce that $F$ is full and faithful. Hence, it is an isomorphism.
\end{proof}

From now on, we will {\em identify} the categories $\Web$ and $\Schur$
via the isomorphism $F$ from Theorem~\ref{ckmrevised}. We will refer
to this category as the Schur category rather than the polynomial web
category, and will not use the notation $\Web$ again.

\begin{remark}\label{codet}
The Schur algebra possesses another classical basis, namely, Green's
basis of {codeterminants}; see \cite{Greenco, W}.
Using Remark~\ref{connectr}, it is straightforward to translate 
Green's result to obtain another basis 
for the morphism space
$\Hom_{\Schur}(\mu,\lambda)$, as follows.
Suppose that
$\lambda,\mu \Vdash d$.
For a partition $\kappa \vdash d$,
let $\Std(\lambda,\kappa)$ denote the set of all semistandard
Young tableaux of shape $\kappa$ and 
content $\lambda$, i.e., fillings of the Young
diagram of $\kappa$ with $\lambda_1$ entries equal to 1, $\lambda_2$
entries equal to 2,
\dots, so that the entries are weakly increasing along rows and
strictly decreasing down columns.
Define $\Std(\mu,\kappa)$ similarly.
For $P \in \Std(\lambda,\kappa)$
and $Q \in \Std(\mu,\kappa)$, let
\begin{equation}\label{gct}
\gamma_{P,Q} := \xi_A \circ \xi_B
\end{equation}
where $A \in \Theta_{\lambda,\kappa}$ (resp., $B \in
\Theta_{\kappa,\mu}$)
is defined so that $a_{i,j}$ is the number of entries $i$ in the $j$th
row of $P$ (resp., $b_{i,j}$ is the number of entries $j$ in the $i$th row of $Q$).
Note that a reduced chicken foot diagram of type
$A$ has no merges, while one of type $B$ has no splits.
Consequently, the diagram for
$\gamma_{P,Q}$ can look rather different than a chicken foot diagram: it
has generalized permutations at the top and bottom and
merges and splits in the middle.
The {\em codeterminant basis} for
$\Hom_{\Schur}(\mu,\lambda)$ is
\begin{equation}
\{\gamma_{P,Q} \:|\:d \geq 0, \kappa \vdash d, P \in \Std(\lambda,\kappa), Q \in
\Std(\mu,\kappa)\}.
\end{equation}
This basis is of a similar nature to the basis recently
constructed from a completely different viewpoint by Elias
\cite{Elias}.
It gives $\Schur$ the structure of an object-adapted cellular category
in the sense of \cite[Definition 2.1]{EL}.
%(On the other hand, the chicken foot
%basis for $\Hom_{\Web}(\mu,\lambda)$ we have derived above starting
%from Schur's classical basis
%does not seem to appear explicitly in the existing literature.)
\end{remark}

It is time to return to the study of the category $\Tilt{G_n}$ of tilting
modules for $G_n$.
For $\lambda \vDash d$, let
\begin{equation}\label{bwl}
\textstyle\bigwedge^\lambda V_n := 
\bigwedge^{\lambda_1} V_n \otimes\cdots\otimes
\bigwedge^{\lambda_{\ell(\lambda)}} V_n \in \Tilt{G_n}.
\end{equation}
Let $S_\lambda$ denote the standard parabolic subgroup
$S_{\lambda_1}\times\cdots\times S_{\lambda_{\ell(\lambda)}}$ of the
symmetric group $S_d$.
Given also $\mu \vDash d$, let
$(S_\lambda\backslash S_d)_{\min}$ and
$(S_d / S_\mu)_{\min}$ 
be the sets of minimal length
 $S_\lambda\backslash S_d$- and $S_d / S_\mu$-coset
representatives, respectively.
Then
$$
(S_\lambda \backslash S_d / S_\mu)_{\min} := (S_\lambda\backslash
S_d)_{\min} \cap (S_d / S_\mu)_{\min}
$$
is the set of minimal length $S_\lambda\backslash S_d / S_\mu$-double
coset representatives, and 
there is a bijection
\begin{equation}\label{dcosets}
\Theta_{\lambda,\mu} \stackrel{\sim}{\rightarrow} (S_\lambda \backslash S_d / S_\mu)_{\min},
\qquad
A \mapsto d_A.
\end{equation}
To construct $d_A$ from $A$, take a reduced chicken foot diagram of
type $A$; for once, we are not assuming $\lambda$ and $\mu$ are strict here, so $A$ may
have rows or columns of zeros, in which case we mean the same diagram as for the
matrix obtained from $A$ by removing these trivial rows and columns.
Then expand this diagram by
replacing each string of thickness $r$ by $r$ parallel strings of unit
thickness. The desired double coset representative $d_A$ is the
element of $S_d$ defined by the resulting permutation diagram.
For example, for $A$ as in (\ref{firstex}), the diagram expands
as
$$
  \begin{tikzpicture}[anchorbase,scale=1.5]
\draw[-,line width=.6mm] (.212,.5) to (.212,.39);
\draw[-,line width=.75mm] (.595,.5) to (.595,.39);
\draw[-,line width=.15mm] (0.0005,-.396) to (.2,.4);
\draw[-,line width=.3mm] (0.01,-.4) to (.59,.4);
\draw[-,line width=.3mm] (.4,-.4) to (.607,.4);
\draw[-,line width=.45mm] (.79,-.4) to (.214,.4);
\draw[-,line width=.15mm] (.8035,-.398) to (.614,.4);
\draw[-,line width=.3mm] (.4006,-.5) to (.4006,-.395);
\draw[-,line width=.6mm] (.788,-.5) to (.788,-.395);
\draw[-,line width=.45mm] (0.011,-.5) to (0.011,-.395);
\node at (0.05,0.15) {$\scriptstyle 1$};
\node at (0.79,0.05) {$\scriptstyle 1$};
\node at (0.35,0.37) {$\scriptstyle 3$};
\node at (0.17,-0.3) {$\scriptstyle 2$};
\node at (0.5,-0.3) {$\scriptstyle 2$};
\end{tikzpicture}
\rightsquigarrow
  \begin{tikzpicture}[anchorbase,scale=1.5]
\draw[-] (0,0) to (0,1);
\draw[-] (0.2,0) to (.8,1);
\draw[-] (0.4,0) to (1,1);
\draw[-] (0.6,0) to (1.2,1);
\draw[-] (.8,0) to (1.4,1);
\draw[-] (1,0) to (.2,1);
\draw[-] (1.2,0) to (.4,1);
\draw[-] (1.4,0) to (.6,1);
\draw[-] (1.6,0) to (1.6,1);
\end{tikzpicture}
$$
and $d_A = (2\:5\:8\:4\:7\:3\:6)$.

\begin{lemma}\label{haircut}
Suppose $\lambda,\mu \vDash d$ and $A \in 
\Theta_{\lambda,\mu}$.
We have that $d_A^{-1} S_\lambda d_A \cap S_\mu = S_{\mu^+}$ for some $\mu^+
\vDash d$ (see the discussion after (\ref{dare2}) for an explicit
construction of $\mu^+$).
There is a unique $G_n$-module homomorphism
$\phi_A$ making the diagram
$$
\begin{CD}
V_n^{\otimes d} &@> >>& V_n^{\otimes d}\\
@VVV&&@VVV\\
{\textstyle\bigwedge^\mu} V_n &@>\phi_A >>& {\textstyle\bigwedge^\lambda} V_n
\end{CD}
$$
commute,
where the top map is
the $G_n$-module homomorphism defined by right multiplication by
$\sum_{g \in  (S_\mu / S_{\mu^+})_{\min}} (-1)^{\ell(g d_A^{-1})} g d_A^{-1} \in \k S_d$, and
the vertical maps are the natural quotients.
\end{lemma}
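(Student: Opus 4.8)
The plan is to treat the two assertions separately; the first is standard double-coset combinatorics and the second is the real content. For $d_A^{-1}S_\lambda d_A\cap S_\mu=S_{\mu^+}$, conjugate to rephrase it as $S_\lambda\cap d_A S_\mu d_A^{-1}=d_A S_{\mu^+}d_A^{-1}$ and read off the common refinement of the two underlying set partitions of $\{1,\dots,d\}$ from the expanded unit-thickness permutation diagram used to define $d_A$: inside the $j$th $\mu$-block the strings are grouped according to which $\lambda$-block they reach at the top, giving intervals of sizes $a_{1,j},a_{2,j},\dots$ after deleting zeros, and doing this for $j=1,2,\dots$ in order recovers exactly the column reading of $A$, i.e.\ $\mu^+$ as described after (\ref{dare2}). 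Since these blocks are intervals, the intersection is the standard Young subgroup $S_{\mu^+}$ (this is also standard; cf.\ \cite{Green}). Now write $q_\lambda\colon V_n^{\otimes d}\twoheadrightarrow\bigwedge^\lambda V_n$ and $q_\mu\colon V_n^{\otimes d}\twoheadrightarrow\bigwedge^\mu V_n$ for the two vertical quotient maps and $R_\theta\colon V_n^{\otimes d}\to V_n^{\otimes d}$ for right multiplication by $\theta:=\sum_{g\in(S_\mu/S_{\mu^+})_{\min}}(-1)^{\ell(gd_A^{-1})}gd_A^{-1}$; all three are $G_n$-linear, and since $q_\mu$ is surjective, $\phi_A$ exists and is unique (and automatically $G_n$-linear) if and only if $R_\theta(\ker q_\mu)\subseteq\ker q_\lambda$. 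Proving this inclusion is the whole job.

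The key elementary input is that for any composition $\nu\vDash d$ and any $g\in S_\nu$ one has $q_\nu\circ R_g=(-1)^{\ell(g)}q_\nu$, valid in every characteristic: it reduces to a single one-part block, where $q(v_\bi\cdot g)=v_{i_{g(1)}}\wedge\cdots\wedge v_{i_{g(m)}}$ equals $(-1)^{\ell(g)}v_{i_1}\wedge\cdots\wedge v_{i_m}$ when the $i_r$ are distinct and equals $0=(-1)^{\ell(g)}\cdot 0$ when they are not. Applying this to $q_\mu$ and comparing dimensions (the sorted-within-blocks repeat-free tuples give a spanning set of the quotient, of size $\prod_i\binom{n}{\mu_i}=\dim\bigwedge^\mu V_n$), one finds that $\ker q_\mu$ is spanned by the vectors $v_\bk\bigl(1-(-1)^{\ell(g)}g\bigr)$ with $g\in S_\mu$ together with the vectors $v_\bk$ for which $\bk$ has two equal entries inside a single $\mu$-block. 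So it suffices to show $q_\lambda R_\theta$ annihilates each of these.

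For a generator of the first kind it is enough to verify $q_\lambda(v_\bk\,g\theta)=(-1)^{\ell(g)}q_\lambda(v_\bk\theta)$ for $g\in S_\mu$. Reindex the sum for $\theta$ by the bijection $h\mapsto h'$ of $(S_\mu/S_{\mu^+})_{\min}$ with $h'$ the minimal representative of $ghS_{\mu^+}$, and write $gh=h'k$ with $k\in S_{\mu^+}$; then $ghd_A^{-1}=h'd_A^{-1}\cdot(d_Akd_A^{-1})$ with $d_Akd_A^{-1}\in S_\lambda$ by the first assertion, so the key input gives $q_\lambda(v_\bk\,ghd_A^{-1})=(-1)^{\ell(k)}q_\lambda(v_\bk\,h'd_A^{-1})$, and multiplicativity of $(-1)^{\ell(\cdot)}$ collapses the signs to the required identity. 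For a generator $v_\bk$ of the second kind, fix a transposition $\tau\in S_\mu$ with $v_\bk\cdot\tau=v_\bk$ and consider the involution $h\mapsto h^\tau$ of $(S_\mu/S_{\mu^+})_{\min}$ with $h^\tau$ the minimal representative of $\tau hS_{\mu^+}$. If $h\neq h^\tau$, writing $\tau h=h^\tau k$ with $k\in S_{\mu^+}$ and using $v_\bk\tau=v_\bk$ shows (again via $d_Akd_A^{-1}\in S_\lambda$ and the key input) that the $h$ and $h^\tau$ terms of $q_\lambda(v_\bk\theta)$ cancel. If $h=h^\tau$, then $k:=h^{-1}\tau h\in S_{\mu^+}$, so $\tau':=d_Akd_A^{-1}\in S_\lambda$ is a transposition, and from $\tau\cdot hd_A^{-1}=hd_A^{-1}\cdot\tau'$ together with $v_\bk\tau=v_\bk$ one gets that $v_\bk\cdot hd_A^{-1}$ is fixed by $\tau'$, hence has two equal entries inside a single $\lambda$-block and is killed by $q_\lambda$. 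Summing, $q_\lambda(v_\bk\theta)=0$, which finishes the proof.

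The main obstacle is organisational rather than conceptual: one must pin down the first assertion (the precise identification of $S_{\mu^+}$ from $A$ and $d_A$) and arrange the sign-and-reindexing bookkeeping so that everything is uniform in the characteristic, so that the characteristic-$2$ case, where the signs vanish and the naive "divide by two" reductions are unavailable, is covered by exactly the same pairing arguments.
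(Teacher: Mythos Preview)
Your argument is correct and follows the same overall strategy as the paper: reduce to showing that $q_\lambda\circ R_\theta$ annihilates $\ker q_\mu$, and exploit that conjugation by $d_A$ carries $S_{\mu^+}$ into $S_\lambda$. The organizational choices differ, however. The paper describes $\ker q_\mu$ as the sum of the fixed-point spaces of simple reflections $s\in S_\mu$, then for such a fixed vector $v$ decomposes $(S_\mu/S_{\mu^+})_{\min}$ as $X\sqcup sX\sqcup Y$ using \cite[Lemma~1.1]{DJ}; the $X$ and $sX$ terms cancel, and for $y\in Y$ one invokes \cite[Lemma~1.6(iv)]{DJ} to see that $t:=d_A(y^{-1}sy)d_A^{-1}\in S_\lambda$ is a \emph{simple} reflection, so $vyd_A^{-1}$ lies in the fixed-point description of $\ker q_\lambda$. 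You instead take an explicit spanning set of $\ker q_\mu$ (the relators $v_\bk(1-(-1)^{\ell(g)}g)$ plus the repeat vectors) and handle each type by direct reindexing; your sign bookkeeping only needs that $(-1)^{\ell(\cdot)}$ is a homomorphism, hence conjugation-invariant, and that a conjugate of a transposition is a transposition, so you bypass both Dipper--James lemmas. The trade-off is that your argument is a little more elementary and characteristic-uniform by design, while the paper's is more compact once one is willing to quote \cite{DJ}.
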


\begin{proof}
The first statement follows from \cite[Lemma 1.6(ii)]{DJ}.
The kernel of the projection $V_n^{\otimes d} \twoheadrightarrow
\bigwedge^\mu V_n$ is spanned by the fixed point sets of the involutions of $V_n^{\otimes d}$
defined by right multiplication by 
all simple reflections $s \in S_\mu$.
Thus, to complete the proof, we need to show for such an $s$ and $v \in V_n^{\otimes d}$ with
$vs = v$
that the vector $$
w := \sum_{g \in  (S_\mu / S_{\mu^+})_{\min}} (-1)^{\ell(g d_A^{-1})} v g d_A^{-1}
$$ 
is in the kernel of the projection $V_n^{\otimes d} \twoheadrightarrow
\bigwedge^\lambda V_n$.
For $g \in (S_\mu / S_{\mu^+})_{\min}$,
we either have that $s g S_{\mu^+} \neq g S_{\mu^+}$, in which case $sg
\in (S_\mu / S_{\mu^+})_{\min}$ too, or $sg S_{\mu^+} = gS_{\mu^+}$, in which
case
$g^{-1} sg \in S_{\mu^+}$; see \cite[Lemma 1.1]{DJ}.
It follows that $(S_\mu / S_{\mu^+})_{\min}$ decomposes as $X \sqcup s X
\sqcup Y$ such that 
$\ell(sx) = \ell(x)+1$ for all $x \in X$, and 
$y^{-1} sy \in S_{\mu^+}$ for all $y \in Y$.
For $x \in X$, we have that
$(-1)^{\ell(x d_A^{-1})} v x d_A^{-1}  + (-1)^{\ell(sx d_A^{-1})} v s x d_A^{-1} = 0$
as $v s = v$.
This implies that
$$
w = \sum_{y \in Y} (-1)^{\ell(y d_A^{-1})} v y d_A^{-1}.
$$
It remains to show for $y \in Y$ that $v y d_A^{-1}$ is in the kernel
of $V_n^{\otimes d} \twoheadrightarrow
\bigwedge^\lambda V_n$.
We have that $s y d_A^{-1} = y d_A^{-1} t$
for $t := d_A (y^{-1} s y) d_A^{-1} \in
S_\lambda$.
By \cite[Lemma 1.6(iv)]{DJ}, $\ell(y d_A^{-1} t) =
\ell(y)+\ell(d_A^{-1})+\ell(t)$. Since $\ell(s y d_A^{-1}) \leq
\ell(y)+\ell(d_A^{-1})+1$,
we deduce that $\ell(t) = 1$. Moreover
$v y d_A^{-1} t = v s y d_A^{-1} = v y d_A^{-1}$.
This shows that $v y d_A^{-1}$ is a fixed point for the simple reflection 
$t \in S_\lambda$, thus, it is in the kernel of the projection.
\end{proof}

\begin{proposition}[Donkin]\label{donuts}
Fix integers $m,d \geq 0$.
For any $n \geq 0$,
there is a surjective algebra homomorphism
\begin{equation}\label{donkiniso}
f_{n}:S(m,d) \twoheadrightarrow \End_{G_n}\left(\bigoplus_{\lambda \in \Lambda(m,d)}
{\textstyle\bigwedge^\lambda} V_n\right)
\end{equation}
sending 
$\xi_A \in 1_\lambda S(m,d) 1_\mu$
to the endomorphism that is equal to the homomorphism
$\phi_A$ from Lemma~\ref{haircut} 
on the summand
$\bigwedge^\mu V_n$,
and is zero on all other summands.
Moreover, $f_n$ is an isomorphism if $n\geq d$.
\end{proposition}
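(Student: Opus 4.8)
The plan is to identify $f_n$ with the skew Howe action of $GL_m$ on an exterior power, which makes the homomorphism property and the injectivity transparent, and to deduce surjectivity from invariant theory together with the straightening machinery of this section. Choose a basis $e_1,\dots,e_m$ of $V_m$ and write $V_m\otimes V_n=\bigoplus_{i=1}^m e_i\otimes V_n$; then the decomposition $\bigwedge^d\bigl(\bigoplus_i e_i\otimes V_n\bigr)=\bigoplus_{\lambda\in\Lambda(m,d)}\bigwedge^{\lambda_1}V_n\otimes\cdots\otimes\bigwedge^{\lambda_m}V_n$ identifies $M_n:=\bigoplus_{\lambda\in\Lambda(m,d)}\bigwedge^\lambda V_n$ with $\bigwedge^d(V_m\otimes V_n)$ as a $G_n$-module, the grouping being by $GL_m$-weight, so that the weight idempotent $1_\lambda\in S(m,d)$ projects onto $\bigwedge^\lambda V_n$. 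Now $\bigwedge^d(V_m\otimes V_n)$ carries commuting $GL_m$- and $G_n$-actions, the $GL_m$-action being polynomial of degree $d$; it therefore factors through $S(m,d)$ and, since the two actions commute, yields an algebra homomorphism $f_n:S(m,d)\to\End_{G_n}(M_n)$. Concretely $f_n(\xi_A)$ is $\xi_A\otimes\id_{V_n^{\otimes d}}$ acting on $V_m^{\otimes d}\otimes V_n^{\otimes d}$ (which commutes with the diagonal $S_d$ because $\xi_A\in\End_{S_d}(V_m^{\otimes d})$) and then descended to the quotient $\bigwedge^d(V_m\otimes V_n)$. A direct computation — expanding $\xi_A$ on weight vectors via (\ref{cats}) and tracking the reordering permutation and its sign, which is governed by the double-coset representative $d_A$ of (\ref{dcosets}) and the subgroup $S_{\mu^+}$ — shows that $f_n(\xi_A)$ vanishes off the $\bigwedge^\mu V_n$-summand and restricts to $\phi_A$ there (after fixing wedge-ordering conventions; cf.\ the sign $(-1)^{ab}$ appearing in Theorem~\ref{dthm}). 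Alternatively, that $\xi_A\mapsto\phi_A$ defines an algebra homomorphism can be checked by hand from Schur's product rule (\ref{schurs}) and the Mackey count of Lemma~\ref{tricky}.

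For surjectivity, set $W:=V_m\otimes V_n$, so $M_n$ is a $G_n$-equivariant quotient $q:W^{\otimes d}\twoheadrightarrow\bigwedge^d W=M_n$. As a $G_n$-module $W^{\otimes d}\cong V_m^{\otimes d}\otimes V_n^{\otimes d}$ is a direct sum of copies of $V_n^{\otimes d}$, which is projective in the category of polynomial $G_n$-modules of degree $d$ (it is $S(n,d)1_\omega$ when $n\ge d$ by (\ref{crazy}), and an idempotent truncation of such a projective for general $n$); hence $\Hom_{G_n}(W^{\otimes d},-)$ is exact on that category. Any $\phi\in\End_{G_n}(M_n)$ therefore lifts along $q$ to $\tilde\phi\in\End_{G_n}(W^{\otimes d})$ with $q\circ\tilde\phi=\phi\circ q$. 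By Schur–Weyl duality (valid over any infinite field) the place permutations span $\End_{G_n}(V_n^{\otimes d})$, so $\End_{G_n}(W^{\otimes d})=\End_\k(V_m^{\otimes d})\otimes\bigl(\text{image of }\k S_d\text{ on }V_n^{\otimes d}\bigr)$; writing a permutation of the $V_n$-tensor slots as the corresponding diagonal permutation of all $d$ slots of $W$ composed with the inverse permutation of the $V_m$-slots, and using that a diagonal permutation acts on $\bigwedge^d W$ by its sign, we obtain $q\circ\tilde\phi=q\circ(Z\otimes\id)$ for some $Z\in\End_\k(V_m^{\otimes d})$. The point is then that $Z$ may be adjusted by an element of $\ker\bigl(Z'\mapsto q\circ(Z'\otimes\id)\bigr)$ so as to lie in $S(m,d)=\End_{S_d}(V_m^{\otimes d})$; this is a double-coset (Mackey) computation of exactly the sort performed in Lemmas~\ref{tricky} and~\ref{haircut}, and once it is done $\phi=f_n(Z)$ lies in the image of $f_n$.

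For the ``moreover'' statement, decompose instead along a basis of $V_n$ to get $\bigwedge^d(V_m\otimes V_n)\cong\bigoplus_{\nu\in\Lambda(n,d)}\bigwedge^\nu V_m$ as $GL_m$-modules, hence as $S(m,d)$-modules. When $n\ge d$ the composition $\nu=(1^d,0^{n-d})$ lies in $\Lambda(n,d)$, and its summand is $\bigwedge^{(1^d,0^{n-d})}V_m\cong V_m^{\otimes d}$. Since $S(m,d)$ acts faithfully on $V_m^{\otimes d}$ by its very definition as $\End_{S_d}(V_m^{\otimes d})$, the $S(m,d)$-module $M_n$ is faithful, so $\ker f_n=\operatorname{ann}_{S(m,d)}(M_n)=0$; combined with surjectivity, $f_n$ is an isomorphism for $n\ge d$.

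The main obstacle is the surjectivity step, specifically this last reduction: replacing the lift $Z$ by an $S_d$-equivariant operator, equivalently proving that the $\phi_A$ already span each $\Hom_{G_n}(\bigwedge^\mu V_n,\bigwedge^\lambda V_n)$. This is precisely where the diagrammatic apparatus of this section is needed — the straightening algorithm of Lemma~\ref{straightening} to reduce an arbitrary web to reduced chicken foot diagrams, and the double-coset bookkeeping of Lemmas~\ref{tricky}--\ref{haircut} — and one must simultaneously keep track of signs carefully enough to pin the image of $\xi_A$ down to $\phi_A$ exactly rather than up to a scalar.
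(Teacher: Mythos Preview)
Your skew Howe setup is natural and your injectivity argument for $n\ge d$ (finding $V_m^{\otimes d}$ as a $GL_m$-summand of $\bigwedge^d(V_m\otimes V_n)$) is clean and correct. But the surjectivity argument has a genuine gap, and it is not merely a matter of bookkeeping.

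First, the projectivity claim is false in general: $V_n^{\otimes d}$ is a tilting $S(n,d)$-module, but for $n<d$ it need not be projective. For instance, take $n=2$, $d=4$, $p=2$. The indecomposable projectives $P(4,0),P(3,1),P(2,2)$ have dimensions $5,8,9$, and the only way to write $16=\dim V_2^{\otimes 4}$ as a non-negative combination is $8+8$; but $P(3,1)^{\oplus 2}$ has $(4,0)$-weight multiplicity $2$, whereas $V_2^{\otimes 4}$ has $(4,0)$-weight multiplicity $1$. So $V_2^{\otimes 4}$ is not projective, and your lifting of $\phi$ along $q$ is not available. Your parenthetical ``idempotent truncation of such a projective'' does not rescue this: the functor $e(-):S(N,d)\text{-mod}\to S(n,d)\text{-mod}$ is exact but does not preserve projectives.

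Second, even when $n\ge d$ and the lift $\tilde\phi$ exists, the step ``adjust $Z$ to lie in $S(m,d)$'' is asserted but not proved. You appeal to Lemmas~\ref{tricky}, \ref{straightening}, \ref{haircut}, but Lemma~\ref{straightening} is about spanning morphism spaces in the abstract category $\Web$, not about $G_n$-homomorphisms; the statement you actually need---that the $\phi_A$ span $\Hom_{G_n}(\bigwedge^\mu V_n,\bigwedge^\lambda V_n)$---is precisely the surjectivity you are trying to prove.

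The paper's argument is quite different. For $n\ge d$ it does not lift or adjust at all: it applies the Schur functor $\pi$ from $S(n,d)$-mod to $\k S_d$-mod, observes that $\pi(\bigwedge^\lambda V_n)\cong N(\lambda)$ (signed permutation module), and uses the key fact that $\pi$ induces an \emph{isomorphism} on $\Hom$ between exterior powers because their heads and socles are $p$-restricted. This collapses the problem to $\End_{S_d}\bigl(\bigoplus_\lambda N(\lambda)\bigr)\cong S(m,d)$, giving surjectivity and injectivity simultaneously. For $n<d$ the paper explicitly invokes Donkin's truncation argument, which rests on the quasi-hereditary structure of Schur algebras---exactly the homological input your approach tries to bypass but cannot.
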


\begin{proof}
This is proved in \cite{D1}, but we need to go through the
argument in detail in order to identify the map $f_{n}$ explicitly.
We just treat the case that $n \geq d$. Then the existence and
surjectivity
of $f_{n}$ for $n < d$ follows from the existence and
surjectivity of $f_{N}$
for $N \geq d$
by an argument involving truncation to the subgroup $G_n < G_N$.
This step
is 
explained in the proof of \cite[Proposition 3.11]{D1}; it depends on
\cite[Proposition 1.5]{D1}, 
hence, on homological properties arising from the 
fact that Schur algebras are quasi-hereditary algebras.

So now assume that $n \geq d$. We must show that $f_{n}$ is a
well-defined algebra isomorphism.
For $\lambda \in \Lambda(m,d)$, 
let $M(\lambda)$ be the right permutation module
$X_\lambda \otimes_{\k S_\lambda} \k S_d$, where
$X_\lambda$ is the trivial one-dimensional right $S_\lambda$-module 
with generator $x_\lambda$.
The module $M(\lambda)$
is isomorphic to the $\lambda$-weight
space $1_\lambda V_m^{\otimes d}$ of $V_m^{\otimes d}$
via the unique $S_d$-module homomorphism sending
$x_\lambda \otimes 1\in M(\lambda)$ to 
$v_1^{\otimes \lambda_1} \otimes \cdots \otimes v_m^{\otimes \lambda_m}$.
By the definition
(\ref{schurdef}) (with $n$ replaced by $m$ of course), we have that
\begin{equation*}
S(m,d) \cong \End_{S_d}\left(\bigoplus_{\lambda \in \Lambda(m,d)}
M(\lambda)\right).
\end{equation*}
Under this isomorphism, $\xi_A \in 1_\lambda S(m,d) 1_\mu$ corresponds
to the unique $S_d$-module
homomorphism 
$M(\mu) \rightarrow M(\lambda)$ sending
$x_\mu\otimes 1$ to $x_\lambda \otimes \sum_{g \in (S_{\mu^+}\backslash S_\mu)_{\min}} d_A g$,
where $S_{\mu^+} = d_A^{-1} S_\lambda d_A \cap S_\mu$ as in Lemma~\ref{haircut}.
This follows from (\ref{cats}),
noting that $S_{\mu^+}=
\Stab_{S_d} (\bi\cdot d_A) \cap \Stab_{S_d}(\bj)$
where $\bi = (1^{\lambda_1},\dots,m^{\lambda_m})$ and $\bj = (1^{\mu_1},\dots,m^{\mu_m})$.

Consider instead the left signed permutation module $N(\lambda):=
\k S_d \otimes_{\k S_\lambda} Y_\lambda$,
where $Y_\lambda$ is the one-dimensional left $S_\lambda$-module
with generator $y_\lambda$
such that $g y_\lambda =
(-1)^{\ell(g)} y_\lambda$ for all $g \in S_\lambda$.
Noting that $N(\lambda)$ is isomorphic to $M(\lambda)$ tensored by 
sign and converted from a right module to a left module using the
antiautomorphism $g \mapsto g^{-1}$,
we deduce from the previous paragraph
that there is an algebra isomorphism
\begin{equation*}
S(m,d) \cong \End_{S_d} \left(\bigoplus_{\lambda \in \Lambda(m,d)}
    N(\lambda)\right).
\end{equation*}
Under this isomorphism, $\xi_A \in 1_\lambda S(m,d) 1_\mu$ corresponds
to the unique $S_d$-module homomorphism
$N(\mu) \rightarrow N(\lambda)$
sending
$1 \otimes y_\mu$ to 
$
\sum_{g \in (S_\mu / S_{\mu^+})_{\min}} (-1)^{\ell(g d_A^{-1})} g d_A^{-1} \otimes y_\lambda$.

Now we are going to apply the Schur functor $\T$ from
(\ref{schurfunctor}).
The key observation is that $\T \left(\bigwedge^\lambda V_n\right) \cong
N(\lambda)$,
there being a unique such isomorphism sending
the canonical image of $v_1 \otimes \cdots \otimes v_d$
in $\bigwedge^\lambda V_n$ to $1 \otimes y_\lambda$.
The head of $\bigwedge^\mu V_n$ and the socle of $\bigwedge^\lambda
V_n$ are $p$-restricted in the sense that they only involve irreducible
modules which are not annihilated by $\T$.
Indeed, these modules are both submodules and quotient modules of the tensor
space $V_n^{\otimes d}$, which has $p$-restricted head and socle by
\cite[Corollary 2.12]{LR}.
Consequently, by \cite[Lemma 2.17(ii)]{LR}
(another well-known property of Schur functors), the Schur functor induces an isomorphism
$$
\Hom_{S(n,d)}(\textstyle\bigwedge^\mu V_n, \bigwedge^\lambda V_n)
\stackrel{\sim}{\rightarrow}
\Hom_{S_d}(N(\mu), N(\lambda));
$$
see also \cite[Lemma 3.6]{D1}.
It follows that $\T$ induces an algebra isomorphism
$$
\End_{G_n}\left(\bigoplus_{\lambda \in \Lambda(m,d)} \textstyle \bigwedge^\lambda V_n\right)
\cong
\End_{S_d}\left(\bigoplus_{\lambda \in \Lambda(m,d)} N(\lambda)\right).
$$
Composing this with the isomorphism in the previous paragraph gives the
desired isomorphism $f_{n}$.

It just remains to identify the endomorphism $f_{n}(\xi_A)$ 
with $\phi_A$.
For this, it suffices to check for $\xi_A \in 1_\lambda S(m,d) 1_\mu$
that the maps 
$f_{n}(\xi_A)$ and $\phi_A$  are equal on the canonical
image of $v_1\otimes\cdots\otimes v_d$ in $\bigwedge^\mu V_n$.
By the definition from Lemma~\ref{haircut}, $\phi_A$ sends this vector to 
the canonical image of 
$$
\sum_{g \in (S_\mu / S_{\mu^+})_{\min}} (-1)^{\ell(g
  d_A^{-1})} (v_1\otimes\cdots\otimes v_d) g d_A^{-1}
$$ 
in $\bigwedge^\lambda V_n$.
On the other hand, by the construction of $f_{n}$
from the previous two paragraphs, $f_{n}(\xi_A)$ takes
this vector to the image of
$$
\sum_{g \in (S_\mu / S_{\mu^+})_{\min}} (-1)^{\ell(g
  d_A^{-1})} g d_A^{-1} (v_1\otimes\cdots\otimes v_d),
$$ 
where $g d_A^{-1} \in S_d$ is being identified with an element of
$1_\omega S(n,d) 1_\omega$ via the isomorphism (\ref{cruzy}).
It remains to observe for $g \in S_d$ that $g (v_1 \otimes\cdots\otimes v_d) = (v_1
\otimes \cdots \otimes v_d) g$. This follows because the isomorphism (\ref{crazy}) 
maps $v_1 \otimes \cdots \otimes v_d$ to $1_\omega$.
\end{proof}

The following theorem gives a reformulation of Proposition~\ref{donuts} from
the perspective of the Schur category.

\begin{theorem}\label{dthm}
There is a full monoidal functor $\Sigma_n:\Schur
\rightarrow \Tilt{G_n}$ sending 
an object $\lambda \Vdash d$ to
$\bigwedge^\lambda V_n \in \Tilt{G_n}$,
and a morphism $\xi_A$ for 
$\lambda,\mu \Vdash d$ and $A \in \Theta_{\lambda,\mu}$
to the homomorphism 
$\phi_A:\bigwedge^\mu V_n \rightarrow\bigwedge^\lambda V_n$ from Lemma~\ref{haircut}.
In particular, $\Sigma_n$ maps
the two-fold merge from (\ref{ms})
to the projection
${\textstyle\bigwedge^a V_n \otimes \bigwedge^b V_n} \twoheadrightarrow {\textstyle\bigwedge^{a+b}
V_n}$,
the two-fold split to the inclusion
\begin{align*}
{\textstyle \bigwedge^{a+b} V_n} &\hookrightarrow
\textstyle{\bigwedge^a V_n \otimes \bigwedge^b V_n},\\
v_{i_1}\wedge\cdots\wedge v_{i_{a+b}}
&\mapsto 
\sum_{g \in (S_{a+b} / S_a \times S_b)_{\min}}
(-1)^{\ell(g)}
v_{i_{g(1)}} \wedge \cdots \wedge v_{i_{g(a)}}\otimes
v_{i_{g(a+1)}} \wedge \cdots \wedge v_{i_{g(a+b)}},
\end{align*}
and the thick crossing 
to
the isomorphism
${\textstyle\bigwedge^a V_n  \otimes \bigwedge^b V_n}
\stackrel{\sim}{\rightarrow} {\textstyle\bigwedge^b V_n \otimes \bigwedge^a V_n},
v \otimes w \mapsto (-1)^{ab} w \otimes v$.
\end{theorem}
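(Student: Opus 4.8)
The plan is to deduce $\Sigma_n$ directly from the surjection $f_n$ of Proposition~\ref{donuts}, restricted to a suitable idempotent corner, and then to read off its values on the generating morphisms by unwinding Lemma~\ref{haircut}. Fix $d\ge 0$ and choose any $m\ge d$. Since a strict composition $\lambda\Vdash d$ has $\ell(\lambda)\le d\le m$ parts, padding with trailing zeros identifies the strict compositions of $d$ with the set $\Lambda(m,d)_L$ of left-justified weights, and this does not change $\bigwedge^\lambda V_n$ because $\bigwedge^0 V_n=\k$. Put $e:=\sum_{\lambda\in\Lambda(m,d)_L}1_\lambda\in S(m,d)$, so that Remark~\ref{connectr} gives $eS(m,d)e\cong\bigoplus_{\lambda,\mu\Vdash d}\Hom_{\Schur}(\mu,\lambda)$. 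The map $f_n$ is unital and sends each weight idempotent $1_\lambda$ to $\operatorname{id}_{\bigwedge^\lambda V_n}$ (the case $A=\diag(\lambda_1,\dots,\lambda_{\ell(\lambda)})$ of the formula in Proposition~\ref{donuts}, where $\phi_A$ is the identity by Lemma~\ref{haircut}). Hence $f_n(e)=e':=\sum_{\lambda\in\Lambda(m,d)_L}\operatorname{id}_{\bigwedge^\lambda V_n}$, and $f_n$ restricts to a surjective algebra homomorphism $eS(m,d)e\twoheadrightarrow e'\End_{G_n}\!\big(\bigoplus_{\lambda\in\Lambda(m,d)}\bigwedge^\lambda V_n\big)e'=\bigoplus_{\lambda,\mu\Vdash d}\Hom_{G_n}\!\big(\bigwedge^\mu V_n,\bigwedge^\lambda V_n\big)$.

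A unital algebra homomorphism between the total $\Hom$-algebras of two $\k$-linear categories with finitely many objects that carries one distinguished complete family of orthogonal idempotents onto the other amounts to a functor between the categories which is a bijection on objects. Applied to the map just built, this gives a full functor from the full subcategory of $\Schur$ on strict compositions of $d$ to $\Tilt{G_n}$, sending $\lambda\mapsto\bigwedge^\lambda V_n$ and, by the description in Proposition~\ref{donuts}, sending $\xi_A\mapsto\phi_A$ for each $A\in\Theta_{\lambda,\mu}$; fullness is exactly the surjectivity above. Since $\Hom_{\Schur}(\mu,\lambda)=0$ whenever $\mu,\lambda$ have different degrees, $\Schur$ is the disjoint union over $d$ of these full subcategories, so the functors glue to a single full functor $\Sigma_n\colon\Schur\to\Tilt{G_n}$ with $\Sigma_n(\xi_A)=\phi_A$; on the empty composition $\bigwedge^{()}V_n=\k=\mathbbm 1$.

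It remains to make $\Sigma_n$ monoidal. On objects this is immediate from~(\ref{bwl}), as $\bigwedge^{\lambda\sqcup\mu}V_n=\bigwedge^\lambda V_n\otimes\bigwedge^\mu V_n$. On morphisms, every morphism of $\Schur$ is a linear combination of the $\xi_A$ and $\xi_A\otimes\xi_B=\xi_{\diag(A,B)}$, so monoidality reduces to the single identity $\phi_{\diag(A,B)}=\phi_A\otimes\phi_B$. This is the step that genuinely needs an argument---Proposition~\ref{donuts} involves no tensor products---and is the expected main obstacle, though it dissolves into routine parabolic combinatorics once one uses the explicit $\phi_A$ from Lemma~\ref{haircut}. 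Concretely, for $A\in\Theta_{\lambda,\mu}$ with $\mu\Vdash d$ and $B\in\Theta_{\lambda',\nu}$ with $\nu\Vdash d'$, the quotient $V_n^{\otimes(d+d')}=V_n^{\otimes d}\otimes V_n^{\otimes d'}\twoheadrightarrow\bigwedge^{\mu\sqcup\nu}V_n$ is the tensor product of the quotients onto $\bigwedge^\mu V_n$ and $\bigwedge^\nu V_n$, while the top map in the square of Lemma~\ref{haircut} for $\diag(A,B)$ factors as the tensor product of the top maps for $A$ and for $B$, using $d_{\diag(A,B)}=d_Ad_B$, $(\mu\sqcup\nu)^+=\mu^+\sqcup\nu^+$, $S_{\mu\sqcup\nu}=S_\mu\times S_\nu$ and $(S_{\mu\sqcup\nu}/S_{(\mu\sqcup\nu)^+})_{\min}=(S_\mu/S_{\mu^+})_{\min}\times(S_\nu/S_{\nu^+})_{\min}$ for the block-diagonal embedding, together with additivity of length along this product, so that the signed sum defining the top map for $\diag(A,B)$ is the product of the two signed sums. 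The uniqueness clause of Lemma~\ref{haircut} then forces $\phi_{\diag(A,B)}=\phi_A\otimes\phi_B$, and the coherence constraints may be taken to be the evident ones inherited from the associativity of $\otimes$, both categories being (essentially) strict.

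Finally, the asserted images of the generating morphisms follow by specialising Lemma~\ref{haircut}, using Theorem~\ref{ckmrevised} to identify the two-fold merge, two-fold split, and thick crossing $(a,b)\to(b,a)$ with $\xi_{\left(\begin{smallmatrix}a&b\end{smallmatrix}\right)}$, $\xi_{\left(\begin{smallmatrix}a\\b\end{smallmatrix}\right)}$, and $\xi_{\left(\begin{smallmatrix}0&b\\a&0\end{smallmatrix}\right)}$. For $A=\left(\begin{smallmatrix}a&b\end{smallmatrix}\right)$ one has $d_A=1$ and $\mu^+=\mu=(a,b)$, so the coset sum is trivial and $\phi_A$ is the natural quotient $\bigwedge^a V_n\otimes\bigwedge^b V_n\twoheadrightarrow\bigwedge^{a+b}V_n$. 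For $A=\left(\begin{smallmatrix}a\\b\end{smallmatrix}\right)$ one has $d_A=1$, $\mu=(a+b)$ and $\mu^+=(a,b)$, so $\phi_A$ is induced by right multiplication by $\sum_{g\in(S_{a+b}/S_a\times S_b)_{\min}}(-1)^{\ell(g)}g$, which on $v_{i_1}\wedge\cdots\wedge v_{i_{a+b}}$ gives exactly the signed sum in the statement. For $A=\left(\begin{smallmatrix}0&b\\a&0\end{smallmatrix}\right)$ one again has $\mu^+=\mu=(a,b)$, while $d_A$ is the minimal double coset representative of length $ab$ realising the block transposition, so $\phi_A$ is $(-1)^{ab}$ times the swap of the two tensor factors, i.e.\ $v\otimes w\mapsto(-1)^{ab}w\otimes v$. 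This completes the plan.
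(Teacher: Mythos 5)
Your proof is correct and follows essentially the same route as the paper's: both deduce functoriality and fullness of $\Sigma_n$ from Proposition~\ref{donuts} via the identification in Remark~\ref{connectr}, and both reduce monoidality to $\phi_{\diag(A,B)}=\phi_A\otimes\phi_B$. You spell out more carefully the idempotent-corner bookkeeping and the parabolic combinatorics behind the tensor-compatibility that the paper dismisses as ``clear,'' and your explicit verification of the images of the generators matches the paper's intent.
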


\begin{proof}
To see that $\Sigma_n$ is a well-defined functor, we need to show that
$\Sigma_n(\xi_A \circ \xi_B) = \Sigma_n(\xi_A) \circ \Sigma_n(\xi_B)$
for $A \in \Theta_{\lambda,\mu}$ and $B \in
  \Theta_{\mu,\nu}$
for $\lambda,\mu,\nu \Vdash d$ and $d \geq 0$.
By Schur's product rule, $\Sigma_n(\xi_A \circ \xi_B) 
= \sum_{C \in \Theta_{\lambda,\nu}} Z(A,B,C) \Sigma_n(\xi_C)
= \sum_{C \in \Theta_{\lambda,\nu}} Z(A,B,C) \phi_C$.
We need to show this equals
$\phi_A \circ \phi_B$.
This follows from 
Proposition~\ref{donuts} 
and (\ref{connect}) with $n$ replaced by $m \geq d$.
The proposition also shows that $\Sigma_n$ is full.
Finally, to see that $\Sigma_n$ is a monoidal functor,
we need to check that
$\phi_A \otimes \phi_B = \phi_{\diag(A,B)}$.
This is clear from the explicit description of these maps given by
Lemma~\ref{haircut}.
\end{proof}

\begin{remark}\label{tosay}
The functor $\Sigma_n$ in Theorem~\ref{dthm} is certainly not faithful, but
it is {\em asymptotically faithful} in the sense that
it induces an
isomorphism 
\begin{equation}
\Hom_{\Schur}(\mu,\lambda) \stackrel{\sim}{\rightarrow} 
\Hom_{G_n}(\textstyle\bigwedge^\mu V_n, \bigwedge^\lambda V_n)
\end{equation}
for $n$
sufficiently large relative to $\lambda$ and $\mu$. In fact, 
if $\lambda,\mu\Vdash d$ then one just needs that $n \geq d$, as is clear
from the last part of Proposition~\ref{donuts}. Let
\begin{equation}
\Schur_n := \Schur / \mathcal J_n
\end{equation}
where $\mathcal J_n$ is the tensor ideal of $\Schur$ that
is the kernel of $\Sigma_n$.
Then $\Sigma_n$ induces an equivalence of symmetric  monoidal
categories between $\Schur_n$ and the full monoidal subcategory of
$\Tilt{G_n}$ generated by the exterior powers $\bigwedge^a V_n$ for all $a > 0$.
In fact, $\mathcal J_n$ is the tensor ideal of $\Schur$ generated by the morphisms
$1_{(m)}$ for all $m > n$; cf. Remark~\ref{upgrade}.
Together with Theorem~\ref{ckmrevised}, this
identifies $\Schur_n$ with the
polynomial web category for $GL_n$ from \cite[$\S$5]{CKM}.
This can be seen from \cite{CKM}, but also it
can be proved quite easily using the codeterminant basis from
Remark~\ref{codet}, as follows.

Note first that the tensor ideal $\mathcal
K_n$ of 
$\Schur$ generated by the morphisms
$1_{(m)}$ for all $m > n$ 
is contained in $\mathcal J_n$
as $\bigwedge^m V_n = 0$ for $m> n$.
Now take $\lambda,\mu\Vdash d$.
The codeterminants $\gamma_{P,Q}$
for $\kappa \vdash d$ with $\kappa_1 > n$,
$P \in \Std(\lambda,\kappa)$ and $Q \in \Std(\mu,\kappa)$
belong to $\mathcal K_n(\mu,\lambda)$ since their diagrams involve a
string of thickness $\kappa_1$.
Hence, $\Hom_{\Schur}(\mu,\lambda) / \mathcal K_n(\mu,\lambda)$
is spanned by all $\gamma_{P,Q}$
for $\kappa \vdash d$ with $\kappa_1 \leq n$,
$P \in \Std(\lambda,\kappa)$ and $Q \in \Std(\mu,\kappa)$.
In fact, we have that $\mathcal K_n(\mu,\lambda) = \mathcal
J_n(\mu,\lambda)$ (proving the assertion), 
and these codeterminants with $\kappa_1 \leq n$ give a basis for
$\Hom_{\Schur_n}(\mu,\lambda)
\cong \Hom_{G_n}(\bigwedge^\mu V_n, \bigwedge^\lambda V_n)$. This follows because
$$
%\dim_{\Schur / \mathcal J_n}(\mu,\lambda)
%=
\dim \Hom_{G_n}({\textstyle\bigwedge^\mu} V_n, {\textstyle\bigwedge^\lambda} V_n)
= \#\left\{(\kappa,P,Q)\:\bigg|\:\begin{array}{l}
\kappa\vdash d\text{ with }\kappa_1 \leq n,\\
P \in \Std(\lambda,\kappa), Q \in \Std(\mu,\kappa)
\end{array}
\right\}.
$$
(Proof: For $\kappa \vdash d$ with $\kappa_1 \leq n$, let $\kappa^T$ be
the transpose partition viewed as a weight in $X_n^+$.
By the Littlewood-Richardson rule and character considerations, the
tilting module 
$\bigwedge^\mu V_n$ has a $\Delta$-flag with sections
$\Delta_n(\kappa^T)$ for all such $\kappa$, each appearing with multiplicity
$\#\Std(\mu,\kappa)$.
Similarly
$\bigwedge^\lambda V_n$ has a $\nabla$-flag with sections
$\nabla_n(\kappa^T)$, each appearing with multiplicity
$\#\Std(\lambda,\kappa)$.
Now use $\dim \Ext^i_{G_n}(\Delta_n(\sigma),\nabla_n(\tau)) =
\delta_{\sigma,\tau} \delta_{i,0}$.)
\end{remark}

At last, all of the background is in place, and we can achieve the
main goal of the section.
The composition of the functor $\Sigma_n$ from Theorem~\ref{dthm} with the
quotient functor $Q:\Tilt{G_n} \rightarrow \TILT{G_n}$ gives us a full
monoidal functor
\begin{equation}
\widetilde{\Sigma}_n:\Schur \rightarrow \TILT{G_n}.
\end{equation}
We just need one more elementary observation.

\begin{lemma}\label{gray}
Suppose that $p > 0$ and $a$, $b$ are positive integers summing
to $p^m$.
The images under $\widetilde{\Sigma}_n$ of the two-fold merge and split
morphisms from (\ref{ms})
are both zero.
\end{lemma}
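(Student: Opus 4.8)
The plan is to reduce the lemma to a statement about two explicit morphisms of tilting modules and then apply the characterisation of negligible morphisms. By Theorem~\ref{dthm}, the functor $\Sigma_n$ carries the two-fold merge from (\ref{ms}) to the canonical projection
$$
\mathtt{m}\colon\ {\textstyle\bigwedge^a V_n\otimes\bigwedge^b V_n}\ \twoheadrightarrow\ {\textstyle\bigwedge^{a+b}V_n}
$$
and the two-fold split to the canonical inclusion $\mathtt{s}\colon\bigwedge^{a+b}V_n\hookrightarrow\bigwedge^a V_n\otimes\bigwedge^b V_n$. Since $\widetilde{\Sigma}_n=Q\circ\Sigma_n$ and $Q$ kills precisely the negligible morphisms of $\Tilt{G_n}$, it is enough to prove that $\mathtt{m}$ and $\mathtt{s}$ are negligible in $\Tilt{G_n}$.

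The first ingredient is relation (\ref{trivial}): applying $\Sigma_n$ to it yields $\mathtt{m}\circ\mathtt{s}=\binom{a+b}{a}\,\mathrm{id}_{\bigwedge^{a+b}V_n}$. Because $a+b=p^m$ with $0<a<p^m$, the classical Lucas theorem (\ref{classicallucas}) gives $\binom{p^m}{a}\equiv 0\pmod p$, hence $\mathtt{m}\circ\mathtt{s}=0$ in $\Tilt{G_n}$. (In the degenerate case $p^m>n$ one has $\bigwedge^{a+b}V_n=0$ and both maps are zero for trivial reasons, consistent with this.)

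The second ingredient is that the two relevant Hom-spaces are one-dimensional. In $\Schur$ there is a unique matrix in $\Theta_{(a+b),(a,b)}$, namely $\left(\begin{smallmatrix}a&b\end{smallmatrix}\right)$, and a unique matrix in $\Theta_{(a,b),(a+b)}$, namely $\left(\begin{smallmatrix}a\\b\end{smallmatrix}\right)$; hence by Definition~\ref{scat} both $\Hom_{\Schur}((a,b),(a+b))$ and $\Hom_{\Schur}((a+b),(a,b))$ are one-dimensional. Since $\Sigma_n$ is full (Theorem~\ref{dthm}), it follows that
$$
\Hom_{G_n}\!\big({\textstyle\bigwedge^a V_n\otimes\bigwedge^b V_n},\,{\textstyle\bigwedge^{a+b}V_n}\big)=\k\,\mathtt{m},\qquad
\Hom_{G_n}\!\big({\textstyle\bigwedge^{a+b}V_n},\,{\textstyle\bigwedge^a V_n\otimes\bigwedge^b V_n}\big)=\k\,\mathtt{s}.
$$

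The conclusion is then formal. For any $g\in\Hom_{G_n}(\bigwedge^{a+b}V_n,\bigwedge^a V_n\otimes\bigwedge^b V_n)$, write $g=c\,\mathtt{s}$; using cyclicity of the trace and the first ingredient, $\Tr(g\circ\mathtt{m})=\Tr(\mathtt{m}\circ g)=c\,\Tr(\mathtt{m}\circ\mathtt{s})=0$, so $\mathtt{m}$ is negligible. Symmetrically, for any $g\in\Hom_{G_n}(\bigwedge^a V_n\otimes\bigwedge^b V_n,\bigwedge^{a+b}V_n)$, write $g=c\,\mathtt{m}$; then $\Tr(g\circ\mathtt{s})=c\,\Tr(\mathtt{m}\circ\mathtt{s})=0$, so $\mathtt{s}$ is negligible. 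Therefore $\widetilde{\Sigma}_n$ sends both the two-fold merge and the two-fold split to $0$. I do not anticipate a genuine difficulty; the step that does the work is the identification of the two Hom-spaces as one-dimensional, since that is what allows the single vanishing $\mathtt{m}\circ\mathtt{s}=0$ to force each of $\mathtt{m}$ and $\mathtt{s}$ to be negligible on its own.
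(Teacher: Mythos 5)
Your proof is correct and rests on the same two pillars as the paper's: one-dimensionality of $\Hom(\bigwedge^{p^m}V_n,\bigwedge^a V_n\otimes\bigwedge^b V_n)$ and its counterpart, and the vanishing of $\mathtt{m}\circ\mathtt{s}=\binom{p^m}{a}\,\mathrm{id}$ via relation~(\ref{trivial}) and Lucas. The only cosmetic difference is that you verify the trace criterion for negligibility directly in $\Tilt{G_n}$, whereas the paper passes to $\TILT{G_n}$, invokes its semisimplicity, and observes that nonvanishing of either map would force $g\circ f$ to be an automorphism of the irreducible $\bigwedge^{p^m}V_n$, contradicting the computed zero; your route is marginally more self-contained but logically equivalent.
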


\begin{proof}
By weight considerations, 
$\Hom_{G_n}\left(\bigwedge^{p^m} V_n, \bigwedge^a V_n \otimes
\bigwedge^b V_n\right)$ is of dimension one with basis given by the two-fold
split.
So $\Hom_{\TILT{G_n}}\left(\bigwedge^{p^m} V_n, \bigwedge^a V_n \otimes
\bigwedge^b V_n\right)$ is spanned by the
image $f$ of the two-fold split.
Similarly,
the image $g$ of the two-fold merge spans
$\Hom_{\TILT{G_n}}\left(\bigwedge^a V_n \otimes
\bigwedge^b V_n, \bigwedge^{p^m} V_n\right)$.
By semisimplicity, if one of these morphisms is non-zero, so is the
other, and $g \circ f$
is an automorphism of $\bigwedge^{p^m} V_n$.
But this composition is zero by (\ref{trivial}).
\end{proof}

\begin{theorem}\label{fullity}
The functor $\widetilde{\Phi}_n:\Kar(\OB(t_0,\dots,t_r)) \rightarrow
\TILT{G_n}$ 
from (\ref{haveit}) is full.
\end{theorem}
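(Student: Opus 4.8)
The plan is to reduce the fullness of $\widetilde{\Phi}_n$ to a computation inside the Schur category, where Theorem~\ref{dthm} and Lemma~\ref{gray} do all the work. Since a full functor induces full functors on additive and Karoubi envelopes, it suffices to treat the case where $X$ and $Y$ are objects of $\OB(t_0,\dots,t_r)$ itself, i.e.\ words in $\up_0,\dots,\up_r,\down_0,\dots,\down_r$. Both source and target are rigid symmetric monoidal and $\widetilde{\Phi}_n$ preserves duality and the symmetric braiding, so the natural isomorphisms $\Hom(X,Y)\cong\Hom(\mathbbm{1},X^\vee\otimes Y)$ together with reordering of tensor factors reduce us to the case $X=\mathbbm{1}$ and $Y=\up_{a_1}\cdots\up_{a_p}\down_{b_1}\cdots\down_{b_q}$. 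Turning the down-strands around once more, $\Hom_{\OB(t_0,\dots,t_r)}(\mathbbm{1},Y)\cong\Hom_{\OB(t_0,\dots,t_r)}(\up_{b_1}\cdots\up_{b_q},\up_{a_1}\cdots\up_{a_p})$ compatibly with $\Hom_{\TILT{G_n}}(\mathbbm{1},\widetilde{\Phi}_nY)\cong\Hom_{\TILT{G_n}}(\bigwedge^\beta V_n,\bigwedge^\alpha V_n)$, where $\alpha:=(p^{a_1},\dots,p^{a_p})$ and $\beta:=(p^{b_1},\dots,p^{b_q})$ are strict compositions. If $|\alpha|\neq|\beta|$ both spaces vanish, so assume $|\alpha|=|\beta|=d$.

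By Theorem~\ref{dthm} the functor $\Sigma_n$ is full, hence so is $\widetilde{\Sigma}_n=Q\circ\Sigma_n$, and therefore $\Hom_{\TILT{G_n}}(\bigwedge^\beta V_n,\bigwedge^\alpha V_n)$ is spanned by the morphisms $\widetilde{\Sigma}_n(\xi_A)$ with $A\in\Theta_{\alpha,\beta}$. The next step is to show that $\widetilde{\Sigma}_n(\xi_A)=0$ unless $A$ is a monomial matrix. By Lemma~\ref{fixed}, $\xi_A=\xi_{A^-}\circ\xi_{A^\circ}\circ\xi_{A^+}$, where $\xi_{A^-}$ is a tensor product of merges combining the nonzero entries of the rows of $A$ (which sum to the values $\alpha_i=p^{a_i}$) and $\xi_{A^+}$ is a tensor product of splits of the values $\beta_j=p^{b_j}$ dictated by the columns of $A$. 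If some row of $A$ has at least two nonzero entries, then factoring the corresponding multi-fold merge through its final two-fold merge via (\ref{mss}) exhibits it as a composite ending in a two-fold merge into a power of $p$, which $\widetilde{\Sigma}_n$ sends to $0$ by Lemma~\ref{gray}; the symmetric argument with the transpose of (\ref{mss}) applies when some column has at least two nonzero entries. As $\widetilde{\Sigma}_n$ is monoidal, $\widetilde{\Sigma}_n(\xi_{A^-})$ or $\widetilde{\Sigma}_n(\xi_{A^+})$ then vanishes, so $\widetilde{\Sigma}_n(\xi_A)=0$. Hence $\widetilde{\Sigma}_n(\xi_A)\neq0$ forces $A$ to have a unique nonzero entry in every row and column; then $\alpha$ is a permutation of $\beta$ and $\xi_A=g\,1_\beta$, where $g$ is the permutation with $\alpha=g(\beta)$.

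It remains to see that for monomial $A$ the morphism $\widetilde{\Sigma}_n(\xi_A)=\widetilde{\Sigma}_n(g\,1_\beta)$ lies in the image of $\widetilde{\Phi}_n$. By Theorem~\ref{dthm} it is the signed permutation isomorphism $\bigwedge^\beta V_n\stackrel{\sim}{\to}\bigwedge^\alpha V_n$ permuting tensor factors, obtained from the decomposition of $g\,1_\beta$ into thick crossings, each of which $\Sigma_n$ sends to $v\otimes w\mapsto(-1)^{ab}w\otimes v$. Since $\bigwedge^\beta V_n=\widetilde{\Phi}_n(\up_{b_1}\cdots\up_{b_q})$, $\bigwedge^\alpha V_n=\widetilde{\Phi}_n(\up_{a_1}\cdots\up_{a_p})$, and $\widetilde{\Phi}_n$ sends the symmetric braiding in $\OB(t_0,\dots,t_r)$ to that in $\TILT{G_n}$, this isomorphism equals $\widetilde{\Phi}_n$ applied to the corresponding composite of colored crossings. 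Thus every spanning morphism of $\Hom_{\TILT{G_n}}(\bigwedge^\beta V_n,\bigwedge^\alpha V_n)$ is in the image of $\widetilde{\Phi}_n$, which by the reductions of the first paragraph proves fullness. The heart of the argument — and the only step that is not a formal manipulation with rigidity and the braiding — is the vanishing claim of the middle paragraph, namely that Lemma~\ref{gray} already suffices to annihilate every non-monomial $\xi_A$.
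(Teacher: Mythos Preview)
Your proof is correct and follows essentially the same approach as the paper's: reduce via rigidity and the braiding to words in $\up_i$'s, use the fullness of $\widetilde{\Sigma}_n$ from Theorem~\ref{dthm} to span the Hom space by images of $\xi_A$'s, kill all non-monomial $A$ via Lemma~\ref{gray}, and realize the surviving generalized permutations as images of colored crossings. The only imprecision is a sign: $\Sigma_n$ sends a thick crossing to $v\otimes w\mapsto(-1)^{ab}w\otimes v$, whereas $\widetilde{\Phi}_n$ sends the colored crossing to the unsigned flip, so your sentence ``this isomorphism equals $\widetilde{\Phi}_n$ applied to the corresponding composite of colored crossings'' should read ``equals up to sign''; this is harmless for surjectivity but worth stating. (Also, avoid using $p$ both for the prime and for the word length.)
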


\begin{proof}
Let $X$ and $Y$ be objects of $\OB(t_0,\dots,t_r)$,
so they are both words in the symbols
$\up_i$'s and $\down_i$'s for $i=0,\dots,r$.
Their images $\overline{X}$ and $\overline{Y}$
under the functor
$\widetilde{\Phi}_n$ are corresponding tensor products of
the modules $\bigwedge^{p^{i}} V_n$ and $\bigwedge^{p^{i}} V_n^*$,
notation as in (\ref{padic}).
We need to show
that the linear map
$$
\Hom_{\OB(t_0,\dots,t_r)}(X, Y) \rightarrow
\Hom_{\TILT{G_n}}(\overline{X}, \overline{Y})
$$
defined by the functor $\widetilde{\Phi}_n$ is surjective.
Since this is a symmetric monoidal functor, we may assume that all of
the $\down_i$'s in $X$ appear at the
beginning of this word.
Then using duality we can transfer them from the beginning of $X$ to 
$\up_i$'s appearing at the beginning of $Y$.
Thus we are reduced to the case that $X$ only involves $\up_i$'s.
Repeating the argument for $Y$, we reduce further to the case that $Y$
only involves $\up_i$'s too.

So now $X$ and $Y$ are words just in the symbols $\up_i$ for $i=0,\dots,r$,
and $\overline{X}$ and $\overline{Y}$ are corresponding tensor
products of the modules $\bigwedge^{p^i} V_n$, i.e.,
we have that $\overline{X} = \bigwedge^\mu V_n$ and $\overline{Y} =
\bigwedge^\lambda V_n$ for strict compositions $\lambda,\mu$ all of
whose parts are of the form $p^{i}$ for $i=0,\dots,r$.
Since the functor $\widetilde{\Sigma}_n$
is full, it follows that
$\Hom_{\TILT{G_n}}(\overline{X}, \overline{Y})$ is spanned by the
images of the morphisms $\xi_A$ for $A \in \Theta_{\lambda,\mu}$.
In view of Lemma~\ref{gray}, these images are zero unless $A = A^\circ$,
i.e., $\xi_A$ is a generalized permutation. As 
generalized permutations are generated by thick crossings,
and $\Sigma_n$ maps thick crossings to tensor flips (up to a
sign) 
according to Theorem~\ref{dthm},
it remains to observe that the 
tensor flip 
$$\textstyle
\bigwedge^{p^{i}} V_n \otimes \bigwedge^{p^{j}} V_n
\rightarrow 
\bigwedge^{p^{j}} V_n \otimes \bigwedge^{p^{i}} V_n,
\qquad
v \otimes w \mapsto w \otimes v
$$
is the image under
$\Phi_n$ of the crossing 
in $\OB(t_0,\dots,t_r)$ 
of strings of color $i$ and $j$.
\end{proof}

\section{Identification of labelings}\label{comb}

Let notation be as in (\ref{padic}),
and recall (\ref{interests})--(\ref{off}).
We have now proved the existence of a symmetric monoidal equivalence
\begin{equation}\label{absurd}
\Xi_n: \TILT{G_{n_0}} \boxtimes\cdots\boxtimes
\TILT{G_{n_r}}
\rightarrow \TILT{G_n}
\end{equation}
sending $V_{n_i} \in \TILT{G_{n_i}}$ to $\bigwedge^{p^{i}} V_n \in \TILT{G_n}$
for $i=0,\dots,r$.
To complete the proof of the Main Theorem, it remains to show that
$\Xi_n$ sends
$T_{n_0}(\lambda^{(0)}) \boxtimes\cdots \boxtimes
T_{n_r}(\lambda^{(r)})$ to $T_n(\imath(\underline{\lambda}))$
for
$\underline{\lambda} = (\lambda^{(0)},\dots,\lambda^{(r)}) \in
X_{n_0,p}^+\times\cdots\times X_{n_r,p}^+$.

Let $\Lambda_n^+ \subset X_n^+$ 
denote the set of polynomial dominant weights,
i.e., the weights $\lambda \in \Z^n$
such that $\lambda_1 \geq \cdots \geq \lambda_n \geq 0$.
Let $\Lambda_{n,p}^+ := \Lambda_n^+ \cap X_{n,p}^+$.
Let $\varpi_i = (1^i, 0^{n-i})$ 
be the highest weight of $\bigwedge^i V_n$ and $\det_n := \bigwedge^n V_n$ 
be the determinant representation.

\begin{lemma}\label{starters}
Given $0 \leq k \leq n$, we have that
$\binom{n}{k} \not\equiv 0\pmod{p}$ if and only if 
\begin{equation}\label{kform}
k = k_0+k_1p+\cdots + k_r p^{r}\quad\text{with}\quad 0 \leq
k_i \leq n_i
\text{ for all $i=0,\dots,r$}.
\end{equation}
Assuming this is the case,
the function $\imath$ takes
$(\varpi_{k_0},\dots,\varpi_{k_r}) \in X_{n_0}^+\times\cdots\times
X_{n_r}^+$
to $\varpi_k \in X_n^+$. Also the equivalence $\Xi_n$ sends
$\bigwedge^{k_0} V_{n_0}\boxtimes\cdots\boxtimes \bigwedge^{k_r}
V_{n_r}$ to a copy of $\bigwedge^k V_n$.
\end{lemma}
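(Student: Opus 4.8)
The plan is to establish the three assertions in order; the first two are quick, and the third is the real content.

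\textbf{Parts 1 and 2.} Writing $k=k_0+k_1p+\cdots+k_rp^r$ for the $p$-adic expansion of $k$, the classical Lucas theorem~\eqref{classicallucas} gives $\binom nk\equiv\prod_{i=0}^r\binom{n_i}{k_i}\pmod p$. Since $0\le n_i<p$, each $n_i!$ is a product of units of $\k$, so $\binom{n_i}{k_i}$ is a unit when $0\le k_i\le n_i$ and vanishes when $k_i>n_i$; hence $\binom nk\not\equiv 0$ exactly when \eqref{kform} holds. For Part 2, note that the $n$-tuple concatenating $\varpi_{k_0}$, then $p$ copies of $\varpi_{k_1}$, \dots, then $p^r$ copies of $\varpi_{k_r}$ is a $\{0,1\}$-vector whose number of nonzero entries is $k_0+k_1p+\cdots+k_rp^r=k$; its dominant conjugate is therefore $(1^k,0^{n-k})=\varpi_k$, which is $\imath(\varpi_{k_0},\dots,\varpi_{k_r})$.

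\textbf{Part 3: reduction.} For $0\le k_i\le n_i<p$ the module $\bigwedge^{k_i}V_{n_i}=T_{n_i}(\varpi_{k_i})$ is indecomposable tilting of dimension $\binom{n_i}{k_i}\in\k^\times$, hence an irreducible object of $\TILT{G_{n_i}}$ by Theorem~\ref{jonsversion}; so $\bigwedge^{k_0}V_{n_0}\boxtimes\cdots\boxtimes\bigwedge^{k_r}V_{n_r}$ is irreducible in the Deligne product and $\Xi_n$ takes it to an irreducible object of $\TILT{G_n}$. Since $\Xi_n$ is symmetric monoidal and $k_i<p$, it commutes with the formation of $k_i$-th exterior powers, so applied slotwise, together with $\Xi_n(V_{n_i})=\bigwedge^{p^i}V_n$, it yields
$$
\Xi_n\!\left({\textstyle\bigwedge^{k_0}}V_{n_0}\boxtimes\cdots\boxtimes{\textstyle\bigwedge^{k_r}}V_{n_r}\right)\;\cong\;Q(M),\qquad M:={\textstyle\bigwedge^{k_0}\!\big(\bigwedge^{p^0}}V_n\big)\otimes\cdots\otimes{\textstyle\bigwedge^{k_r}\!\big(\bigwedge^{p^r}}V_n\big)\in\Tilt{G_n},
$$
where $Q\colon\Tilt{G_n}\to\TILT{G_n}$ is the quotient functor. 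As $Q(M)$ is irreducible, $M$ has a unique non-negligible indecomposable summand; since $\bigwedge^kV_n=T_n(\varpi_k)$ is indecomposable and, by Part 1, of dimension $\binom nk\in\k^\times$ (hence non-negligible), it is enough to exhibit $\bigwedge^kV_n$ as a direct summand of $M$ in $\Tilt{G_n}$, as this then forces $Q(M)\cong\bigwedge^kV_n$.

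\textbf{Part 3: the summand.} Set $W_i:=\bigwedge^{k_ip^i}V_n$. The proof of Lemma~\ref{density} already shows (via the maps $f$, $g$ there, whose composite is a unit times the identity by Lucas) that $\bigwedge^kV_n$ is a direct summand of $W_0\otimes\cdots\otimes W_r$ in $\Tilt{G_n}$, so it suffices to check that each $W_i$ is a direct summand of $\bigwedge^{k_i}(\bigwedge^{p^i}V_n)$, for then $W_0\otimes\cdots\otimes W_r$ is a summand of $M$. If $p=2$ then $k_i\le n_i\le 1$, so $\bigwedge^{k_i}(\bigwedge^{p^i}V_n)=W_i$ and there is nothing to prove. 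If $p$ is odd then $p^i$ is odd, so the Koszul sign for transposing two blocks of length $p^i$ is $-1$; consequently the exterior-algebra multiplication $m_i\colon(\bigwedge^{p^i}V_n)^{\otimes k_i}\to W_i$ and comultiplication $\Delta_i\colon W_i\to(\bigwedge^{p^i}V_n)^{\otimes k_i}$ satisfy $m_i\circ g=(-1)^{\ell(g)}m_i$ and $g\circ\Delta_i=(-1)^{\ell(g)}\Delta_i$ for all $g\in S_{k_i}$. Hence both $m_i$ and $\Delta_i$ factor through the summand $\bigwedge^{k_i}(\bigwedge^{p^i}V_n)=\operatorname{im}\big(\tfrac1{k_i!}\sum_{g\in S_{k_i}}(-1)^{\ell(g)}g\big)$ of $(\bigwedge^{p^i}V_n)^{\otimes k_i}$ (the antisymmetriser being a genuine idempotent as $k_i<p$), say $m_i=\bar m_i\circ\pi_i$ and $\Delta_i=\iota_i\circ\bar\Delta_i$; and $\bar m_i\circ\bar\Delta_i=m_i\circ\Delta_i=\binom{k_ip^i}{p^i,\dots,p^i}\operatorname{id}_{W_i}$ by iterating relation~\eqref{trivial}. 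This multinomial coefficient has $p$-adic valuation $0$ (Kummer, or directly by Lucas), so $\bar\Delta_i$ is a split injection and $W_i$ is a summand of $\bigwedge^{k_i}(\bigwedge^{p^i}V_n)$, completing the proof.

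I expect the sign bookkeeping in the last paragraph—verifying that $m_i$ and $\Delta_i$ genuinely descend to the exterior power $\bigwedge^{k_i}(\bigwedge^{p^i}V_n)$—to be the main point; it is exactly here that the oddness of $p^i$ (and the trivialisation at $p=2$) enters, everything else being formal.
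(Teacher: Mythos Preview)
Your proof is correct and follows the same approach as the paper: identify the image under $\Xi_n$ of $\bigwedge^{k_0}V_{n_0}\boxtimes\cdots\boxtimes\bigwedge^{k_r}V_{n_r}$ with the image of the antisymmetrizers $e_i$ acting on $\big(\bigwedge^{p^i}V_n\big)^{\otimes k_i}$, note this is irreducible since $\Xi_n$ is an equivalence, and then exhibit $\bigwedge^k V_n$ as a summand using the splitting from the proof of Lemma~\ref{density}. Your final paragraph is in fact more careful than the paper, which asserts that the image of $e_i$ in $\big(\bigwedge^{p^i}V_n\big)^{\otimes k_i}$ literally \emph{equals} $\bigwedge^{k_ip^i}V_n$ in $\Tilt{G_n}$ (for $i\ge 1$ and $k_i\ge 2$ a dimension count shows this fails), whereas you correctly prove only that the latter is a direct summand of the former---which is all that is needed, and which your Koszul-sign and multinomial argument establishes cleanly.
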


\begin{proof}
The first statement follows from Lucas' theorem (\ref{classicallucas}).
Also it is easy to see that
$\imath((\varpi_{k_0},\dots,\varpi_{k_r})) = \varpi_k$ 
just 
using the combinatorial definition of $\imath$.
% ANOTHER CORRECTION HERE (RELATED TO THE ONE IN LEMMA 3.4
% PUBLISHED VERSION:
For the final assertion, note that $k_i < p$, so 
$\bigwedge^{k_i} V_{n_i}$ is the summand 
 of $V_{n_i}^{\otimes k_i}$
defined by the idempotent $\frac{1}{k_i!} \sum_{g \in S_{k_i}}
(-1)^{\ell(g)} g \in \k S_{k_i} = \End_{G_{n_i}}(V_{n_i}^{\otimes k_i})$.
So by the definition of the functor, 
$\Xi_n$ takes 
$\bigwedge^{k_0} V_{n_0}\boxtimes\cdots\boxtimes \bigwedge^{k_r}
V_{n_r} \in \TILT{G_{n_0}}\boxtimes\cdots\boxtimes \TILT{G_{n_r}}$ 
to the tensor product
$W_0 \otimes \cdots \otimes W_r \in \TILT{G_n}$
where $W_i = \bigwedge^{k_i}\left(\bigwedge^{p^{i}} V_n\right)$ 
is the summand of $\left(\bigwedge^{p^{i}}V_n\right)^{\otimes k_i}$
defined by 
$\frac{1}{k_i!} \sum_{g \in S_{k_i}}
(-1)^{\ell(g)} g$
viewed
as an idempotent endomorphism of
this new tensor space in the obvious way.
Equivalently, in the setup of Theorem~\ref{dthm}, this idempotent endomorphism is $f_i := \frac{1}{k_i!} \sum_{g \in S_{k_i}} \Sigma_n\left(g 1_{(p^i,\dots,p^i)}\right)$,
where
$g 1_{(p^i,\dots,p^i)}$ is the diagram of the permutation $g$ drawn with $k_i$ strings each of thickness $p^i$.
In particular, since $\Xi_n$ is an equivalence, this shows that 
$W_0 \otimes\cdots\otimes W_r$ is an irreducible object
in $\TILT{G_n}$.
It remains to observe that $W_0 \otimes\cdots\otimes W_r \cong
\bigwedge^k V_n$ in $\TILT{G_n}$.
This follows because $\bigwedge^k V_n$
is not zero in $\TILT{G_n}$ as it is of non-zero 
categorical dimension,
and it is a summand of $W_0 \otimes\cdots\otimes W_r$
in $\Tilt{G_n}$.
To prove the last assertion,
we already showed at the end of the proof of Lemma~\ref{density}
that $\bigwedge^k V_n$ is a summand of
$\bigwedge^{k_0} V_n
\otimes \bigwedge^{k_1 p} V_n
\otimes\cdots\otimes
\bigwedge^{k_r p^r} V_n$,
so it suffices to prove that
$\bigwedge^{k_i p^i} V_n$ is a summand of $W_i$
for each $i=0,1,\dots,r$.
Both of 
$\bigwedge^{k_i p^i} V_n$ and $W_i$
are summands of $\left(\bigwedge^{p^i} V_n\right)^{\otimes k_i}$ defined by the images of the idempotent $e_i$ from the proof of Lemma~\ref{density} 
and the idempotent $f_i$ here, respectively.
It remains to observe that $e_i f_i = e_i$
by (\ref{swallows}).
\end{proof}

\begin{corollary}\label{see}
The equivalence $\Xi_n$ sends
$\det_{n_0} \boxtimes \cdots \boxtimes \det_{n_r}$ to $\det_n$.
\end{corollary}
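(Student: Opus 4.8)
The plan is to deduce this as the special case $k_0 = n_0,\dots,k_r = n_r$ of Lemma~\ref{starters}. First I would recall that by definition $\det_{n_i} = \bigwedge^{n_i} V_{n_i}$ for each $i$, and likewise $\det_n = \bigwedge^n V_n$. Then I would apply Lemma~\ref{starters} with the digits $k_i := n_i$ of the $p$-adic decomposition (\ref{padic}). These clearly satisfy $0 \leq k_i \leq n_i$, and the associated integer $k = k_0 + k_1 p + \cdots + k_r p^r$ equals $n_0 + n_1 p + \cdots + n_r p^r = n$, which is precisely (\ref{padic}); thus (\ref{kform}) holds for $k = n$, or equivalently $\binom{n}{n} = 1 \not\equiv 0 \pmod p$, so the hypothesis of the lemma is met.

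Lemma~\ref{starters} then gives that $\Xi_n$ sends $\bigwedge^{n_0} V_{n_0} \boxtimes\cdots\boxtimes \bigwedge^{n_r} V_{n_r} = \det_{n_0}\boxtimes\cdots\boxtimes\det_{n_r}$ to a copy of $\bigwedge^n V_n = \det_n$ in $\TILT{G_n}$. Since $\det_n$ is one-dimensional, hence invertible, ``a copy of $\det_n$'' means an object isomorphic to $\det_n$, which is exactly the assertion of the corollary. As a consistency check, Lemma~\ref{starters} also records that $\imath$ carries $(\varpi_{n_0},\dots,\varpi_{n_r})$ to $\varpi_n = (1^n) \in X_n^+$, the highest weight of $\det_n$, so the statement is compatible with the identification of labelings.

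There is essentially no real obstacle beyond checking that the hypotheses of Lemma~\ref{starters} apply, and that check — that the $p$-adic digits $(n_0,\dots,n_r)$ of $n$ themselves constitute an admissible tuple in the sense of (\ref{kform}) with $k = n$ — is immediate from (\ref{padic}). So the proof will be just a few lines invoking Lemma~\ref{starters}.
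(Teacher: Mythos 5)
Your proof is correct and is exactly the argument the paper intends: the corollary is the special case $k_i = n_i$ (hence $k = n$) of Lemma~\ref{starters}, and the paper states it without proof precisely because this specialization is immediate.
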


Using Corollary~\ref{see}, the
problem in hand reduces easily to the case of
polynomial weights. 
To analyze polynomial weights, we need one more observation.
For $\lambda \in \Lambda_n^+$, let $\lambda^T$ be the usual transpose
partition. 
By the definitions, a weight $\lambda \in \Lambda_n^+$
belongs to $\Lambda_{n,p}^+$ if and
only if
\begin{equation}\label{shower}
\lambda^T = (\lambda^{(0)})^T+p (\lambda^{(1)})^T+ \cdots + p^{r}
(\lambda^{(r)})^T\quad
\text{with}\quad
\lambda^{(i)} \in \Lambda_{n_i,p}^+\text{ for }i=0,\dots,r.
\end{equation}
Choose $m \geq \lambda_1$. Then we can view 
all of the partitions in the decomposition (\ref{shower})
as elements of $\Lambda_m^+$.
Recall that $\lambda \in \Lambda_m^+$ is {\em $p$-restricted} if
$\lambda_i - \lambda_{i+1} < p$ for each $i=1,\dots,m-1$.
Since $n_i < p$ for each $i$, the weight $(\lambda^{(i)})^T$ 
has first part that is smaller than $p$, so it is certainly
$p$-restricted.
We deduce by the Steinberg tensor product theorem that
\begin{equation}\label{soap}
L_m(\lambda^T) 
\cong 
L_m((\lambda^{(0)})^T)
\otimes L_m((\lambda^{(1)})^T)^{[1]}
\otimes\cdots\otimes
L_m((\lambda^{(r)})^T)^{[r]},
\end{equation}
where $[k]$ denotes the $k$th Frobenius twist.
This observation will be used in the proof of the next result.

\begin{theorem}\label{hard}
For
 $\lambda \in \Lambda_n^+ \setminus \Lambda_{n,p}^+$, we have that
$\dim T_n(\lambda) \equiv 0\pmod{p}$.
If $\lambda \in \Lambda_{n,p}^+$, so that it is the image under $\imath$ of
some $(\lambda^{(0)},\dots,\lambda^{(r)}) \in
\Lambda_{n_0,p}^+\times\cdots\times \Lambda_{n_r,p}^+$, we have that
$T_n(\lambda)\cong
\Xi_n
\big(T_{n_0}(\lambda^{(0)}) \boxtimes\cdots \boxtimes
T_{n_r}(\lambda^{(r)})\big)$ in $\TILT{G_n}$
\end{theorem}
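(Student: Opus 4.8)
The plan is to transport the whole problem into the ``polynomial parts'' of the two categories and there match up triangular decompositions of tensor products of exterior powers.

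\textbf{Step 1: polynomial subcategories.} First I would show that $\Xi_n$ restricts to an equivalence between the Karoubian monoidal subcategory $\mathcal{P}'\subseteq\TILT{G_{n_0}}\boxtimes\cdots\boxtimes\TILT{G_{n_r}}$ generated (under tensor products, direct sums and summands, but \emph{not} duals) by the objects $V_{n_i}$, and the Karoubian monoidal subcategory $\mathcal{P}_n\subseteq\TILT{G_n}$ generated in the same way by the $\bigwedge^{p^{i}}V_n$. Using Lucas' theorem and the argument in the proof of Lemma~\ref{density}, $\mathcal{P}_n$ is precisely the image under $Q$ of the category of polynomial tilting modules for $G_n$, so its indecomposable objects are the $Q(T_n(\sigma))$ with $\sigma\in\Lambda_n^+$ and $\dim T_n(\sigma)\not\equiv 0\pmod p$; since $p>n_i$ for all $i$, every polynomial tilting module for $G_{n_i}$ survives in $\TILT{G_{n_i}}$, so $\mathcal{P}'$ has indecomposable objects $T_{n_0}(\nu^{(0)})\boxtimes\cdots\boxtimes T_{n_r}(\nu^{(r)})$ for $\underline\nu\in\Lambda_{n_0,p}^+\times\cdots\times\Lambda_{n_r,p}^+$. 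Thus $\Xi_n$ determines a bijection $\Phi$ from $\Lambda_{n_0,p}^+\times\cdots\times\Lambda_{n_r,p}^+$ onto $\{\sigma\in\Lambda_n^+:\dim T_n(\sigma)\not\equiv 0\}$ with $\Xi_n(T_{n_0}(\nu^{(0)})\boxtimes\cdots)\cong Q(T_n(\Phi(\underline\nu)))$. Since $\imath$ is a bijection onto $\Lambda_{n,p}^+$, proving that $\Phi=\imath$ yields both assertions of the theorem at once.

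\textbf{Steps 2 and 3: key identity and triangularity.} The computational heart is the identity
$$
\Xi_n\Big(\textstyle\bigwedge^{(\nu^{(0)})^T}V_{n_0}\boxtimes\cdots\boxtimes\bigwedge^{(\nu^{(r)})^T}V_{n_r}\Big)\;\cong\;\textstyle\bigwedge^{\nu^T}V_n\qquad\text{in }\TILT{G_n},
$$
valid for any tuple $\underline\nu=(\nu^{(0)},\dots,\nu^{(r)})$ of partitions with $\ell(\nu^{(i)})\le n_i$, where $\nu:=\imath(\underline\nu)$: it follows by applying Lemma~\ref{starters} to each factor $\bigwedge^{\nu^T_j}V_n$ (using that by (\ref{shower}) the $j$th part of $\nu^T$ is $\nu^T_j=\sum_i p^{i}(\nu^{(i)})^T_j$ with $0\le(\nu^{(i)})^T_j\le n_i<p$), together with monoidality of $\Xi_n$ and the standard identity $(A\boxtimes B)\otimes(A'\boxtimes B')=(A\otimes A')\boxtimes(B\otimes B')$. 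Now take $\underline\nu\in\Lambda_{n_0,p}^+\times\cdots\times\Lambda_{n_r,p}^+$ and decompose both sides into indecomposables. On the left, each $\bigwedge^{(\nu^{(i)})^T}V_{n_i}$ is $T_{n_i}(\nu^{(i)})$ plus a sum of summands $T_{n_i}(\tau)$ with $\tau\lhd\nu^{(i)}$, all surviving since $p>n_i$; hence $\bigwedge^{(\nu^{(0)})^T}V_{n_0}\boxtimes\cdots$ contains $T_{n_0}(\nu^{(0)})\boxtimes\cdots\boxtimes T_{n_r}(\nu^{(r)})$ with multiplicity one, all other summands being indexed by tuples $\underline\sigma$ strictly below $\underline\nu$ in the product order. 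On the right, $\bigwedge^{\nu^T}V_n=T_n(\nu)\oplus(\text{summands }T_n(\mu),\ \mu\lhd\nu)$, with $T_n(\nu)$ of multiplicity one because $\nu$ is the highest weight of $\bigwedge^{\nu^T}V_n$. Comparing the two decompositions under the equivalence $\Xi_n$ and inducting on $\underline\nu$ in the product order — using that $\imath$ is injective and strictly order-preserving, so that the inductive hypothesis identifies the source-side summands indexed by $\underline\sigma<\underline\nu$ with $Q(T_n(\imath(\underline\sigma)))$ for weights $\imath(\underline\sigma)\lhd\nu$ — forces $\Xi_n(T_{n_0}(\nu^{(0)})\boxtimes\cdots)\cong Q(T_n(\nu))$, i.e.\ $\Phi(\underline\nu)=\imath(\underline\nu)$ and in particular $\dim T_n(\nu)\not\equiv 0$. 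As $\Phi$ is a bijection onto $\{\sigma\in\Lambda_n^+:\dim T_n(\sigma)\not\equiv 0\}$ and $\imath$ a bijection onto $\Lambda_{n,p}^+$, the equality $\Phi=\imath$ also yields $\dim T_n(\lambda)\equiv 0\pmod p$ for $\lambda\in\Lambda_n^+\setminus\Lambda_{n,p}^+$. The combinatorial input here — that $\imath$ is injective and strictly order-preserving, together with the description (\ref{shower})--(\ref{soap}) of $\Lambda_{n,p}^+$ — is exactly where the Steinberg tensor product observation is convenient.

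\textbf{Main obstacle.} I expect the delicate point to be Step 3: turning ``the two triangular decompositions of $\bigwedge^{\nu^T}V_n$ correspond under $\Xi_n$'' into the clean conclusion $\Phi=\imath$. The lower-order terms on the two sides are not matched one-to-one a priori, so one genuinely has to run the induction on $\underline\nu$, verify that $\imath$ carries the product order on $\Lambda_{n_0,p}^+\times\cdots$ strictly into the dominance order on $\Lambda_n^+$ (so that all ``error terms'' sit strictly below $\nu$), and check that no surviving $T_n(\mu)$ with $\mu\lhd\nu$ can be confused with $Q(T_n(\nu))$. A parallel argument is needed to rule out, for $\lambda\notin\Lambda_{n,p}^+$, that $Q(T_n(\lambda))$ appears inside $\bigwedge^{\lambda^T}V_n$ at all: applying $\Xi_n^{-1}$ expresses $Q(\bigwedge^{\lambda^T}V_n)$ as a sum of $Q(T_n(\imath(\underline\tau)))$ with $\imath(\underline\tau)\in\Lambda_{n,p}^+$, hence $\ne\lambda$, forcing $Q(T_n(\lambda))=0$. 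Once $\Phi=\imath$ is established, the reduction of the full Main Theorem from polynomial weights to all of $X_n^+$ is immediate from Corollary~\ref{see} by tensoring with powers of $\det_n$.
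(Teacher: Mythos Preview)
Your overall architecture is sound and your Step~2 identity
\[
\Xi_n\!\left(\textstyle\bigwedge^{(\nu^{(0)})^T}V_{n_0}\boxtimes\cdots\boxtimes\bigwedge^{(\nu^{(r)})^T}V_{n_r}\right)\cong\textstyle\bigwedge^{\nu^T}V_n
\]
is correct and useful. But the inductive comparison in Step~3 has a genuine gap, precisely at the point you flag as the ``main obstacle.'' After the induction hypothesis identifies the lower summands on the source side with $Q(T_n(\imath(\underline\sigma)))$ for $\imath(\underline\sigma)\lhd\nu$, your equation reads
\[
Q(T_n(\Phi(\underline\nu)))\oplus\bigoplus_{\underline\sigma<\underline\nu}m_{\underline\sigma}\,Q(T_n(\imath(\underline\sigma)))
\;=\;
Q(T_n(\nu))\oplus\bigoplus_{\mu\lhd\nu}c_\mu\,Q(T_n(\mu)).
\]
This forces only $\Phi(\underline\nu)\unlhd\nu$. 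It does \emph{not} force $\Phi(\underline\nu)=\nu$, because you do not yet know $Q(T_n(\nu))\neq 0$: nothing you have proved rules out $Q(T_n(\nu))=0$ and $\Phi(\underline\nu)=\mu_0$ for some $\mu_0\lhd\nu$ with $\mu_0\in T$ but $\mu_0\notin\imath\{\underline\sigma:\underline\sigma\leq\underline\nu\}$. To exclude this you would need to know that every surviving $\mu_0\lhd\nu$ already lies in $\Lambda_{n,p}^+$, i.e.\ the very statement you defer to the ``parallel argument.'' And that parallel argument, as you sketch it (``applying $\Xi_n^{-1}$ expresses $Q(\bigwedge^{\lambda^T}V_n)$ as a sum of $Q(T_n(\imath(\underline\tau)))$\dots''), presupposes $\Phi=\imath$, so it is circular. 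The two halves of the theorem are genuinely entangled; neither can be finished first using only triangularity. (A minor side remark: the claim ``every polynomial tilting module for $G_{n_i}$ survives since $p>n_i$'' is false as stated---$T_{n_i}((p,0,\dots,0))$ does not survive---though the conclusion you draw is still correct because $\tau\unlhd\nu^{(i)}$ with $\nu^{(i)}\in\Lambda_{n_i,p}^+$ implies $\tau\in\Lambda_{n_i,p}^+$.)

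This is exactly why the paper does \emph{not} argue with $\bigwedge^{\nu^T}V_n$ and bare triangularity. Instead it removes one column at a time, writing $\lambda=\mu+\varpi_k$, and invokes \cite[Theorem~B(ii)]{LR} to get a \emph{precise} formula for the tilting multiplicities:
\[
\textstyle\bigwedge^k V_n\otimes T_n(\mu)\cong T_n(\lambda)\oplus\bigoplus_{\nu\lhd\lambda}T_n(\nu)^{\oplus[L_m(\nu^T)_k:L_{m-1}(\mu^T)]}.
\]
The point is that these branching multiplicities \emph{factor} via the Steinberg tensor product theorem (\ref{soap}) into the product $\prod_i[L_m((\nu^{(i)})^T)_{k_i}:L_{m-1}((\mu^{(i)})^T)]$, which matches exactly the multiplicities appearing on the source side after applying the same theorem of \cite{LR} to each $G_{n_i}$. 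With the multiplicities matched term by term, the leading summands are forced to correspond, and one gets $\Xi_n(T_{n_0}(\lambda^{(0)})\boxtimes\cdots)\cong T_n(\lambda)$ together with the vanishing statement in one stroke, by lexicographic induction on $\Lambda_n^+$. Your qualitative triangularity cannot substitute for this exact matching; to salvage your approach you would need an independent proof that $\dim T_n(\imath(\underline\nu))\not\equiv 0\pmod p$, which is essentially the content of the theorem.
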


\begin{proof}
We proceed by induction on the lexicographic ordering on $\Lambda_n^+$.
The base case $\lambda = 0$ is trivial as
$\Xi_n$ sends $\mathbbm{1}$ to $\mathbbm{1}$.
For the induction step, take $0 \neq \lambda \in \Lambda_n^+$
and pick $m \geq \lambda_1$.
Let $\mu \in \Lambda_n^+$ be obtained by removing some column of height
$0 < k \leq n$ from the Young diagram of $\lambda$, i.e., $\lambda =
\mu + \varpi_k$.
Then $T_n(\lambda)$ is a summand of $\bigwedge^k V_n \otimes T_n(\mu)$.
If $\lambda \in \Lambda_{n,p}^+$ then 
$\mu \in \Lambda_{n,p}^+$ 
too and $k$ is of the form (\ref{kform}). This follows from the
combinatorial definition of the function $\imath$.
By induction, $\mu \in \Lambda_{n,p}^+$ if and only if $\dim T_n(\mu)
\not\equiv 0 \pmod{p}$.

Suppose that
$\dim \bigwedge^k V_n \otimes T_n(\mu) \equiv 0 \pmod{p}$.
Then $\bigwedge^k V_n \otimes T_n(\mu)$ is zero in $\TILT{G_n}$ (as one
of the tensor factors is negligible),
hence, so is its summand $T_n(\lambda)$.
Thus, $\dim T_n(\lambda) \equiv 0 \pmod{p}$.
Using Lemma~\ref{starters} and the observations made at the end of previous paragraph,
we also have that $\lambda \notin \Lambda_{n,p}^+$ in this situation,
so this is consistent with what we are trying to prove.

Now suppose that 
$\dim \bigwedge^k V_n \otimes T_n(\mu) \not\equiv 0 \pmod{p}$.
Then we can write $k$ 
as 
$k_0+k_1p+\cdots+k_r p^{r}$
as in (\ref{kform})
and $\mu^T$ as
$(\mu^{(0)})^T+\cdots+p^{r} (\mu^{(r)})^T$
as in (\ref{shower}).
Note also that $\mu_1 \leq m-1$, so that we can view $\mu^T$
and all $(\mu^{(i)})^T$ here as elements of $\Lambda_{m-1}^+$.
By \cite[Theorem B(ii)]{LR}, we have that
$$
{\textstyle\bigwedge^k V_n} \otimes T_n(\mu)
\cong T_n(\lambda) \oplus \bigoplus_{\lambda \rhd \nu \in \Lambda^+_n}
T_n(\nu)^{\oplus \left[L_m(\nu^T)_k:L_{m-1}(\mu^T)\right]},
$$
where for a $G_m$-module $M$ we write $M_k$ for the sum of its weight
spaces for
all weights with $m$th coordinate equal to $k$, viewing this as
a module over the naturally embedded subgroup $G_{m-1}$.
By induction, $T_n(\nu)$ is zero in $\TILT{G_n}$ unless $\nu \in
\Lambda_{n,p}^+$.
So we deduce in $\TILT{G_n}$ that
\begin{equation}\label{toast}
{\textstyle\bigwedge^k V_n} \otimes T_n(\mu)
\cong T_n(\lambda) \oplus \bigoplus_{\lambda \rhd \nu \in \Lambda^+_{n,p}}
T_n(\nu)^{\oplus \big[L_m(\nu^T)_k:L_{m-1}(\mu^T)\big]}.
\end{equation}
Each $\nu$ here can be decomposed as 
$(\nu^{(0)})^T+\cdots+p^{r} (\nu^{(r)})^T$
according to (\ref{shower}), and then we can use the Steinberg
decomposition (\ref{soap}) to see that
\begin{equation}\label{beach}
\big[L_m(\nu^T)_k:L_{m-1}(\mu^T)\big]
=
\prod_{i=0}^r \big[L_m((\nu^{(i)})^T)_{k_i}:L_{m-1}((\mu^{(i)})^T)\big].
\end{equation}
Now we apply \cite[Theorem B(ii)]{LR} again
to see that
$$
{\textstyle\bigwedge^{k_i} V_{n_i}} \otimes T_{n_i}(\mu^{(i)})
\cong T_{n_i}(\lambda^{(i)}) \oplus \bigoplus_{\lambda^{(i)} \rhd \nu^{(i)} \in \Lambda^+_{n_i,p}}
T_{n_i}(\nu^{(i)})^{\oplus [L_m((\nu^{(i)})^T)_k:L_{m-1}((\mu^{(i)})^T)]}
$$
 in 
$\TILT{G_{n_i}}$,
where $\lambda^{(i)} := \mu^{(i)} + \varpi_{k_i} \in \Lambda_{n_i}^+$,
i.e., its Young diagram is obtained from the one for $\mu^{(i)}$ by
adding a column of height $k_i$
(we do not claim here that $\lambda^{(i)} \in \Lambda_{n_i,p}^+$
necessarily).
We deduce from this isomorphism for all $i=0,\dots,r$ plus
(\ref{beach}) 
that 
\begin{multline}
\left({\textstyle\bigwedge^{k_0} V_{n_0}\otimes 
T_{n_0}(\mu^{(0)})}\right)\boxtimes \cdots \boxtimes
\left({\textstyle\bigwedge^{k_r} V_{n_r}
\otimes T_{n_r}(\mu^{(r)})}
\right)
\cong
T_{n_0}(\lambda^{(0)}) \boxtimes\cdots\boxtimes T_{n_r}(\lambda^{(r)})
\\\oplus \bigoplus_{\mu \rhd \nu \in \Lambda_{n,p}^+}
\left(T_{n_0}(\nu^{(0)}) 
\boxtimes \cdots \boxtimes T_{n_r}(\nu^{(r)})\right)^{\oplus \big[L_m(\nu^T)_k:L_{m-1}(\mu^T)\big]}
\label{quick}\end{multline}
in $\TILT{G_{n_0}}\boxtimes\cdots\boxtimes \TILT{G_{n_r}}$,
for $\nu^{(i)}$ defined from
$\nu^T =  (\nu^{(0)})^T+p (\nu^{(1)})^T+\cdots+p^{r} (\nu^{(r)})^T$ again.
Now we apply the monoidal functor $\Xi_n$ to (\ref{quick}) using
Lemma~\ref{starters} and the induction hypothesis. Comparing the result with
(\ref{toast}) and using semisimplicity
shows finally 
that
$$
\Xi_n\left(T_{n_0}(\lambda^{(0)}) \boxtimes\cdots\boxtimes T_{n_r}(\lambda^{(r)})
\right)
\cong
T_n(\lambda)
$$
in $\TILT{G_n}$.
In particular, $\dim T_n(\lambda) \equiv 0\pmod{p}$ unless
$\lambda^{(i)} \in \Lambda_{n_i,p}^+$ for all $i=0,\dots,r$.
Since $\lambda$ is $\mu$ with a column of height
$k$ added and $\lambda^{(i)}$ is $\mu^{(i)}$ with a column of height
$k_i$ added, 
the weight $\lambda$ is the image of
$(\lambda^{(0)},\dots,\lambda^{(r)})$ under $\imath$. The induction
step now follows from this isomorphism.
\end{proof}

Theorem~\ref{hard} and Corollary~\ref{see} together complete the proof of the
Main Theorem.

\appendix
\section{Relations}

% Not sure why it didn't work automatically!

\def\theequation{A.\arabic{equation}}

In this appendix, we prove the relations formulated in $\S$\ref{fullness}.

To start with, we explain how to deduce (\ref{mergesplit}) from the relations
(\ref{assrel})--(\ref{trivial}) and the square switch relations
(\ref{sl2rel})--(\ref{sl2rel2}), interpreting thick crossings as the morphisms defined by (\ref{askthink}). 
Note for this that, in the presence of the square switch relations,
the definition (\ref{askthink}) is equivalent to 
\begin{equation}
\begin{tikzpicture}[anchorbase,scale=1]
	\draw[-,line width=1.2pt] (0,0) to (.6,1);
	\draw[-,line width=1.2pt] (0,1) to (.6,0);
        \node at (0,-.1) {$\scriptstyle a$};
        \node at (0.6,-0.1) {$\scriptstyle b$};
\end{tikzpicture}
:=\sum_{t=0}^{\min(a,b)}
(-1)^t
\begin{tikzpicture}[anchorbase,scale=1]
	\draw[-,thick] (0,0) to (0,1);
	\draw[-,thick] (-.015,0) to (-0.015,.2) to (-.57,.4) to (-.57,.6)
        to (-.015,.8) to (-.015,1);
	\draw[-,line width=1.2pt] (-0.6,0) to (-0.6,1);
        \node at (-0.6,-.1) {$\scriptstyle a$};
        \node at (0,-.1) {$\scriptstyle b$};
        \node at (0.13,.5) {$\scriptstyle t$};
\end{tikzpicture}\label{askthink2}.
\end{equation}
This is an easy exercise.
Now let notation be as in (\ref{mergesplit}) and set $r := d-a$. We just treat the case $r \geq 0$; the other case $r
\leq 0$ then follows by reflecting in a vertical axis and using (\ref{askthink2}). 
We must prove that
\begin{equation}\label{want}
\begin{tikzpicture}[anchorbase,scale=1]
	\draw[-,line width=1.2pt] (0,0) to (.285,.3) to (.285,.7) to (0,1);
	\draw[-,line width=1.8pt] (.61,0) to (.325,.3) to (.325,.7) to (.61,1);
        \node at (0,1.13) {$\scriptstyle b$};
        \node at (0.6,1.13) {$\scriptstyle a+r$};
        \node at (0,-.1) {$\scriptstyle a$};
        \node at (0.6,-.1) {$\scriptstyle b+r$};
\end{tikzpicture}
=
\sum_{s=0}^{\min(a,b)}
\begin{tikzpicture}[anchorbase,scale=1]
	\draw[-,thick] (0.58,0) to (0.58,.2) to (.02,.8) to (.02,1);
	\draw[-,thick] (0.02,0) to (0.02,.2) to (.58,.8) to (.58,1);
	\draw[-,thin] (0,0) to (0,1);
	\draw[-,line width=1pt] (0.61,0) to (0.61,1);
        \node at (0,1.13) {$\scriptstyle b$};
        \node at (0.6,1.13) {$\scriptstyle a+r$};
        \node at (0,-.1) {$\scriptstyle a$};
        \node at (0.6,-.1) {$\scriptstyle b+r$};
        \node at (-0.1,.5) {$\scriptstyle s$};
        \node at (0.92,.5) {$\scriptstyle r+s$};
\end{tikzpicture},
\end{equation}
We first substitute the definition (\ref{askthink}) into the right hand
side of (\ref{want}), using (\ref{assrel})--(\ref{trivial}),  to get
\begin{equation}
\sum_{s=0}^{\min(a,b)}
\sum_{t=0}^{\min(a,b)-s}
(-1)^t
\begin{tikzpicture}[anchorbase,scale=1]
	\draw[-,thin] (0,0) to (0,1);
	\draw[-,thick] (0.02,0) to (0.02,.2) to (.7,.3) to (.7,.7) to (1.08,.8) to (1.08,1);
	\draw[-,thick] (1.08,0) to (1.08,.2) to (.7,.3) to (.7,.7) to (.02,.8) to (.02,1);
	\draw[-,thin] (.01,0) to (0.01,.22) to (.35,.27) to (.35,.73) to
        (0.01,.78) to (.01,1);
	\draw[-,thin] (1.1,0) to (1.1,1);
        \node at (0,1.13) {$\scriptstyle b$};
        \node at (1.13,1.13) {$\scriptstyle a+r$};
        \node at (0,-.1) {$\scriptstyle a$};
        \node at (1.13,-.1) {$\scriptstyle b+r$};
        \node at (-0.1,.5) {$\scriptstyle s$};
        \node at (1.4,.5) {$\scriptstyle r+s$};
        \node at (0.25,.5) {$\scriptstyle t$};
\end{tikzpicture}
=
\sum_{s=0}^{\min(a,b)}
\sum_{u=s}^{\min(a,b)}
(-1)^{s+u}
\binom{u}{s}
\begin{tikzpicture}[anchorbase,scale=1]
	\draw[-,thin] (0,0) to (0,1);
	\draw[-,thick] (0.02,0) to (0.02,.2) to (.56,.35) to (.56,.65) to (1.08,.8) to (1.08,1);
	\draw[-,thick] (1.08,0) to (1.08,.2) to (.56,.35) to (.56,.65) to (.02,.8) to (.02,1);
	\draw[-,thin] (1.1,0) to (1.1,1);
        \node at (0,1.13) {$\scriptstyle b$};
        \node at (1.13,1.13) {$\scriptstyle a+r$};
        \node at (0,-.1) {$\scriptstyle a$};
        \node at (1.13,-.1) {$\scriptstyle b+r$};
        \node at (-0.11,.5) {$\scriptstyle u$};
        \node at (1.4,.5) {$\scriptstyle r+s$};
\end{tikzpicture}.\label{ohgood}
\end{equation}
Then we square switch
to see that this equals
$$
\sum_{s=0}^{\min(a,b)}
\sum_{u=s}^{\min(a,b)}
\sum_{t=u-s}^{\min(a-s,b-s)}
(-1)^{s+u}
\binom{u}{s}
\binom{u+r}{t}
\begin{tikzpicture}[anchorbase,scale=1]
	\draw[-,thin] (0,0) to (0,1);
	\draw[-,thick] (0.02,0) to (0.02,.2) to (.86,.35) to
        (.86,.65) to (.02,.8) to (.02,1);
        \draw[-,line width=.5pt] (.86,.655) to (1.48,.575) to (1.48,1);
	\draw[-,line width=.5pt] (1.48,0) to (1.48,.425) to (.86,.345);
	\draw[-,thin] (1.485,0) to (1.485,1);
        \node at (0,1.13) {$\scriptstyle b$};
        \node at (1.13,1.13) {$\scriptstyle a+r$};
        \node at (0,-.1) {$\scriptstyle a$};
        \node at (1.13,-.1) {$\scriptstyle b+r$};
        \node at (-0.11,.5) {$\scriptstyle u$};
        \node at (.5,.5) {$\scriptstyle s+\!t\!-\!u$};
\end{tikzpicture}\:.
$$
Using (\ref{assrel})--(\ref{trivial}) again, 
this simplifies to
\begin{equation*}
\sum_{s=0}^{\min(a,b)}
\sum_{u=s}^{\min(a,b)}
\sum_{v=u}^{\min(a,b)}
(-1)^{s+u}
\binom{u}{s}
\binom{u+r}{v-s}
\binom{v}{u}
\begin{tikzpicture}[anchorbase,scale=1]
	\draw[-,thin] (0,0) to (0,1);
	\draw[-,thick] (0.02,0) to (0.02,.2) to (.66,.35) to
        (.66,.65) to (.02,.8) to (.02,1);
	\draw[-,thin] (.67,0) to (.67,1);
        \node at (0,1.13) {$\scriptstyle b$};
        \node at (.67,1.13) {$\scriptstyle a+r$};
        \node at (0,-.1) {$\scriptstyle a$};
        \node at (.67,-.1) {$\scriptstyle b+r$};
        \node at (-0.11,.5) {$\scriptstyle v$};
\end{tikzpicture}.
\end{equation*}
Next, switch the orders of the summations 
to get
$$
\sum_{v=0}^{\min(a,b)}
(-1)^v
\sum_{s=0}^v
(-1)^{s}
\left(\sum_{u=s}^{v}
 (-1)^{u-v}
\binom{u}{s}
\binom{v}{u}
\binom{u+r}{v-s}
\right) \begin{tikzpicture}[anchorbase,scale=1]
	\draw[-,thin] (0,0) to (0,1);
	\draw[-,thick] (0.02,0) to (0.02,.2) to (.66,.35) to
        (.66,.65) to (.02,.8) to (.02,1);
	\draw[-,thin] (.67,0) to (.67,1);
        \node at (0,1.13) {$\scriptstyle b$};
        \node at (.67,1.13) {$\scriptstyle a+r$};
        \node at (0,-.1) {$\scriptstyle a$};
        \node at (.67,-.1) {$\scriptstyle b+r$};
        \node at (-0.11,.5) {$\scriptstyle v$};
\end{tikzpicture}.
$$
The term in parentheses is equal to $\binom{v}{s}$; to see this, take
the identity from Lemma~\ref{A}, replace $m,n,r$ and $s$ with
$s+r, v-s, u-s$ and $v-u$, respectively, then multiply both sides by $\binom{v}{s}$.
Hence, we have
$$
\sum_{v=0}^{\min(a,b)}(-1)^v 
\left(\sum_{s=0}^v
(-1)^{s} \binom{v}{s}
\right)
\begin{tikzpicture}[anchorbase,scale=1]
	\draw[-,thin] (0,0) to (0,1);
	\draw[-,thick] (0.02,0) to (0.02,.2) to (.66,.35) to
        (.66,.65) to (.02,.8) to (.02,1);
	\draw[-,thin] (.67,0) to (.67,1);
        \node at (0,1.13) {$\scriptstyle b$};
        \node at (.67,1.13) {$\scriptstyle a+r$};
        \node at (0,-.1) {$\scriptstyle a$};
        \node at (.67,-.1) {$\scriptstyle b+r$};
        \node at (-0.11,.5) {$\scriptstyle v$};
\end{tikzpicture}
=
\sum_{v=0}^{\min(a,b)}(-1)^v 
\delta_{v,0}
\begin{tikzpicture}[anchorbase,scale=1]
	\draw[-,thin] (0,0) to (0,1);
	\draw[-,thick] (0.02,0) to (0.02,.2) to (.66,.35) to
        (.66,.65) to (.02,.8) to (.02,1);
	\draw[-,thin] (.67,0) to (.67,1);
        \node at (0,1.13) {$\scriptstyle b$};
        \node at (.67,1.13) {$\scriptstyle a+r$};
        \node at (0,-.1) {$\scriptstyle a$};
        \node at (.67,-.1) {$\scriptstyle b+r$};
        \node at (-0.11,.5) {$\scriptstyle v$};
\end{tikzpicture}
=
\begin{tikzpicture}[anchorbase,scale=1]
%	\draw[-,thin] (0,0) to (0,1);
	\draw[-,thick] (0.02,0) to (0.02,.2) to (.66,.35) to
        (.66,.65) to (.02,.8) to (.02,1);
	\draw[-,thin] (.67,0) to (.67,1);
        \node at (0,1.13) {$\scriptstyle b$};
        \node at (.67,1.13) {$\scriptstyle a+r$};
        \node at (0,-.1) {$\scriptstyle a$};
        \node at (.67,-.1) {$\scriptstyle b+r$};
\end{tikzpicture},
$$
which is the left hand side of (\ref{want}).

\begin{lemma}\label{A}
Let $\binom{m}{r,s}$ be the trinomial coefficient 
$m(m-1)\cdots (m-r-s+1) / r!s!$ (interpreted as $0$ if $r < 0$ or $s
< 0$).
For $m \in \Z$ and $n \geq 0$, we have that
$$
\sum_{r+s=n} (-1)^s
\binom{m+r}{r,s} = 1.
$$
\end{lemma}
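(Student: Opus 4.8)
The plan is to reduce this trinomial identity to the classical Vandermonde convolution. First I would observe that the trinomial coefficient factors as $\binom{m+r}{r,s} = \binom{m+r}{r}\binom{m}{s}$: indeed both sides equal the product of $r+s$ consecutive terms $(m+r)(m+r-1)\cdots(m-s+1)$ divided by $r!\,s!$, which follows at once from the definitions. (Here $r,s \geq 0$ throughout the sum $r+s=n$, so the degenerate conventions for negative arguments never intervene and need no special treatment.) Thus the left-hand side of the claimed identity becomes $\sum_{s=0}^{n} (-1)^s \binom{m}{s}\binom{m+n-s}{n-s}$.

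Next I would apply the upper-negation rule $\binom{x}{k} = (-1)^k \binom{k-x-1}{k}$, which holds for all $x \in \Z$ (in fact as an identity of polynomials in $x$): taking $x = m+n-s$ and $k = n-s$ gives $\binom{m+n-s}{n-s} = (-1)^{n-s}\binom{-m-1}{n-s}$. After substituting, the combined sign $(-1)^s(-1)^{n-s} = (-1)^n$ factors out of the summation, leaving $(-1)^n \sum_{s=0}^{n} \binom{m}{s}\binom{-m-1}{n-s}$. Now the Vandermonde convolution $\sum_{s=0}^n \binom{a}{s}\binom{b}{n-s} = \binom{a+b}{n}$, valid for arbitrary $a,b \in \Z$ since both sides are polynomial, collapses this to $(-1)^n \binom{-1}{n}$. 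Finally $\binom{-1}{n} = (-1)(-2)\cdots(-n)/n! = (-1)^n$, so the product is $(-1)^n(-1)^n = 1$, as desired.

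There is no serious obstacle here; the only care needed is the routine bookkeeping of signs in the negation and Vandermonde steps. As an alternative I would keep in reserve a one-line generating-function argument: multiplying the identities $(1-x)^{-(m+1)} = \sum_{r \geq 0} \binom{m+r}{r} x^r$ and $(1-x)^{m} = \sum_{s \geq 0} \binom{m}{s}(-x)^s$ yields $(1-x)^{-1} = \sum_{n \geq 0} x^n$, and reading off the coefficient of $x^n$ gives exactly $\sum_{r+s=n}(-1)^s\binom{m+r}{r}\binom{m}{s} = 1$; this is rigorous for every integer $m \geq 0$, and since both sides of the asserted identity are polynomials in $m$, agreement for all $m \geq 0$ forces agreement for all $m \in \Z$.
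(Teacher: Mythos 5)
Your proof is correct but takes a genuinely different route from the paper's. The paper proves the identity by establishing the Pascal-type recurrence
$\binom{m}{r,s} = \binom{m-1}{r,s} + \binom{m-1}{r-1,s} + \binom{m-1}{r,s-1}$
and using induction on $n$ to show that $\sum_{r+s=n}(-1)^s\binom{m+r}{r,s}$ is independent of $m$, after which it suffices to note the $m=0$ case is trivial; this is self-contained and requires no prior identities. Your argument instead first factors the trinomial coefficient as $\binom{m+r}{r,s}=\binom{m+r}{r}\binom{m}{s}$ (a correct and slick observation, since both are the same product of $r+s$ consecutive factors divided by $r!\,s!$), then applies upper negation and Vandermonde convolution to collapse the sum. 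This is more computational but leans on the well-known machinery for binomial identities; the polynomial-in-$m$ extension at the end is also sound and neatly handles the case of negative $m$. Your generating-function variant, multiplying $(1-x)^{-(m+1)}$ and $(1-x)^m$ to get $(1-x)^{-1}$, is arguably the cleanest of all three and makes the structural reason for the identity transparent. All three arguments are valid; the paper's has the virtue of not needing to quote any standard identities, while yours makes the identity a one-liner given the classical toolkit.
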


\begin{proof}
Use the recurrence relation
$\binom{m}{r,s} = \binom{m-1}{r,s} + \binom{m-1}{r-1,s} +
\binom{m-1}{r,s-1}$
and induction on $n$ to show that 
$$
\sum_{r+s=n} (-1)^s
\binom{m+r}{r,s} = 
\sum_{r+s=n} (-1)^s
\binom{m-1+r}{r,s}.
$$
Hence, we may assume that $m=0$, when the identity is clear.
\end{proof}

In the remainder of the appendix, we work in the category $\Web$
as defined in Definition~\ref{webcat}, so have the defining relations
(\ref{assrel})--(\ref{mergesplit}), and will prove the relations 
(\ref{jonsquare})--(\ref{braid}).
In particular, this shows that the relations
(\ref{assrel})--(\ref{mergesplit})
imply the square switch relations, justifying the equivalence of
presentations asserted in
Remark~\ref{mp}.

\vspace{2mm}
\noindent
{\em Proof of (\ref{jonsquare}).}
Note $a \geq d$.
To prove the first equality, we expand the left hand side 
as a sum of diagrams involving a crossing
using (\ref{mergesplit}), to see that
\begin{align*}
\begin{tikzpicture}[anchorbase,scale=1]
	\draw[-,thick] (0,0) to (0,1);
	\draw[-,thick] (0.015,0) to (0.015,.2) to (.57,.4) to (.57,.6)
        to (.015,.8) to (.015,1);
	\draw[-,line width=1.2pt] (0.6,0) to (0.6,1);
        \node at (0.6,-.1) {$\scriptstyle b$};
        \node at (0,-.1) {$\scriptstyle a$};
        \node at (0.3,.82) {$\scriptstyle c$};
        \node at (0.3,.19) {$\scriptstyle d$};
\end{tikzpicture}
&=
\sum_{t=\max(0,c-b)}^{\min(c,d)}
\begin{tikzpicture}[anchorbase,scale=1]
	\draw[-,thin] (0,0) to (0,1);
	\draw[-,thick] (0.02,0) to (0.02,.2) to (.65,.5) to (1.08,.7) to (1.08,1);
	\draw[-,thick] (1.08,0) to (1.08,.3) to (.65,.5) to (.02,.8) to (.02,1);
	\draw[-,line width=.7pt] (.01,0) to (0.01,.21) to (.35,.37) to (.35,.63) to
        (0.01,.79) to (.01,1);
	\draw[-,thin] (1.1,0) to (1.1,1);
        \node at (0,-.1) {$\scriptstyle a$};
        \node at (1.13,-.1) {$\scriptstyle b$};
        \node at (-0.3,.5) {$\scriptstyle a-d$};
        \node at (0.26,.5) {$\scriptstyle t$};
        \node at (.2,.18) {$\scriptstyle d$};
        \node at (.2,.84) {$\scriptstyle c$};
\end{tikzpicture}.
\end{align*}
Then use (\ref{assrel})--(\ref{trivial}).
A similar argument establishes the first equality in
(\ref{jonsquare2}).
Then to prove the second equality in (\ref{jonsquare}), we 
use the first equality from (\ref{jonsquare2})
to expand the
right hand side, with the variable $t$ replaced by $u$, to see that it equals
$$
\sum_{u=\max(0,c-b)}^{\min(c,d)}
\sum_{t=u}^{\min(c,d)}
\binom{a-b+c-d}{u}
\binom{b-c+t}{t-u}
\begin{tikzpicture}[anchorbase,scale=1]
	\draw[-,thin] (0,0) to (0,1);
	\draw[-,thick] (0.02,0) to (0.02,.2) to (.88,.8) to (.88,1);
	\draw[-,thick] (0.88,0) to (0.88,.2) to (.02,.8) to (.02,1);
	\draw[-,thin] (0.9,0) to (0.9,1);
        \node at (0,-.1) {$\scriptstyle a$};
        \node at (.9,-.1) {$\scriptstyle b$};
        \node at (.33,.18) {$\scriptstyle c-t$};
        \node at (.34,.85) {$\scriptstyle d-t$};
\end{tikzpicture}.
$$
Now switch the summations and use the standard binomial coefficient
identity
$$
\sum_{u=0}^t \binom{a-b+c-d}{u} \binom{b-c+t}{t-u} = \binom{a-d+t}{t}.
$$
(Proof: Compute $x^t$-coefficients in
$(1+x)^{a-b+c-d}(1+x)^{b-c+t} = (1+x)^{a-d+t}$ in two different ways.)

\vspace{2mm}
\noindent
{\em Proof of (\ref{jonsquare2}).} This follows by reflecting (\ref{jonsquare})
in a vertical axis.

\vspace{2mm}
\noindent
{\em Proof of (\ref{thickcrossing}).}
The first equality is immediate from
the $r=0$
case of (\ref{mergesplit}).
Also the final equality follows from the middle one on reflecting
in a vertical axis. It remains to establish the middle one.
For this, we proceed by induction on $a+b$. The base case $a=b=1$
reduces to the first equality.
For the induction step, we have by the first equality and the induction hypothesis that
$$
\begin{tikzpicture}[anchorbase,scale=1]
	\draw[-,line width=1.2pt] (0,0) to (.6,1);
	\draw[-,line width=1.2pt] (0,1) to (.6,0);
        \node at (0,-.1) {$\scriptstyle a$};
        \node at (0.6,-0.1) {$\scriptstyle b$};
\end{tikzpicture}
=
\begin{tikzpicture}[anchorbase,scale=1]
	\draw[-,line width=1.2pt] (0,0) to (.285,.3) to (.285,.7) to (0,1);
	\draw[-,line width=1.2pt] (.6,0) to (.315,.3) to (.315,.7) to (.6,1);
%        \node at (0,1.13) {$\scriptstyle b$};
%        \node at (0.6,1.13) {$\scriptstyle a$};
        \node at (0,-.1) {$\scriptstyle a$};
        \node at (0.6,-.1) {$\scriptstyle b$};
\end{tikzpicture}
-\sum_{t=1}^{\min(a,b)}
\begin{tikzpicture}[anchorbase,scale=1]
	\draw[-,thin] (0,0) to (0,1);
	\draw[-,thick] (0.02,0) to (0.02,.2) to (.58,.8) to (.58,1);
	\draw[-,thick] (0.58,0) to (0.58,.2) to (.02,.8) to (.02,1);
	\draw[-,thin] (0.6,0) to (0.6,1);
 %       \node at (0,1.13) {$\scriptstyle b$};
  %      \node at (.6,1.13) {$\scriptstyle a$};
        \node at (0,-.1) {$\scriptstyle a$};
        \node at (0.6,-.1) {$\scriptstyle b$};
        \node at (-0.1,.5) {$\scriptstyle t$};
        \node at (0.7,.5) {$\scriptstyle t$};
\end{tikzpicture}
=
\begin{tikzpicture}[anchorbase,scale=1]
	\draw[-,line width=1.2pt] (0,0) to (.285,.3) to (.285,.7) to (0,1);
	\draw[-,line width=1.2pt] (.6,0) to (.315,.3) to (.315,.7) to (.6,1);
%        \node at (0,1.13) {$\scriptstyle b$};
%        \node at (0.6,1.13) {$\scriptstyle a$};
        \node at (0,-.1) {$\scriptstyle a$};
        \node at (0.6,-.1) {$\scriptstyle b$};
\end{tikzpicture}
- \sum_{t=1}^{\min(a,b)}
\sum_{s=0}^{\min(a,b)-t}
(-1)^s
\begin{tikzpicture}[anchorbase,scale=1]
	\draw[-,thin] (0,0) to (0,1);
	\draw[-,thick] (0.02,0) to (0.02,.2) to (.7,.3) to (.7,.7) to (1.08,.8) to (1.08,1);
	\draw[-,thick] (1.08,0) to (1.08,.2) to (.7,.3) to (.7,.7) to (.02,.8) to (.02,1);
	\draw[-,thin] (.01,0) to (0.01,.22) to (.35,.27) to (.35,.73) to
        (0.01,.78) to (.01,1);
	\draw[-,thin] (1.1,0) to (1.1,1);
        \node at (0,-.1) {$\scriptstyle a$};
        \node at (1.13,-.1) {$\scriptstyle b$};
        \node at (-0.1,.5) {$\scriptstyle t$};
        \node at (1.25,.5) {$\scriptstyle t$};
        \node at (0.25,.5) {$\scriptstyle s$};
\end{tikzpicture}.
$$
We saw a similar expression to this before in (\ref{ohgood});
 we showed there just using the relations
(\ref{assrel})--(\ref{trivial}) and the square switch relations
established now by (\ref{jonsquare})--(\ref{jonsquare2}) that
$$
\sum_{t=0}^{\min(a,b)}
\sum_{s=0}^{\min(a,b)-t}
(-1)^s
\begin{tikzpicture}[anchorbase,scale=1]
	\draw[-,thin] (0,0) to (0,1);
	\draw[-,thick] (0.02,0) to (0.02,.2) to (.7,.3) to (.7,.7) to (1.08,.8) to (1.08,1);
	\draw[-,thick] (1.08,0) to (1.08,.2) to (.7,.3) to (.7,.7) to (.02,.8) to (.02,1);
	\draw[-,thin] (.01,0) to (0.01,.22) to (.35,.27) to (.35,.73) to
        (0.01,.78) to (.01,1);
	\draw[-,thin] (1.1,0) to (1.1,1);
        \node at (0,-.1) {$\scriptstyle a$};
        \node at (1.13,-.1) {$\scriptstyle b$};
        \node at (-0.1,.5) {$\scriptstyle t$};
        \node at (1.25,.5) {$\scriptstyle t$};
        \node at (0.25,.5) {$\scriptstyle s$};
\end{tikzpicture}=
\begin{tikzpicture}[anchorbase,scale=1]
	\draw[-,line width=1.2pt] (0,0) to (.285,.3) to (.285,.7) to (0,1);
	\draw[-,line width=1.2pt] (.6,0) to (.315,.3) to (.315,.7) to (.6,1);
%        \node at (0,1.13) {$\scriptstyle b$};
%        \node at (0.6,1.13) {$\scriptstyle a$};
        \node at (0,-.1) {$\scriptstyle a$};
        \node at (0.6,-.1) {$\scriptstyle b$};
\end{tikzpicture}.
$$
Thus, we have shown that
$$
\begin{tikzpicture}[anchorbase,scale=1]
	\draw[-,line width=1.2pt] (0,0) to (.6,1);
	\draw[-,line width=1.2pt] (0,1) to (.6,0);
        \node at (0,-.1) {$\scriptstyle a$};
        \node at (0.6,-0.1) {$\scriptstyle b$};
\end{tikzpicture}
=\sum_{s=0}^{\min(a,b)}
(-1)^s
\begin{tikzpicture}[anchorbase,scale=1]
	\draw[-,thick] (0.02,0) to (0.02,.2) to (.7,.3) to (.7,.7) to (1.08,.8) to (1.08,1);
	\draw[-,thick] (1.08,0) to (1.08,.2) to (.7,.3) to (.7,.7) to (.02,.8) to (.02,1);
	\draw[-,thin] (.01,0) to (0.01,.22) to (.35,.27) to (.35,.73) to
        (0.01,.78) to (.01,1);
        \node at (0,-.1) {$\scriptstyle a$};
        \node at (1.13,-.1) {$\scriptstyle b$};
        \node at (0.25,.5) {$\scriptstyle s$};
\end{tikzpicture}=
\sum_{t=0}^{\min(a,b)}
(-1)^t
\begin{tikzpicture}[anchorbase,scale=1]
	\draw[-,thick] (0,0) to (0,1);
	\draw[-,thick] (0.015,0) to (0.015,.2) to (.57,.4) to (.57,.6)
        to (.015,.8) to (.015,1);
	\draw[-,line width=1.2pt] (0.6,0) to (0.6,1);
        \node at (0.6,-.1) {$\scriptstyle b$};
        \node at (0,-.1) {$\scriptstyle a$};
        \node at (-0.1,.5) {$\scriptstyle t$};
\end{tikzpicture},
$$
as required.

\vspace{2mm}
\noindent
{\em Proof of (\ref{serre}).}
This is explained in the proof of \cite[Lemma 2.2.1]{CKM} (and
actually plays no role in this article).

\vspace{2mm}
\noindent
{\em Proof of (\ref{swallows}).}
By reflection, we just need to prove the first equality, and moreover
we may assume that $a \geq b$.
Replacing the crossing with (\ref{askthink}) then using (\ref{assrel})--(\ref{trivial}) as usual, we have that
$$
\begin{tikzpicture}[anchorbase,scale=.7]
	\draw[-,line width=2pt] (0.08,.3) to (0.08,.5);
\draw[-,thick] (-.2,-.8) to [out=45,in=-45] (0.1,.31);
\draw[-,thick] (.36,-.8) to [out=135,in=-135] (0.06,.31);
        \node at (-.3,-.95) {$\scriptstyle a$};
        \node at (.45,-.95) {$\scriptstyle b$};
\end{tikzpicture}
=
\sum_{s=0}^{b} (-1)^s
\begin{tikzpicture}[anchorbase,scale=.7]
	\draw[-,line width=1.8pt] (0.08,.1) to (0.08,.5);
\draw[-,thick] (.47,-.8) to [out=100,in=-45] (0.09,.115);
\draw[-,thin] (-.29,-.8) to [out=80,in=-135] (0.05,.115);
\draw[-,thick] (-.275,-.8) to (-.26,-.7) to (.425,-.6) to (.39,-.45) to (-.18,-.3) to [out=70,in=-135] (0.07,.115);
        \node at (-.3,-.95) {$\scriptstyle a$};
        \node at (.43,-.95) {$\scriptstyle b$};
        \node at (-.26,-.02) {$\scriptstyle b$};
        \node at (.41,-.06) {$\scriptstyle a$};
        \node at (-.4,-.45) {$\scriptstyle s$};
\end{tikzpicture}
=
\left(\sum_{s=0}^{b}(-1)^s \binom{a}{s} \binom{a+b-s}{a}
\right)\begin{tikzpicture}[anchorbase,scale=.7]
	\draw[-,line width=2pt] (0.08,.1) to (0.08,.5);
\draw[-,thick] (.46,-.8) to [out=100,in=-45] (0.1,.11);
\draw[-,thick] (-.3,-.8) to [out=80,in=-135] (0.06,.11);
        \node at (-.3,-.95) {$\scriptstyle a$};
        \node at (.43,-.95) {$\scriptstyle b$};
\end{tikzpicture}.
$$
It remains to observe that the coefficient here equals $1$. 
This follows by Lemma~\ref{A}, taking $m:=a$ and $n:=b$.

\vspace{2mm}
\noindent
{\em Proof of (\ref{sliders}).}
Note the four identities are all equivalent upon reflection, so we
just prove the first one:
$$
\begin{tikzpicture}[anchorbase,scale=0.7]
	\draw[-,thick] (0.4,0) to (-0.6,1);
	\draw[-,thick] (0.08,0) to (0.08,1);
	\draw[-,thick] (0.1,0) to (0.1,.6) to (.5,1);
        \node at (0.6,1.13) {$\scriptstyle c$};
        \node at (0.1,1.16) {$\scriptstyle b$};
        \node at (-0.65,1.13) {$\scriptstyle a$};
\end{tikzpicture}
=
\begin{tikzpicture}[anchorbase,scale=0.7]
	\draw[-,thick] (0.7,0) to (-0.3,1);
	\draw[-,thick] (0.08,0) to (0.08,1);
	\draw[-,thick] (0.1,0) to (0.1,.2) to (.9,1);
        \node at (0.9,1.13) {$\scriptstyle c$};
        \node at (0.1,1.16) {$\scriptstyle b$};
        \node at (-0.4,1.13) {$\scriptstyle a$};
\end{tikzpicture}.
$$
We proceed by induction on $a+b+c$. The base
case is when $a=0$, which is trivial. For the induction step, notice
that the diagram on the right hand side 
is a reduced chicken foot diagram. The idea is to expand the
left hand side in terms of reduced chicken foot diagrams too, then the
equality will be apparent. First we rewrite the crossing at the bottom
of this diagram using (\ref{thickcrossing}):
$$
\begin{tikzpicture}[baseline=3mm,scale=0.8]
	\draw[-,thick] (0.4,0) to (-0.6,1);
	\draw[-,thick] (0.08,0) to (0.08,1);
	\draw[-,thick] (0.1,0) to (0.1,.6) to (.5,1);
        \node at (0.6,1.13) {$\scriptstyle c$};
        \node at (0.1,1.16) {$\scriptstyle b$};
        \node at (-0.65,1.13) {$\scriptstyle a$};
\end{tikzpicture}
 =
\sum_{s=0}^{\min(a,b+c)} (-1)^s
\begin{tikzpicture}[anchorbase,scale=.8]
	\draw[-,thin] (0.01,0) to (0.01,1);
	\draw[-,thick] (0.02,0) to (0.02,0.2) to (.88,0.4) to (.88,0.6) to
        (0.02,.8) to (0.02,1);
	\draw[-,thick] (0.03,0) to (0.03,0.2) to (.91,0.4) to (.91,1);
	\draw[-,thick] (.92,0) to (.92,0.8) to (1.3,1);
        \node at (0,-.15) {$\scriptstyle b+c$};
        \node at (0,1.15) {$\scriptstyle a$};
        \node at (-.1,.5) {$\scriptstyle s$};
        \node at (.9,1.15) {$\scriptstyle b$};
        \node at (1.4,1.15) {$\scriptstyle c$};
        \node at (.9,-.15) {$\scriptstyle a$};
\end{tikzpicture}
 =
\sum_{s=0}^{\min(a,b+c)} (-1)^s
\begin{tikzpicture}[anchorbase,scale=.8]
	\draw[-,thin] (0.01,0) to (0.01,1);
	\draw[-,thick] (0.02,0) to (0.02,0.2) to (.695,0.3) to (.695,0.7) to
        (0.02,.8) to (0.02,1);
	\draw[-,thick] (0.03,0) to (0.03,0.2) to (.71,0.3) to (.71,1);
	\draw[-,thick] (.73,0) to (.73,0.5) to (1.4,1);
        \node at (0,-.15) {$\scriptstyle b+c$};
        \node at (0,1.15) {$\scriptstyle a$};
        \node at (-.1,.5) {$\scriptstyle s$};
        \node at (.7,1.15) {$\scriptstyle b$};
        \node at (1.4,1.15) {$\scriptstyle c$};
        \node at (.73,-.15) {$\scriptstyle a$};
\end{tikzpicture}.
$$
By (\ref{mergesplit}), we have that
$$
\begin{tikzpicture}[anchorbase,scale=.8]
	\draw[-,line width=1.2pt] (0,0) to (.275,.3) to (.275,.7) to (0,1);
	\draw[-,line width=1.2pt] (.6,0) to (.315,.3) to (.315,.7) to (.6,1);
        \node at (-0.2,1.13) {$\scriptstyle a+b-s$};
        \node at (0.63,1.13) {$\scriptstyle c$};
        \node at (-0.2,-.1) {$\scriptstyle b+c-s$};
        \node at (0.63,-.1) {$\scriptstyle a$};
\end{tikzpicture}
=
\sum_{t=\max(0,s-b)}^{\min(a,c)}
\begin{tikzpicture}[anchorbase,scale=.8]
	\draw[-,thin] (0,0) to (0,1);
	\draw[-,thick] (0.02,0) to (0.02,.2) to (1.08,.8) to (1.08,1);
	\draw[-,thick] (1.08,0) to (1.08,.2) to (.02,.8) to (.02,1);
	\draw[-,thin] (1.1,0) to (1.1,1);
        \node at (-.2,1.13) {$\scriptstyle a+b-s$};
        \node at (1.13,1.13) {$\scriptstyle c$};
        \node at (-.2,-.1) {$\scriptstyle b+c-s$};
        \node at (1.13,-.1) {$\scriptstyle a$};
        \node at (1.23,.5) {$\scriptstyle t$};
        \node at (-.6,.5) {$\scriptstyle b+t-s$};
\end{tikzpicture}.
$$
We substitute this into our formula to obtain
\begin{equation*}
\begin{tikzpicture}[baseline=3mm,scale=0.8]
	\draw[-,thick] (0.4,0) to (-0.6,1);
	\draw[-,thick] (0.08,0) to (0.08,1);
	\draw[-,thick] (0.1,0) to (0.1,.6) to (.5,1);
        \node at (0.6,1.13) {$\scriptstyle c$};
        \node at (0.1,1.16) {$\scriptstyle b$};
        \node at (-0.65,1.13) {$\scriptstyle a$};
\end{tikzpicture}
 =
\sum_{s=0}^{\min(a,b+c)} \sum_{t=\max(0,s-b)}^{\min(a,c)}(-1)^s
\begin{tikzpicture}[anchorbase,scale=.8]
	\draw[-,thin] (0,0) to (0,1);
	\draw[-,thick] (0.01,0) to (0.01,.05) to (.42,.7) to (0.01,.85)
        to (0.01,1);
	\draw[-,thick] (0.03,0) to (0.03,.05) to (.64,1);
	\draw[-,thin] (0.04,0) to (0.04,.05) to (1.4,.96) to (1.4,1);
	\draw[-,thick] (1.05,0)  to (1.05,.05) to (.345,.5) to (.465,.7)
        to (0.03,.85) to (0.03,1);
	\draw[-,thin] (1.07,0) to (1.07,.05) to (1.42,.96) to (1.42,1);
        \node at (0,-.15) {$\scriptstyle b+c$};
        \node at (0,1.15) {$\scriptstyle a$};
        \node at (-.1,.5) {$\scriptstyle s$};
        \node at (1.4,.5) {$\scriptstyle t$};
        \node at (.7,1.18) {$\scriptstyle b$};
        \node at (1.4,1.15) {$\scriptstyle c$};
        \node at (1.05,-.15) {$\scriptstyle a$};
\end{tikzpicture}.
\end{equation*}
By (\ref{mergesplit}) again, we have that
$$
\begin{tikzpicture}[anchorbase,scale=.8]
	\draw[-,line width=1.2pt] (0,0) to (.275,.3) to (.275,.7) to (0,1);
	\draw[-,line width=1.2pt] (.6,0) to (.315,.3) to (.315,.7) to (.6,1);
        \node at (-0.1,1.15) {$\scriptstyle a-s$};
        \node at (0.63,1.15) {$\scriptstyle b$};
        \node at (-0.3,-.1) {$\scriptstyle b+t-s$};
        \node at (0.7,-.1) {$\scriptstyle a-t$};
\end{tikzpicture}
=
\sum_{u=\max(s,t)}^{\min(a,b+t)}
\begin{tikzpicture}[anchorbase,scale=.8]
	\draw[-,thin] (0,0) to (0,1);
	\draw[-,thick] (0.02,0) to (0.02,.2) to (1.08,.8) to (1.08,1);
	\draw[-,thick] (1.08,0) to (1.08,.2) to (.02,.8) to (.02,1);
	\draw[-,thin] (1.1,0) to (1.1,1);
        \node at (-.2,1.15) {$\scriptstyle a-s$};
        \node at (1.13,1.15) {$\scriptstyle b$};
        \node at (-.2,-.1) {$\scriptstyle b+t-s$};
        \node at (1.13,-.1) {$\scriptstyle a-t$};
        \node at (1.43,.5) {$\scriptstyle u-t$};
        \node at (-.4,.5) {$\scriptstyle u-s$};
\end{tikzpicture}.
$$
Using this,  (\ref{assrel})--(\ref{trivial}), 
and the induction hypothesis to pull a two-fold
split past the string of thickness $c-t$,
we simplify further to get
\begin{align*}
\begin{tikzpicture}[baseline=3mm,scale=0.8]
	\draw[-,thick] (0.4,0) to (-0.6,1);
	\draw[-,thick] (0.08,0) to (0.08,1);
	\draw[-,thick] (0.1,0) to (0.1,.6) to (.5,1);
        \node at (0.6,1.13) {$\scriptstyle c$};
        \node at (0.1,1.16) {$\scriptstyle b$};
        \node at (-0.65,1.13) {$\scriptstyle a$};
\end{tikzpicture}
& =
\sum_{s=0}^{\min(a,b+c)} \sum_{t=\max(0,s-b)}^{\min(a,c)}
\sum_{u=\max(s,t)}^{\min(a,b+t)}
(-1)^s \binom{u}{s}
\begin{tikzpicture}[anchorbase,scale=.8]
	\draw[-,thin] (0.34,0) to (.34,.05) to (0,.95) to (0,1);
	\draw[-,thin] (0.35,0) to (.35,.05) to (.695,.95) to (.695,1);
	\draw[-,thin] (1.03,0) to (1.03,.05) to (.705,.95) to (.705,1);
	\draw[-,thin] (0.36,0) to (.36,.05) to (1.39,.95) to (1.39,1);
	\draw[-,thin] (0.01,1) to (.01,.95) to (1.035,.05) to (1.035,0);
	\draw[-,thin] (1.04,0) to (1.04, .05) to (1.4,.95) to (1.4,1);
        \node at (0.35,-.15) {$\scriptstyle b+c$};
        \node at (0,1.15) {$\scriptstyle a$};
        \node at (-.07,.5) {$\scriptstyle u$};
        \node at (1.4,.5) {$\scriptstyle t$};
        \node at (.7,1.18) {$\scriptstyle b$};
        \node at (1.4,1.15) {$\scriptstyle c$};
        \node at (1.05,-.15) {$\scriptstyle a$};
\end{tikzpicture}\\
&=
\sum_{t=0}^{\min(a,c)}
\sum_{u=t}^{\min(a,b+t)}
\sum_{s=0}^u 
(-1)^s \binom{u}{s}
\begin{tikzpicture}[anchorbase,scale=.8]
	\draw[-,thin] (0.34,0) to (.34,.05) to (0,.95) to (0,1);
	\draw[-,thin] (0.35,0) to (.35,.05) to (.695,.95) to (.695,1);
	\draw[-,thin] (1.03,0) to (1.03,.05) to (.705,.95) to (.705,1);
	\draw[-,thin] (0.36,0) to (.36,.05) to (1.39,.95) to (1.39,1);
	\draw[-,thin] (0.01,1) to (.01,.95) to (1.035,.05) to (1.035,0);
	\draw[-,thin] (1.04,0) to (1.04, .05) to (1.4,.95) to (1.4,1);
        \node at (0.35,-.15) {$\scriptstyle b+c$};
        \node at (0,1.15) {$\scriptstyle a$};
        \node at (-.07,.5) {$\scriptstyle u$};
        \node at (1.4,.5) {$\scriptstyle t$};
        \node at (.7,1.18) {$\scriptstyle b$};
        \node at (1.4,1.15) {$\scriptstyle c$};
        \node at (1.05,-.15) {$\scriptstyle a$};
\end{tikzpicture}.
\end{align*}
Since $\sum_{s=0}^{u}
(-1)^s \binom{u}{s} = \delta_{u,0}$,
 which is zero unless $u=0$, when it is $1$, 
the only non-zero term arises when $u=t=0$, and we get 
exactly the right hand side we were after.

\vspace{2mm}
\noindent
{\em Proof of (\ref{symmetric}).}
We proceed by induction on $a+b$, the
case $a+b=1$ being trivial.
For the induction step, we may assume without loss of generality that
$a \leq b$. 
We claim for $0 \leq s < a$ that 
$$
\begin{tikzpicture}[anchorbase,scale=0.8]
  \draw[-,thin] (-.289,-.8) to (-.289,-.29);
  \draw[-,thin] (-.274,-.8) to (-.274,-.65) to (.279,-.29);
  \draw[-,thin] (.274,-.8) to (.274,-.65) to (-.279,-.29);
  \draw[-,thin] (.289,-.8) to (.289,-.29);
	\draw[-,thick] (0.28,-.3) to [out=90,in=-50] (0,.2) to [out=130,in=-90] (-0.28,.6);
	\draw[-,thick] (-0.28,-.3) to [out=90,in=-130] (0,.2) to [out=50,in=-90] (0.28,.6);
        \node at (0.3,-.95) {$\scriptstyle b$};
        \node at (-0.3,-.95) {$\scriptstyle a$};
%        \node at (0.3,.75) {$\scriptstyle b$};
%        \node at (-0.3,.75) {$\scriptstyle a$};
        \node at (-.65,-.45) {$\scriptstyle a-s$};
%       \node at (0.65,-.45) {$\scriptstyle a-s$};
       \node at (0.3,.77) {$\scriptstyle b$};
        \node at (-0.3,.75) {$\scriptstyle a$};
\end{tikzpicture}
=
\begin{tikzpicture}[anchorbase,scale=.8]
	\draw[-,thin] (0,0) to (0,1.4);
	\draw[-,thick] (0.02,0) to (0.02,.3) to (.58,1.1) to (.58,1.4);
	\draw[-,thick] (.58,0) to (.58,.3) to (.02,1.1) to (.02,1.4);
	\draw[-,thin] (.6,0) to (.6,1.4);
        \node at (0,-.1) {$\scriptstyle a$};
        \node at (.58,-.1) {$\scriptstyle b$};
        \node at (-0.16,.7) {$\scriptstyle s$};
       \node at (0.6,1.57) {$\scriptstyle b$};
        \node at (0,1.55) {$\scriptstyle a$};
\end{tikzpicture}.
$$
To see this, one uses (\ref{swallows})--(\ref{sliders}) plus the induction hypothesis to
pull the two-fold merges past the crossing.
Using the claim, (\ref{mergesplit}) and
(\ref{thickcrossing})--(\ref{swallows}),
we deduce that
$$
\mathord{
\begin{tikzpicture}[anchorbase,scale=0.8]
	\draw[-,thick] (0.28,-0.1) to[out=90,in=-90] (-0.28,.6);
	\draw[-,thick] (-0.28,-0.1) to[out=90,in=-90] (0.28,.6);
	\draw[-,thick] (0.28,-.8) to[out=90,in=-90] (-0.28,-0.1);
	\draw[-,thick] (-0.28,-.8) to[out=90,in=-90] (0.28,-0.1);
        \node at (0.3,-.95) {$\scriptstyle b$};
        \node at (-0.3,-.95) {$\scriptstyle a$};
       \node at (0.3,.77) {$\scriptstyle b$};
        \node at (-0.3,.75) {$\scriptstyle a$};
\end{tikzpicture}
}=
\mathord{
\begin{tikzpicture}[anchorbase,scale=0.8]
	\draw[-,thick] (0.28,0) to[out=90,in=-90] (-0.28,.6);
	\draw[-,thick] (-0.28,0) to[out=90,in=-90] (0.28,.6);
	\draw[-,thick] (-.3,-.8) to (-.01,-.5) to (-0.01,-.3) to[out=135,in=-90] (-0.28,0);
	\draw[-,thick] (0.3,-.8) to (.01,-.5) to (0.01,-.3) to[out=45,in=-90] (0.28,0);
        \node at (0.3,-.95) {$\scriptstyle b$};
        \node at (-0.3,-.95) {$\scriptstyle a$};
       \node at (0.3,.77) {$\scriptstyle b$};
        \node at (-0.3,.75) {$\scriptstyle a$};
\end{tikzpicture}
}
\!\!-\sum_{t=1}^{a}
\begin{tikzpicture}[anchorbase,scale=0.8]
  \draw[-,thin] (-.289,-.8) to (-.289,-.29);
  \draw[-,thin] (-.274,-.8) to (-.274,-.65) to (.279,-.29);
  \draw[-,thin] (.274,-.8) to (.274,-.65) to (-.279,-.29);
  \draw[-,thin] (.289,-.8) to (.289,-.29);
	\draw[-,thick] (0.28,-.3) to [out=90,in=-50] (0,.2) to [out=130,in=-90] (-0.28,.6);
	\draw[-,thick] (-0.28,-.3) to [out=90,in=-130] (0,.2) to [out=50,in=-90] (0.28,.6);
        \node at (0.3,-.95) {$\scriptstyle b$};
        \node at (-0.3,-.95) {$\scriptstyle a$};
%        \node at (0.3,.75) {$\scriptstyle b$};
%        \node at (-0.3,.75) {$\scriptstyle a$};
       \node at (0.3,.77) {$\scriptstyle b$};
        \node at (-0.3,.75) {$\scriptstyle a$};
        \node at (-0.43,-.45) {$\scriptstyle t$};
 %       \node at (0.43,-.45) {$\scriptstyle t$};
\end{tikzpicture}
=
\mathord{
\begin{tikzpicture}[anchorbase,scale=0.8]
	\draw[-,thick] (-.3,-.8) to (-.01,-.3) to (-0.01,-.3) to (0.01,.1) to (-.3,.6);
	\draw[-,thick] (0.3,-.8) to (.01,-.3) to (0.01,-.3) to
        (0.01,.1) to (.3,.6);
        \node at (0.3,-.95) {$\scriptstyle b$};
        \node at (-0.3,-.95) {$\scriptstyle a$};
       \node at (0.3,.77) {$\scriptstyle b$};
        \node at (-0.3,.75) {$\scriptstyle a$};
\end{tikzpicture}
}
\!\!-\sum_{s=0}^{a-1}
\begin{tikzpicture}[anchorbase,scale=0.8]
  \draw[-,thin] (-.289,-.8) to (-.289,-.29);
  \draw[-,thin] (-.274,-.8) to (-.274,-.65) to (.279,-.29);
  \draw[-,thin] (.274,-.8) to (.274,-.65) to (-.279,-.29);
  \draw[-,thin] (.289,-.8) to (.289,-.29);
	\draw[-,thick] (0.28,-.3) to [out=90,in=-50] (0,.2) to [out=130,in=-90] (-0.28,.6);
	\draw[-,thick] (-0.28,-.3) to [out=90,in=-130] (0,.2) to [out=50,in=-90] (0.28,.6);
        \node at (0.3,-.95) {$\scriptstyle b$};
        \node at (-0.3,-.95) {$\scriptstyle a$};
%        \node at (0.3,.75) {$\scriptstyle b$};
%        \node at (-0.3,.75) {$\scriptstyle a$};
       \node at (-.65,-.45) {$\scriptstyle a-s$};
%       \node at (0.65,-.45) {$\scriptstyle a-s$};
       \node at (0.3,.77) {$\scriptstyle b$};
        \node at (-0.3,.75) {$\scriptstyle a$};
\end{tikzpicture}=
\sum_{s=0}^{a}
\begin{tikzpicture}[anchorbase,scale=.8]
	\draw[-,thin] (0,0) to (0,1.4);
	\draw[-,thick] (0.02,0) to (0.02,.3) to (.58,1.1) to (.58,1.4);
	\draw[-,thick] (.58,0) to (.58,.3) to (.02,1.1) to (.02,1.4);
	\draw[-,thin] (.6,0) to (.6,1.4);
        \node at (0,-.1) {$\scriptstyle a$};
        \node at (.58,-.1) {$\scriptstyle b$};
        \node at (-0.16,.7) {$\scriptstyle s$};
       \node at (0.6,1.57) {$\scriptstyle b$};
        \node at (0,1.55) {$\scriptstyle a$};
\end{tikzpicture}
-\!\!\sum_{s=0}^{a-1}
\begin{tikzpicture}[anchorbase,scale=0.8]
  \draw[-,thin] (-.289,-.8) to (-.289,-.29);
  \draw[-,thin] (-.274,-.8) to (-.274,-.65) to (.279,-.29);
  \draw[-,thin] (.274,-.8) to (.274,-.65) to (-.279,-.29);
  \draw[-,thin] (.289,-.8) to (.289,-.29);
	\draw[-,thick] (0.28,-.3) to [out=90,in=-50] (0,.2) to [out=130,in=-90] (-0.28,.6);
	\draw[-,thick] (-0.28,-.3) to [out=90,in=-130] (0,.2) to [out=50,in=-90] (0.28,.6);
        \node at (0.3,-.95) {$\scriptstyle b$};
        \node at (-0.3,-.95) {$\scriptstyle a$};
%        \node at (0.3,.75) {$\scriptstyle b$};
%        \node at (-0.3,.75) {$\scriptstyle a$};
        \node at (-.65,-.45) {$\scriptstyle a-s$};
%       \node at (0.65,-.45) {$\scriptstyle a-s$};
       \node at (0.3,.77) {$\scriptstyle b$};
        \node at (-0.3,.75) {$\scriptstyle a$};
\end{tikzpicture}=
\begin{tikzpicture}[anchorbase,scale=.8]
	\draw[-,thick] (0,0) to (0,1.4);
	\draw[-,thick] (.6,0) to (.6,1.4);
        \node at (0,-.1) {$\scriptstyle a$};
        \node at (.58,-.1) {$\scriptstyle b$};
       \node at (0.6,1.57) {$\scriptstyle b$};
        \node at (0,1.55) {$\scriptstyle a$};
\end{tikzpicture}.
$$

\vspace{2mm}
\noindent
{\em Proof of (\ref{braid}).}
Replace the crossing of the
strings of thickness $a,b$ on both sides 
with (\ref{askthink}). Then use (\ref{sliders})--(\ref{symmetric})
to pull the string of thickness 
$c$ past this expansion of the crossing.

\end{document}